\documentclass[british]{amsart}

\usepackage{mathptmx}

\usepackage[T1]{fontenc}
\usepackage{textcomp}
\usepackage{xcolor}
\usepackage{babel}
\usepackage{amstext}
\usepackage{amsthm}
\usepackage{amssymb}
\usepackage{tikz-cd}
\usepackage{mathrsfs} 
\usepackage[pdfusetitle,
bookmarks=true,bookmarksnumbered=false,bookmarksopen=false,
breaklinks=false,pdfborder={0 0 1},backref=false,colorlinks=false]
{hyperref}
\usepackage{comment}
\makeatletter
\numberwithin{equation}{section}
\numberwithin{figure}{section}
\theoremstyle{plain}
\newtheorem{thm}{\protect\theoremname}[section]
\theoremstyle{plain}
\newtheorem{cor}[thm]{\protect\corollaryname}
\theoremstyle{definition}
\newtheorem{defn}[thm]{\protect\definitionname}
\theoremstyle{plain}
\newtheorem{lem}[thm]{\protect\lemmaname}
\theoremstyle{definition}
\newtheorem{example}[thm]{\protect\examplename}
\theoremstyle{remark}

\newtheorem{rem}[thm]{\protect\remarkname}
\theoremstyle{plain}
\newtheorem{prop}[thm]{\protect\propositionname}
\newtheoremstyle{claimstyle}
{0.8\baselineskip} 
{0.8\baselineskip} 
{\itshape}         
{}                 
{\itshape}         
{.}                
{.5em}             
{}                 

\theoremstyle{claimstyle}

\newtheorem*{claim*}{Claim}

\theoremstyle{plain}

\theoremstyle{plain}
\theoremstyle{plain}

\addto\captionsbritish{\renewcommand{\conjecturename}{Conjecture}}
\addto\captionsenglish{\renewcommand{\conjecturename}{Conjecture}}
\providecommand{\conjecturename}{Conjecture}

\@ifundefined{date}{}{\date{}}

\makeatother

\addto\captionsbritish{\renewcommand{\corollaryname}{Corollary}}
\addto\captionsbritish{\renewcommand{\definitionname}{Definition}}
\addto\captionsbritish{\renewcommand{\examplename}{Example}}
\addto\captionsbritish{\renewcommand{\lemmaname}{Lemma}}
\addto\captionsbritish{\renewcommand{\propositionname}{Proposition}}
\addto\captionsbritish{\renewcommand{\remarkname}{Remark}}
\addto\captionsbritish{\renewcommand{\theoremname}{Theorem}}
\addto\captionsenglish{\renewcommand{\corollaryname}{Corollary}}
\addto\captionsenglish{\renewcommand{\definitionname}{Definition}}
\addto\captionsenglish{\renewcommand{\examplename}{Example}}
\addto\captionsenglish{\renewcommand{\lemmaname}{Lemma}}
\addto\captionsenglish{\renewcommand{\propositionname}{Proposition}}
\addto\captionsenglish{\renewcommand{\remarkname}{Remark}}
\addto\captionsenglish{\renewcommand{\theoremname}{Theorem}}
\providecommand{\corollaryname}{Corollary}
\providecommand{\definitionname}{Definition}
\providecommand{\examplename}{Example}
\providecommand{\lemmaname}{Lemma}
\providecommand{\propositionname}{Proposition}
\providecommand{\remarkname}{Remark}
\providecommand{\theoremname}{Theorem}

\newcommand{\Fix}{\operatorname{Fix}}

\newcommand{\calF}{\mathcal{F}}

\newcommand{\calP}{\mathcal{P}}
\newcommand{\calJ}{\mathcal{J}}

\newcommand{\calL}{\mathcal{L}}

\newcommand{\calD}{\mathcal{D}}

\newcommand{\calO}{\mathcal{O}}
\newcommand{\calS}{\mathcal{S}}
\newcommand{\calB}{\mathcal{B}}
\newcommand{\calG}{\mathcal{G}}

\newcommand{\bd}{\partial_\infty}
\newcommand{\cat}{\mathrm{CAT}}
\newcommand{\Ch}{\operatorname{Ch}}

\newcommand{\chinf}{\operatorname{ch}(X^\infty)}

\newcommand{\bdinf}{X^\infty}
\newcommand{\Aut}{\operatorname{Aut}}

\newcommand{\iso}{\operatorname{Isom}}

\newcommand{\SL}{\operatorname{SL}}

\newcommand{\res}{\operatorname{Res}}

\newcommand{\Opp}{\operatorname{Opp}}

\newcommand{\Sp}{\operatorname{Sp}}

\newcommand{\newcomment}[4]{%
	\newcounter{#2counter}
	\expandafter\newcommand\csname #1\endcsname[1]{%
		\refstepcounter{#2counter}%
		{\color{#4}(#3\arabic{#2counter})}\marginpar{\scriptsize\raggedright\textbf{\color{#4}(#2 \arabic{#2counter}):} ##1}%
}}

\definecolor{copperrose}{rgb}{0.6, 0.4, 0.4}

\global\long\def\explain#1#2{\underset{\underset{\mathclap{#2}}{\downarrow}}{#1}}%

\newcomment{clb}{Corentin}{c}{teal}
\newcomment{iv}{Itamar}{i}{orange}
\newcomment{shl}{Shivam}{s}{purple}

\begin{document}
	\selectlanguage{british}%
	\global\long\def\res{\!\restriction}%
	
	\global\long\def\actson{\curvearrowright}%
	
	\global\long\def\Ad{\text{Ad}}%
	
	\global\long\def\Prob#1{\text{Prob}\left(#1\right)}%
	
	\global\long\def\Ch#1{\text{Ch}\left(#1\right)}%
	
	\global\long\def\Tr#1{\text{Tr}\left(#1\right)}%
	
	\global\long\def\Trleq#1{\text{Tr}_{\leq1}\left(#1\right)}%
	
	\global\long\def\normalizer#1#2{\text{N}_{#1}\left(#2\right)}%
	
	\global\long\def\FC#1{\text{FC}\left(#1\right)}%
	
	\global\long\def\Sub#1{\text{Sub}\left(#1\right)}%
	
	\global\long\def\conn#1{#1^{0}}%
	
	\global\long\def\explain#1#2{\underset{\underset{\mathclap{#2}}{\downarrow}}{#1}}%
	
	\global\long\def\Ind#1#2{\text{Ind}_{#2}^{#1}}%
	
	\global\long\def\Res#1#2{\text{Res}_{#2}^{#1}}%
	
	\global\long\def\N{\mathbb{N}}%
	\global\long\def\Z{\mathbb{Z}}%
	\global\long\def\Q{\mathbb{Q}}%
	\global\long\def\R{\mathbb{R}}%
	\global\long\def\C{\mathbb{C}}%
	\global\long\def\H{\mathbb{H}}%
	\global\long\def\T{\mathbb{T}}%
	\global\long\def\P{\mathbb{P}}%
	\global\long\def\K{\mathbb{K}}%
	
	\global\long\def\bG{\mathbf{G}}%
	\global\long\def\bP{\mathbf{P}}%
	\global\long\def\bI{\: \mathbf{I} \: }%
	\global\long\def\nbI{\: {\not\mathbf{I}} \: }%
	\global\long\def\BH{\mathcal{B}(\mathcal{H})}%
	\global\long\def\cG{\mathcal{G}}%
	
	\global\long\def\sl#1{\mathrm{SL}(#1)}%
	
	\global\long\def\slz{SL_{n}\left(\mathbb{Z}\right)}%
	\global\long\def\slr{SL_{n}\left(\mathbb{R}\right)}%
	
	\global\long\def\conn#1{#1^{0}}%

	\title[selflessness, mif and opposition in groups acting on exotic buildings]{selflessness, mif and opposition in groups acting \\ 
    on exotic buildings}
	
	\begin{abstract}
		We prove that groups acting freely and cocompactly on (possibly exotic) affine buildings of type $\tilde{A}_2$ and $\tilde C_2$ are mixed-identity free and have selfless reduced $C^*$-algebras. These results follow from a strong form of ping-pong dynamics  that we call 'transversal contractivity'.  Our main geometric result regards domesticity properties of elements in the associated polygons at infinity: we prove that, in our context, the opposite geometry of a hyperbolic element is topologically large.
	\end{abstract}

	\author{Corentin Le Bars, Elyasheev Leibtag, Itamar Vigdorovich}
	\thanks{C.LB.\ is supported by ERC Advanced Grant NET 101141693. 
		E.L.\ is supported by the FWO and the F.R.S. FNRS under the Excellence of Science (EOS) program (project ID 40007542).
		I.V.\ was supported by NSF postdoctoral fellowship grant DMS-2402368.}
	\maketitle
	\setcounter{tocdepth}{1}
	
	\section{Introduction}
	
	Let \(X\) be a thick locally finite affine building and let
	\(G=\operatorname{Isom}(X)\).  The classical examples arise from
	semisimple algebraic groups over local fields, but in dimension \(2\) there
	are many affine buildings which are not of Bruhat--Tits type
	\cite{ronan86}.  These are called \emph{exotic} buildings.  Groups acting
	geometrically on exotic buildings are often non-linear  \cite{bader_caprace_lecureux19}, but they retain higher-rank features such as
	property~(T), quasi-isometric rigidity and normal subgroup theorems
	\cite{KleinerLeeb1997,Oppenheim2025,lecureux_witzel26}.

 In this paper, we establish several new results for groups acting on rank \(2\) buildings.  To avoid unnecessary technicalities, we state our main results first for cocompact lattices. In Section~1.4, we describe the precise class of groups to which our results apply.

    \subsection{Mixed identities, and the  permutational Boone--Higman conjecture}

Let \(\Gamma\) be a group and fix a generator  $x$  for the group of integers $\Z$.  A
	 word \(w=w(x)\in\Gamma*\langle x\rangle\) is viewed as a  formula  in the variable $x$ with coefficients in $G$. $w$ is called  a \emph{mixed identity} of $\Gamma$
	if \(w(\gamma)=e\) for every \(\gamma\in\Gamma\). For example,  $z\in \Gamma$ is in the center of $\Gamma$ if and only if $w(x)=zxz^{-1}x^{-1}$ is a mixed identities.   A group with no mixed
	identity (other than the trivial element of $\Gamma * \Z$) is called \emph{mixed identity free}, or \emph{MIF}.
	
	\begin{thm}\label{Thm:MIF}
		\label{thm:intro-building-MIF}
		Let \(X\) be a locally finite affine building, and let 
		\(\Gamma<\operatorname{Isom}(X)\) be a type-preserving cocompact lattice. Assume that either $X$ if of type \(\widetilde A_2\), or that $X$ is of type \(\widetilde C_2\) and the action $\Gamma\actson X$ is free. Then $\Gamma$ is MIF.
	\end{thm}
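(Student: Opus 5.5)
The plan is to reduce MIF to a ping-pong type dynamical statement for the action of $\Gamma$ on the spherical building at infinity $\partial_\infty X$, more precisely on the space $\operatorname{Ch}(X^\infty)$ of chambers at infinity, which is a compact totally disconnected space. Write a nontrivial reduced word as $w(x)=g_0 x^{\epsilon_1} g_1 x^{\epsilon_2}\cdots x^{\epsilon_n} g_n$ with $g_i\in\Gamma$, $\epsilon_i=\pm1$, and $g_i\neq e$ whenever $\epsilon_i=-\epsilon_{i+1}$. After conjugating we may assume $g_0=e$ and $g_n\neq e$, or that $w$ is cyclically reduced. We then look for $\gamma\in\Gamma$ with $w(\gamma)\neq e$. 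We will take $\gamma$ to be a high power $h^N$ of a suitably generic hyperbolic (regular) element $h\in\Gamma$. Such an element exists because $\Gamma$ is a cocompact lattice, so it contains regular hyperbolic elements. This $h$ has an attracting chamber $\xi_+$ and a repelling chamber $\xi_-$ at infinity, and we need $h$ in general position with respect to the finitely many coefficients $g_i$.

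The key dynamical input is a form of ``transversal contractivity''. For a regular hyperbolic $h$, the powers $h^{N}$ contract the set $\operatorname{Opp}(\xi_-)$ of chambers opposite $\xi_-$ onto a small neighbourhood of $\xi_+$, and symmetrically for $h^{-N}$. Ping-pong works if we can choose open sets $U_+\ni\xi_+$ and $U_-\ni\xi_-$ and a test chamber $c$ such that:
\begin{itemize}
\item each $g_i$ (for the relevant indices) maps $U_\pm$ into the region of chambers opposite to both $\xi_+$ and $\xi_-$;
\item $h^{\pm N}$ maps that region into $U_\pm$.
\end{itemize}
Then tracking $c$ through the word shows $w(h^N)c\neq c$. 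Since a single $h$ may fail these conditions for a given $g_i$ (for instance when $g_i$ fixes $\xi_\pm$), we replace $h$ by a conjugate $khk^{-1}$ chosen so that $k\xi_\pm$ avoids the finitely many bad configurations. This requires knowing that for each nontrivial $g$ the set of chambers $\eta$ with $g\eta$ opposite $\eta$ (or opposite a given nearby chamber) is large, namely open and dense, or at least nonempty open.

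The main obstacle is exactly this geometric statement: showing that the opposite geometry of a nontrivial element of $\Gamma$, and in particular of a hyperbolic element, is topologically large in $\operatorname{Ch}(X^\infty)$. Equivalently, the element is far from domestic in the polygon at infinity. For generalized triangles (type $\widetilde A_2$) and quadrangles (type $\widetilde C_2$) that need not be Moufang, no algebraic structure is available. I would argue combinatorially in the generalized polygon. If the set of chambers mapped to opposite chambers had empty interior near $\xi_\pm$, then by local finiteness and the tree-like structure of residues (panels at infinity are boundaries of trees) one could find a wall-tree on which $g$ acts with bounded displacement, contradicting hyperbolicity or cocompactness. Elements fixing points at infinity (elliptic or non-regular elements) would be handled separately using Baire category over the countably many elements of $\Gamma$.

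The freeness hypothesis in the $\widetilde C_2$ case is where I expect torsion to cause trouble. Finite-order elements can be domestic in quadrangles, for example polarity-like involutions, so freeness is needed to exclude them. In the $\widetilde A_2$ case, the analogous domestic elements of triangles do not act type-preservingly, so no such restriction is needed. Once the largeness of the opposite geometry is established, a standard Baire argument gives a single $k$ that works simultaneously for all $g_i$, and ping-pong finishes the proof.
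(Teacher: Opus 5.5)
The genuine gap is the step you flag yourself as the main obstacle. For every non-trivial coefficient $f\in\Gamma$ you need the opposite geometry $\calO(f)=\{C\in\calF\mid fC \text{ is opposite } C\}$ to be open and dense, so that $\bigcap_{f\in F}\calO(f)\neq\emptyset$. This is the real content of the theorem, and your sketch does not establish it.

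Bounded displacement on a wall tree contradicts neither hyperbolicity nor cocompactness: a singular hyperbolic element translating parallel to a wall acts elliptically on the corresponding panel tree. Nothing in your sketch links an interior point of the domestic locus $\calD(f)$ to such a tree. A Baire argument over the countably many elements of $\Gamma$ only intersects the sets $\calO(f)$ once each is already known to be open and dense; for finite $F$ not even Baire is needed. It says nothing about an individual element. The elements you set aside are exactly the hard ones: elliptic torsion elements for $\widetilde A_2$, and singular hyperbolic elements for $\widetilde C_2$.

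The paper's mechanism is different.
\begin{itemize}
\item For $\widetilde A_2$: if $gp\neq p$, the only flags $(p,L)$ not sent to an opposite flag have $L\in\{p\vee gp,\,p\vee g^{-1}p\}$. Hence $\calD(g)$ has empty interior as soon as $\calP^g$ does. This follows from topological freeness of $\Gamma\curvearrowright\calF$, which comes from properness and general type and covers torsion elements, together with the passage from fixed chambers to fixed points.
\item For $\widetilde C_2$: a girth-$8$ argument shows that the interior of the set $N_{\calL}(g)$ of lines concurrent with their image lies in $(\calP^g)^{\bI}$, and dually for points. For hyperbolic $g$, the splitting $\partial\operatorname{Min}(g)=\partial Y*\{g^-,g^+\}$ forces, in the singular case, $\calP^g\subseteq\{a^-,a^+\}\cup Q(a^-,a^+)$ and $\calL^g\subseteq(a^-)^{\bI}\cup(a^+)^{\bI}$, where $Q(a^-,a^+)=(a^-)^\perp\cap(a^+)^\perp$. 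Their incidence images have empty interior via Kramer's open bundle maps.
\end{itemize}
This is exactly where freeness (every non-trivial element is hyperbolic) is used. Polarities, being dualities, cannot occur in a type-preserving lattice and are not the issue.

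Two further points need repair.
\begin{itemize}
\item The choice of the conjugator $k$ is not a Baire argument. $\Gamma$ is countable and discrete, and the orbit $\Gamma\cdot(\xi_+,\xi_-)$ has no reason to be dense among opposite pairs. What works is a dynamical input. For example, take a sequence $s_n\in\Gamma$ with $s_nC\to C_0$ for every chamber $C$. Then both endpoints $rs_n\xi_\pm$ of $(rs_n)h(rs_n)^{-1}$ converge to $rC_0$. By minimality of $\Gamma$ on $\calF$, choose $r$ with $rC_0\in\bigcap_{f\in F}\calO(f)$. Openness of opposition then gives $f(rs_n\xi_\sigma)$ opposite $rs_n\xi_\tau$ for all $f\in F$, all signs, and $n$ large. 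The paper instead uses Schottky products to place the two endpoints in prescribed opposite open sets.
\item No fixed power $h^N$ maps the open set $\calO(\xi_+)\cap\calO(\xi_-)$ into $U_+$. Convergence to $\xi_+$ is only uniform on compact subsets of $\calO(\xi_-)$. You should ping-pong with the compact set $K=\{c\}\cup\bigcup_{f\in F}f(\overline{U_+}\cup\overline{U_-})$ and choose $N$ after $U_\pm$.
\end{itemize}
With these repairs, your local ping-pong on the big cells is a valid route to MIF, different from the paper's transversal contractivity. The paper's blocks $\calB(C_h^\pm)$ are needed only to obtain the global contraction $\gamma(\mathcal S\setminus V^-)\subseteq V^+$ used for selflessness.
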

    The additional assumption in type \(\widetilde C_2\) cannot be omitted, as can already be seen in the classical case of the symplectic group $\Sp_4$, see \cite{Tomanov1985,AvniGelander2025}.\\

   Theorem~\ref{Thm:MIF} is of particular interest in light of the \emph{permutational Boone--Higman conjecture} \cite{BFFHZ26}. Belk, Fournier-Facio, Hyde, and Zaremsky proved that every finitely presented simple MIF group satisfies the conjecture. They note that most known classes of finitely presented simple groups are MIF, while specifically pointing out the simple exotic lattices of Titz Mite--Witzel \cite{TitzMiteWitzel2025} as a class for which MIF was not yet known. Theorem~\ref{Thm:MIF} establishes MIF for these \(\widetilde C_2\)-lattices and hence verifies the permutational Boone--Higman conjecture for them.

     We expect that a quantitative version of Theorem~\ref{Thm:MIF} should also hold, despite the fact that the standard methods available in the linear setting do not apply here. Quantitative aspects of MIF have attracted considerable attention in recent years; see \cite{Bradford2024,BradfordSchneiderThom2024,vigdorovich25,AvniGelander2025,BradfordSisto2026}. For further results on the MIF property  see \cite{Tomanov1985,HullOsin2016,Jacobson2021,EtedadialiabadiGaoLeMaitreMelleray2021,BodirskySchneiderThom2025,HydeLodha2025,Rybak2026}.

    \subsection{Selfless reduced $C^*$-algebras.}
    The next property we discuss regards $C^*_r(\Gamma)$, the reduced $C^*$-algebra of the group $\Gamma$. Recall that $C^*_r(\Gamma)$ is defined as the norm-closure of $\C[\Gamma]$ embedded inside $\mathcal{B}(\ell^2(\Gamma))$ via the left-regular representation. 
    
   The notion of a selfless \(C^*\)-algebra was introduced by Robert \cite{`2023selfless}. In the case of a reduced group $C^*$-algebra $C^*_r(\Gamma)$, selflessness means that the natural embedding $C^*_r(\Gamma)\hookrightarrow C^*_r(\Gamma*\Z)$ is existential in the sense of (continuous) model theory. Interestingly, $\Gamma$  is MIF precisely when the embedding $\Gamma\hookrightarrow \Gamma*\Z $ is existential. Thus selflessness can  be viewed as a \(C^*\)-algebraic analogue of the MIF property for groups, as first discovered in \cite{amrutam_gao_kunnawalkam-elayavalli_patchell25}. Although there are some concrete implications connecting the two notions, this relationship remains largely an analogy. 
   
   Selflessness implies several of the most important regularity properties in the structure theory of \(C^*\)-algebras. The property attracted considerable attention following the breakthrough work \cite{amrutam_gao_kunnawalkam-elayavalli_patchell25}, which established strict comparison for \(C_r^*(\mathbb F_n)\) via selflessness. Notable classes of groups whose reduced \(C^*\)-algebras are selfless include linear groups with trivial amenable radical \cite{vigdorovich25,Vigdorovich2026Linear}, acylindrically hyperbolic groups with trivial finite radical \cite{amrutam_gao_kunnawalkam-elayavalli_patchell25,Ozawa2025,Yang2025}, and several further classes \cite{KunnawalkamPatchellTeryoshin2025,GaoKunnawalkamPatchellTeryoshin2026,BasuFlores2026,ArzhantsevaFinnSell2026,flores2025selfless}.

	\begin{thm}
		\label{thm:intro-selflessness}
		Let \(X\) be a locally finite affine building, and let 
		\(\Gamma<\operatorname{Isom}(X)\) be a type-preserving cocompact lattice. Assume that either $X$ if of type \(\widetilde A_2\), or that $X$ is of type \(\widetilde C_2\) and the action $\Gamma\actson X$ is free. Then $C^*_r(\Gamma)$ is selfless.
	\end{thm}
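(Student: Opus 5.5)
Our plan is to reduce selflessness of \(C^*_r(\Gamma)\) to a dynamical criterion for the action of \(\Gamma\) on a compact boundary. We will use the ping-pong type criterion from \cite{amrutam_gao_kunnawalkam-elayavalli_patchell25,Ozawa2025,vigdorovich25}. In the form we use it, it says the following. Suppose that for every finite set \(F\subset\Gamma\setminus\{e\}\) one can find elements \(g\in\Gamma\) whose conjugates and powers, paired with the elements of \(F\), play ping-pong in a strong, uniform sense; this is the \emph{transversal contractivity} mentioned in the abstract. Then \(\Gamma\hookrightarrow\Gamma*\Z\) induces an existential embedding \(C^*_r(\Gamma)\hookrightarrow C^*_r(\Gamma*\Z)\). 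The group-theoretic analogue of this same input gives Theorem~\ref{Thm:MIF}. Concretely, we want a compact \(\Gamma\)-space \(Z\) and hyperbolic elements \(g\) with attracting and repelling sets \(Z_g^\pm\) satisfying two conditions:
\begin{itemize}
\item[(a)] \(g^n\) contracts the complement of a neighbourhood of \(Z_g^-\) into a small neighbourhood of \(Z_g^+\);
\item[(b)] for each \(f\in F\), the translates \(fZ_g^\pm\) are disjoint from (``transverse to'') \(Z_g^\mp\).
\end{itemize}
Together with Powers-type averaging in \(\ell^2\), these give the norm estimates that selflessness requires.

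We take \(Z\) to be the space of chambers of the spherical building at infinity \(\partial_\infty X\), a generalized \(3\)-gon or \(4\)-gon, with its cone topology. First, using cocompactness and the Weyl chamber flow, we produce regular hyperbolic elements \(g\in\Gamma\) acting by translation along an axis in the interior of a Weyl chamber. Such an element fixes two opposite chambers \(c_\pm\) and contracts every chamber opposite \(c_-\) towards \(c_+\). Regular hyperbolics are dense enough, for instance via a Benoist-type argument or the Anosov closing lemma in \(\Gamma\backslash X\), that we can choose them with endpoints in general position with respect to any finite set of group elements.

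Second, we establish transversality: for each \(f\neq e\) in \(F\), some conjugate \(hgh^{-1}\) must have endpoints \(c_\pm\) with \(fc_\pm\) opposite \(c_\mp\). This is the geometric heart of the argument. It requires that for a nontrivial \(f\), the set of chambers \(c\) with \(fc\) opposite \(c\), the opposite geometry \(\Opp(f)\), be large, for instance with nonempty interior or comeagre. This is the main theorem on domesticity announced in the abstract, and we take it as input. Here the hypotheses enter. In type \(\widetilde A_2\), a non-domestic element exists automatically. In type \(\widetilde C_2\), an involution-like element could map every chamber to a non-opposite one; this is the \(\Sp_4\) obstruction. Freeness of the action rules this out, because any such \(f\) would have to fix a point of \(X\). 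Since \(\Gamma\) acts minimally on chambers at infinity, which follows from cocompactness and the density of endpoints of hyperbolics, an open set of chambers in \(\Opp(f)\) contains endpoints of some conjugate of \(g\). Intersecting finitely many open dense conditions handles all of \(F\) simultaneously.

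Third, we feed this into the criterion. From the dynamics we build the contraction estimates: for a suitable power \(N\), the operators \(\lambda(g^N f g^{-N})\) are ``almost free'' from \(\lambda(F)\) in the sense of Ozawa's ping-pong norm lemma. This lets us realize, within \(C^*_r(\Gamma)\), approximate images of the free generator of \(\Z\) for any finite system of polynomial conditions, which is exactly existential closedness. Trivial amenable radical, which follows from the trivial kernel on the boundary, ensures that \(C^*_r(\Gamma)\) has unique trace, as the criterion requires. We expect the main obstacle to be the second step, specifically upgrading mere non-domesticity of \(f\) to \emph{topological} largeness of \(\Opp(f)\) in exotic, non-Moufang polygons, where no root-group structure is available. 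There we would argue combinatorially with projections in the residues of \(X\).
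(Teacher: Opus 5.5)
There is a genuine gap in your dynamical set-up, and it is exactly the obstruction the paper is built to get around. You play ping-pong on the chamber space with the attracting and repelling sets of a regular element $g$ with endpoints $c_\pm$. Whatever you call $Z_g^-$, condition (a) with a small neighbourhood of $c_+$ forces $U^-\supseteq\calD(c_-)$. The reason is that $g$ preserves the closed set $\calD(c_-)$ of chambers not opposite $c_-$, and this set misses $c_+$.

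But the ping-pong needs condition (2) of Definition~\ref{def:P-star-PHP}, both for MIF and for Ozawa's $P_{PHP}$. That is, $fU^\sigma\cap U^\tau=\emptyset$ must hold for all four sign pairs, not only for the cross pairs in your (b). For instance, a subword $xfx^{-1}$ requires $fU^-\cap U^-=\emptyset$. This can never hold when $U^-\supseteq\calD(c_-)$: the set $f\calD(c_-)\cap\calD(c_-)=\calD(fc_-)\cap\calD(c_-)$ always contains the chamber adjacent to $c_-$ on a minimal gallery towards $fc_-$.

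The missing idea is to rebalance the two sets using blocks $\calB(C)$ with two properties. First, $\calB(C)\cap\calB(D)=\emptyset$ iff $C$ and $D$ are opposite. Second, $\rho_{A,C_h^-}$ maps the complement of $\calB(C_h^-)$ into $\calB(C_h^+)$ (Lemma~\ref{lem:rank-two-block-separation}, Proposition~\ref{prop:rank-two-block-contraction}). In type $\widetilde A_2$ one can stay on chambers, taking $\calB(C)$ to be the chambers at gallery distance at most $1$ from $C$. In type $\widetilde C_2$, balls in the chamber graph do not work: radius-one balls are too small for the contraction, and radius-two balls about opposite chambers meet. The paper therefore passes to $\calS=\calP\sqcup\calL$, with $\calB(C)$ the vertices in the closed Tits ball of radius $\pi/2$ about the barycentre of $C$. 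With these blocks, all four disjointness conditions reduce to the statement that $fC_h^\sigma$ is opposite $C_h^\tau$ for every $f\in F$ and all $\sigma,\tau\in\{+,-\}$.

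This also exposes a confusion in your second step. The cross conditions ($fc_\pm$ opposite $c_\mp$) need no domesticity input at all. For fixed $c_+$, the admissible $c_-$ form a finite intersection of big cells of translates of $c_+$, which is open dense. It is the same-sign conditions ($fc_\pm$ opposite $c_\pm$) that require $c_\pm\in\calO(f)$. All four must hold simultaneously, with $c_+$ opposite $c_-$ and both in the limit set. The paper achieves this by applying strong non-domesticity to the enlarged set $F\cup a^{-1}Fa\cup a^{-1}F\cup Fa\cup\{a\}$ and putting $C^-:=aC^+$ (Lemma~\ref{lem:prepare-endpoints-strong-nondomestic}). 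It then uses Schottky products to find a loxodromic $h$ with $C_h^+$ and $C_h^-$ simultaneously in prescribed neighbourhoods of $C^+$ and $C^-$ (Proposition~\ref{prop:prox_general_type}). The minimality you invoke only moves one endpoint at a time.

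Finally, taking Theorem~\ref{thm_intro:dense_opp_geom} as a black box leaves the main geometric content unproved. Your justification in type $\widetilde C_2$ (a domestic element must fix a point of $X$) only rules out $\calO(f)=\emptyset$, which is not enough to intersect finitely many conditions. What is needed is that $\calO(f)$ is dense for every hyperbolic $f$, all non-trivial elements being hyperbolic by freeness and semisimplicity. The paper proves this via the splitting $\operatorname{Min}(f)\cong Y\times\mathbb{R}$ and the fixed-incidence lemmas of Section~\ref{sec:strong-nondomesticity-gq}.
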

    We recall that any selfless $C^*$-algebra is simple \cite[Theorem 3.1]{robert25}.
	Simplicity of $C^*_r(\Gamma)$ was very recently established \cite{ciobotaru_le-bars26}. It is open whether simplicity implies selflessness for reduced group $C^*$-algebras \cite[Problem XCI]{schafhauser2025nuclear}

    \subsection{Opposition and transversely contractive dynamics}

The key idea underlying both Theorem~\ref{Thm:MIF} and Theorem~\ref{thm:intro-selflessness} is the following dynamical group property, which is a variant of Ozawa's property \(P_{PHP}\) and of the Powers-type notion of de la Harpe \cite{deLaHarpe1985,Ozawa2025}.

\begin{defn}\label{def:P-star-PHP}
An action \(\Gamma\curvearrowright S\) is \emph{transversely contractive}
if, for every finite set
\(F\subseteq\Gamma\setminus\{e\}\), there exist
\(\gamma\in\Gamma\) and two disjoint subsets \(U^+,U^-\subseteq S\) such that:
\begin{enumerate}
\item
\(
\gamma(S\setminus U^-)\subseteq U^+ 
\) (note that this implies \(\gamma^{-1}(S\setminus U^+)\subseteq U^-\));
\item for every \(f\in F\),
\(
(U^+\cup U^-)\cap f(U^+\cup U^-)=\varnothing.
\)
\end{enumerate}
We say that a group is \emph{transversely contractive} if it admits such an action.
\end{defn}

This is an intrinsic group property: if \(\Gamma\) admits a transversely contractive action, then its left-regular action is transversely contractive as well, see Lemma~\ref{lem:pullback-left-regular-transversal}. Moreover, transversal contractivity implies MIF as well as selflessness of the reduced $C^*$-algebra, see Section~\ref{sec:P-star-from-strong-nondomesticity}. Thus, our goal is to establish transversal contractivity.

Establishing this property for groups acting on hyperbolic spaces is straightforward. One can choose \(\gamma\) to be a `sufficiently generic' loxodromic element, and take \(U^+\) and \(U^-\) to be sufficiently small neighborhoods of the attracting and repelling points of \(\gamma\) in the Gromov boundary.

In higher rank, one naturally considers the action on the flag space. A loxodromic element \(\gamma\) has an attracting point, but the repelling set is now a union of Schubert cells rather than a single point. These cells are large: in the classical case, the dimension is strictly greater than half the total dimension of the flag space. As a result, taking \(U^-\) to be a neighborhood of this repelling set, as is necessary for condition~(1), would lead to unavoidable intersections between \(U^-\) and \(fU^-\), making condition~(2) problematic.

We overcome this difficulty by splitting the root system into two blocks of equal size (see Figures~\ref{figure B(C) A2} and \ref{figure B(C) C2-G2}), which results in a smaller choice of \(U^-\). This idea is inspired by \cite{deLaHarpe1985}.\footnote{We note that, as mentioned at the end of the manuscript, there is an error in the argument of \cite{deLaHarpe1985}. Nevertheless, it is this argument that inspired the present construction.}\\

There is yet another fundamental obstacle in the higher-rank setting, namely domesticity. The following notions are explained in further detail in Section \ref{section:limit_set}.

Let \(X^\infty\) be the boundary  at infinity of a two-dimensional affine building \(X\). It is a rank-two spherical building, and  let $\mathcal F $ denote the set of chambers, endowed with the opposition relation.  

An automorphism \(g\) of \(X^\infty\) is called \emph{domestic} if its opposite geometry
$$
\mathcal O_{\mathcal F}(g)
:=
\{C\in\mathcal F\mid gC\text{ is opposite to }C\},
$$
is empty.

We extend this notion to group actions as follows. Say that an action by isometries  \(\Gamma\curvearrowright X\) is \emph{strongly non-domestic} (on the flag space) if, for every finite
\(F\subseteq\Gamma\setminus\{e\}\),

$$
\Lambda_{\mathcal F}(\Gamma)
\cap
\bigcap_{g\in F}\mathcal O_{\mathcal F}(g)
\neq\varnothing,
$$
where \(\Lambda_{\mathcal F}(\Gamma)\subseteq\mathcal F\) is the flag limit set.

 We use the notion of a group of general type (see Definition~\ref{def general type}), which is  a geometric notion introduced in \cite{ciobotaru_le-bars26} that is meant to capture Zariski density, while remaining available in exotic case.

\begin{thm}[Strong non-domesticity implies transversal contractivity]\label{Thm:intro-str_non-dom_trans_cont}
Let \(X\) be a locally finite irreducible\footnote{ The reducible case $\widetilde A_1\times \widetilde A_1$ is much simpler as it is the action on a product of trees.} affine building of any type. Let \(\Gamma<\operatorname{Isom}(X)\) act properly and type-preservingly. Assume that
\(\Gamma\) is of general type and that the action
\(\Gamma\curvearrowright\mathcal F\) is strongly non-domestic. Then the
action \(\Gamma\curvearrowright\mathcal S\) is transversely contractive, where
\(\mathcal S\) is the space of chambers in the \(\widetilde A_2\) case and the space of panels in the \(\widetilde C_2\) and \(\widetilde G_2\) cases.
\end{thm}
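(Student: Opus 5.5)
Given a finite set $F\subseteq\Gamma\setminus\{e\}$, we construct $\gamma$ as a hyperbolic element whose attracting and repelling chambers at infinity sit inside the opposite geometry of every $f\in F$. Strong non-domesticity gives a chamber $C\in\Lambda_{\mathcal F}(\Gamma)\cap\bigcap_{f\in F}\mathcal O_{\mathcal F}(f)$. Since $\mathcal O_{\mathcal F}(f)$ is open in the cone topology (opposition is an open condition on pairs of chambers in the boundary of a locally finite building), there is a neighbourhood $V$ of $C$ with $fC'$ opposite to $C'$ for all $C'\in V$ and $f\in F$. Using that $\Gamma$ is of general type and $C$ lies in the limit set, we will produce a regular hyperbolic $\gamma\in\Gamma$ with attracting chamber $\xi^+\in V$. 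We also want its repelling chamber $\xi^-$ in a suitable open set, for instance a neighbourhood of $f$-opposites or again $V$ after conjugating. The standard approach is to take a product of a regular hyperbolic element with elements moving its fixed chambers into prescribed open sets, which is a ping-pong/proximality argument on $\mathcal F$ in the spirit of Benoist's lemma. We will also arrange that $\xi^-$ is opposite to $f^{\pm1}\xi^+$ and to $f^{\pm1}\xi^-$, all of which are open conditions.

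Next we pass to $\mathcal S$, the set of chambers in the $\widetilde A_2$ case and of panels in the $\widetilde C_2,\widetilde G_2$ cases. The repelling set of $\gamma$ on $\mathcal S$ consists of the elements not opposite to $\xi^+$, which is a union of Schubert cells. It is too big, so we follow the block splitting of the roots (Figures~\ref{figure B(C) A2} and \ref{figure B(C) C2-G2}). We choose $U^+$ to be a small neighbourhood of the elements of $\mathcal S$ lying in the ``upper'' half of the Weyl-distance positions relative to $\xi^+$ and close to it. We choose $U^-$ to be the complementary half near $\xi^-$. Concretely, for $s\in\mathcal S$ we look at its Weyl/codistance position with respect to $\xi^+$ and to $\xi^-$ in the apartment they span. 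Contraction by high powers $\gamma^n$ should then send everything outside $U^-$ into $U^+$. The reason is that the dynamics of a regular hyperbolic element on a rank-two spherical building contracts each Schubert cell with respect to $\xi^-$ toward the corresponding fixed cell in the apartment. The splitting into two equal blocks ensures that every point not in $U^-$ flows into $U^+$. This is the step where working with panels rather than chambers in types $C_2,G_2$ matters: with panels the two blocks can be made genuinely complementary.

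Finally we verify condition (2) of Definition~\ref{def:P-star-PHP}. Since $U^\pm$ shrink, as $n\to\infty$, to sets determined by $\xi^\pm$ (residue-type sets of panels or chambers adjacent to, or at bounded position from, $\xi^\pm$), disjointness of $U^+\cup U^-$ from $f(U^+\cup U^-)$ reduces to statements such as: $f\xi^+$ is opposite to $\xi^+$, $f\xi^+$ is opposite to $\xi^-$, and so on. Opposite chambers have disjoint residues and disjoint nearby Schubert sets, so these opposition conditions give the required disjointness. They were arranged in the first step, and properness together with local finiteness makes the relevant neighbourhoods uniform over the finite set $F$.

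The main obstacle is the second step. We must show precisely that the chosen half of the Schubert decomposition for $\mathcal S$ gives $\gamma^n(\mathcal S\setminus U^-)\subseteq U^+$ while keeping $U^-$ small enough to be disjoint from its $F$-translates. We would first check this by hand in an apartment, a hexagon or octagon, then extend to the building using retractions onto the apartment through $\xi^\pm$.
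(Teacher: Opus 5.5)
\textbf{Verdict: genuine gap in your first step; the rest matches the paper.} Everything after the first step follows the paper's proof. You use blocks $\calB(\xi^\pm)$: radius-one chamber balls in type $\tilde A_2$, panel half-balls in types $\tilde C_2,\tilde G_2$. You get $h^m(\calS\setminus V^-)\subseteq V^+$ from two facts: the retraction $\rho_{A,C_h^-}$ sends $\calS\setminus\calB(C_h^-)$ into $\calB(C_h^+)$, and Proposition~\ref{prop dyn srh} gives the convergence. Condition~(2) then follows because $\calB(C)\cap\calB(D)=\emptyset$ exactly when $C$ and $D$ are opposite, together with normality.

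The gap is this. You need a loxodromic $h\in\Gamma$ whose two fixed chambers satisfy $fC_h^\sigma$ opposite $C_h^\tau$ for every $f\in F$ and every $\sigma,\tau\in\{+,-\}$. Strong non-domesticity only gives you one limit chamber $C$ with $fC$ opposite $C$.

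\begin{itemize}
\item Calling the remaining requirements on $\xi^-$ ``open conditions'' shows the admissible set is open, not that it is non-empty or meets $\Lambda_\calF(\Gamma)$.
\item General type controls $\Lambda_\calF(\Gamma)\cap\bigcap_i\calO(x_i)$ for finitely many fixed chambers $x_i$. Strong non-domesticity controls $\Lambda_\calF(\Gamma)\cap\bigcap_f\calO_\calF(f)$. You need a limit chamber in $\bigcap_f\calO_\calF(f)\cap\calO(C)\cap\bigcap_f\calO(f^{\pm1}C)$, which is both kinds of set at once, and neither hypothesis gives that.
\item $\xi^-$ is determined by $\gamma$, so it cannot be ``arranged'' after $\gamma$ is chosen. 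Proposition~\ref{prop:prox_general_type} needs as input two open sets that already carry all the opposition relations and contain opposite limit chambers.
\item Your $V$ only ensures $fC'$ is opposite $C'$ for $C'\in V$. So even putting both $\xi^\pm$ in $V$ would not give $f\xi^+$ opposite $\xi^-$.
\end{itemize}

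\textbf{How the paper closes it.} Choose $a\notin\{e\}\cup F\cup F^{-1}$. Apply strong non-domesticity to the enlarged set $L=F\cup a^{-1}Fa\cup a^{-1}F\cup Fa\cup\{a\}\subseteq\Gamma\setminus\{e\}$. This gives $C^+\in\Lambda_\calF(\Gamma)$ with $\ell C^+$ opposite $C^+$ for all $\ell\in L$. Set $C^-:=aC^+$; it is a limit chamber opposite $C^+$. The relations for $f$, $a^{-1}fa$, $a^{-1}f$ and $fa$ translate into $fC^+$ opposite $C^+$, $fC^-$ opposite $C^-$, $fC^+$ opposite $C^-$ and $fC^-$ opposite $C^+$, respectively. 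By openness these persist on neighbourhoods $U^\pm$, and Proposition~\ref{prop:prox_general_type} then supplies $h$.

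\textbf{Repairing your ``again $V$'' route.} This also works once you strengthen $V$. Use Lemma~\ref{lem totally opposite open sets} to choose $V\ni C$ such that $fV$ and $V$ are totally opposite for each $f\in F$. Then any two chambers of $V$ satisfy all the cross-oppositions. Take the conjugates $(rs_n)\gamma(rs_n)^{-1}$ from the proof of Proposition~\ref{prop:prox_general_type}, with $rC_0\in V$ by minimality. For large $n$ these are loxodromic with both fixed chambers in $V$, and this route does not even need the Schottky step.

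\textbf{Two smaller slips.}
\begin{itemize}
\item The repelling locus is governed by position relative to $\xi^-$, where the retraction is centred, not relative to $\xi^+$.
\item The neighbourhoods of the blocks must be fixed first, using normality to meet finitely many separation conditions, and only then is the power of $\gamma$ chosen. They do not shrink with $n$, and properness plays no role in that step.
\end{itemize}
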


Cocompact lattices are groups of general type with full flag limit set by \cite[Proposition~6.1]{ciobotaru_le-bars26}. Theorem ~\ref{Thm:intro-str_non-dom_trans_cont} therefore applies, but includes additional examples, thus extending Theorem \ref{Thm:MIF} and Theorem \ref{thm:intro-selflessness}.

Note that Theorem~\ref{Thm:intro-str_non-dom_trans_cont} is interesting also in the non-exotic case, i.e. for lattices in $\SL_3(\Q_p)$. An approach that is very similar to ours, also based on contracting dynamics on the flag variety and that is actually outlined in \cite[\S4]{deLaHarpe1985} would give that \(\operatorname{SL}_3(\Z)\) is transversely contractive. In contrast, \(\operatorname{SL}_n\) for \(n\geq4\) has many domestic elements (see Example~\ref{ex:non-domestic}), and we leave open whether lattices in these groups are transversely contractive.\\

Our goal is therefore to prove that the rank-two groups appearing in Theorem~\ref{Thm:intro-str_non-dom_trans_cont} satisfy the strong non-domesticity property. To this end, we study the opposite geometry of elements. A central geometric theorem of the paper is that, in types \(\widetilde A_2\) and \(\widetilde C_2\), opposite geometry is topologically large. 

\begin{thm}[Dense opposite geometry]\label{thm_intro:dense_opp_geom}
Let \(g\) be a type-preserving hyperbolic isometry of a locally finite thick
affine building of type \(\widetilde A_2\) or \(\widetilde C_2\). Then
\(\mathcal O_{\mathcal F}(g)\) is open and dense in \(\mathcal F\).
In the \(\widetilde A_2\)-case, the same conclusion is valid under the weaker assumption that the set of \(g\)-fixed flags
\(\operatorname{Fix}_{\mathcal F}(g)\) has empty interior.
\end{thm}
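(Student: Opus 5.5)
Openness is the easy half and will be handled first. The chamber set $\mathcal F$ of $X^\infty$ carries the topology coming from shadows: two chambers are close when they are represented by Weyl sectors, based at a fixed vertex $o$, that share a large initial piece. Opposition is an open condition in this topology. Indeed, $C,D$ are opposite exactly when some apartment of $X$ contains sectors at $o$ for $C$ and $D$ whose union has the right convexity. Equivalently, when the sector germs at $o$ are opposite in the residue after projecting to a common point, and this is decided by a bounded initial piece of the sectors. Since $g$ acts by homeomorphisms, the map $C\mapsto(C,gC)$ is continuous, so $\mathcal O_{\mathcal F}(g)$ is open.

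For density the strategy is local. We fix a chamber $C$ and a shadow neighbourhood $V$ of $C$, given by all chambers whose sector from $o$ begins with a fixed gallery up to some vertex $x$ far along the sector. We then look for $D\in V$ with $gD$ opposite $D$. The residue of $X$ at a vertex $x$ is a finite generalised polygon: the incidence graph of a projective plane in the $\widetilde A_2$ case, a generalised quadrangle in the $\widetilde C_2$ case. Chambers of $V$ correspond to all continuations of the sector past $x$. By thickness and local finiteness these continuations realise every chamber of the link at each later vertex. So the question reduces to a combinatorial one: inside the finite polygon residues along a ray, can we choose the continuation so that its $g$-image is opposite? The projections of $D$ and $gD$ to residues far away are controlled by the retraction onto an apartment containing an axis of $g$. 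We would use the structure of hyperbolic $g$: a translation axis together with a parallel set, whose endpoints give $g$-fixed simplices at infinity.

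The core step, and the main obstacle, is to produce opposition at infinity from local choices. We plan to work with a vertex $x$ far out and compare the germs of $D$ at $x$ and of $gD$ at $gx$. We project both to a common residue, e.g. via the geodesic $[x,gx]$ and $\operatorname{proj}$ maps. Opposition at infinity then becomes the requirement that a certain pair of chambers in some residue be opposite, where one chamber is freely choosable inside a thick finite polygon and the other is its image under a bijection induced by $g$. So the problem reduces to the polygon level: a collineation (or duality composed with projections) of a finite thick generalised $n$-gon, $n=3,4$, which maps some chamber to an opposite one.

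Here the known domesticity theory for polygons is what we would try first. Domestic collineations of projective planes are trivial, and in quadrangles domestic automorphisms are very restricted: they are point- or line-domestic and fix ovoids or spreads. We would use this to show that non-domesticity fails only if $g$ fixes a neighbourhood of chambers. In the $\widetilde A_2$ case this is exactly what the hypothesis that $\operatorname{Fix}_{\mathcal F}(g)$ has empty interior rules out. In the $\widetilde C_2$ case, hyperbolicity together with thickness should exclude the exceptional quadrangle configurations. We expect the delicate part to be the $\widetilde C_2$ case. There we must make sure that the induced map on residues is not one of the domestic quadrangle automorphisms. We would do this by moving $x$ along directions transverse to the axis, so that $g$ cannot act domestically on all these residues simultaneously.
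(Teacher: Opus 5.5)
\textbf{There is a genuine gap in the density part.}

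Your openness argument is fine and is essentially the paper's: opposition is an open relation on $\mathcal F\times\mathcal F$, and $C\mapsto(C,gC)$ is continuous. The gap is exactly at the step you call the core one.

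\begin{itemize}
\item A hyperbolic $g$ fixes no vertex of $X$, so it induces no automorphism of any residue.
\item Your comparison of $\Sigma_xD$ with $\operatorname{proj}_x(g\,\Sigma_xD)$ does not compute the germ $\Sigma_x(gD)$. Moreover, $\operatorname{proj}_x$ restricted to the star of $gx$ is in general neither injective nor surjective; in a tree it is constant. So $\operatorname{proj}_x\circ g$ is not a collineation of the residue at $x$.
\item What actually witnesses opposition of $D$ and $gD$ is a vertex $y$ at which $\Sigma_yD$ and $g\,\Sigma_{g^{-1}y}D$ are opposite. In your set-up $y$ lies far along the sector towards $D$, where $d(y,g^{-1}y)$ grows linearly unless $g$ fixes $D$. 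Both germs then depend on the choice of $D$, in a coupled way you do not control. Neither is free with the other its image under a fixed bijection.
\item Even granting a reduction to a single finite polygon, finite domesticity theory cannot give ``domestic only if $g$ fixes a neighbourhood of chambers''. Finite quadrangles have non-trivial domestic collineations: a symmetry about a point is line-domestic, hence maps no chamber to an opposite one.
\item An open set of chambers at infinity is not visible in any single residue, and ``moving $x$ transversally to the axis'' is not an argument.
\item You never show that a hyperbolic $g$ in type $\widetilde A_2$ has $\operatorname{Fix}_{\mathcal F}(g)$ with empty interior, nor where hyperbolicity enters in type $\widetilde C_2$.
\end{itemize}

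\textbf{How the paper proceeds.} It never descends to residues of $X$. It works on the compact polygon at infinity, where point rows and pencils have no isolated points.

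In type $\widetilde A_2$, suppose $gp\neq p$ and $(p,L)$ is not mapped to an opposite flag. Then $L\in\{p\vee gp,\,p\vee g^{-1}p\}$. So the domestic locus meets each fibre of the open map $\pi_{\mathcal P}$ over $\mathcal P\setminus\mathcal P^g$ in a finite set. Since $\mathcal P^g$ has empty interior whenever $\operatorname{Fix}_{\mathcal F}(g)$ does, a fibre lemma for open maps concludes. For hyperbolic $g$, empty interior of $\operatorname{Fix}_{\mathcal F}(g)$ comes from $\operatorname{Fix}_{\partial X}(g)=\partial\operatorname{Min}(g)$. In the singular case this puts the fixed points in $\{a\}\cup B^{\mathbf I}$ and the fixed lines in $a^{\mathbf I}\cup\{B\}$.

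In type $\widetilde C_2$, one writes $\mathcal D(g)=\pi_{\mathcal P}^{-1}(N_{\mathcal P}(g))\cup\pi_{\mathcal L}^{-1}(N_{\mathcal L}(g))$. A girth-$8$ argument shows that every line in the interior of $N_{\mathcal L}(g)$ passes through a $g$-fixed point, and dually. Then $\partial\operatorname{Min}(g)=\partial Y*\{g^-,g^+\}$ confines the fixed vertices to one apartment. In the singular case it confines them instead to $\{a^\pm\}\cup Q(a^-,a^+)$ and the pencils of $a^\pm$, or dually. The incidence images of these sets have empty interior.

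These are the ingredients your plan would need to supply.
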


Proposition~\ref{prop:A2_hyperbolic_no_open_fixed_flags} below shows that in the \(\widetilde A_2\)-case, being hyperbolic implies that \(\operatorname{Fix}_{\mathcal F}(g)\) has empty interior. Our proof in the $\tilde C_2$-case really uses the hyperbolicity assumption: we do not know if the conclusion of the lemma holds if \(\operatorname{Fix}_{\mathcal F}(g)\) is only assumed to have empty interior. 

Theorem~\ref{thm_intro:dense_opp_geom} immediately gives the following corollary.

\begin{cor}
Let \(X\) be a locally finite affine building of type
\(\widetilde A_2\) or \(\widetilde C_2\). Let
\(\Gamma<\operatorname{Isom}(X)\) be a  subgroup of general type acting properly, type-preserving, and with full flag limit set. If \(X\) is of type \(\widetilde C_2\), assume furthermore that every element is hyperbolic. Then
\(\Gamma\curvearrowright\mathcal F\) is strongly non-domestic.
\end{cor}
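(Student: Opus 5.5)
The corollary should follow from Theorem~\ref{thm_intro:dense_opp_geom} and the Baire category theorem in the compact space $\mathcal F$. Since $\Lambda_{\mathcal F}(\Gamma)=\mathcal F$, strong non-domesticity reduces to showing that for every finite $F\subseteq\Gamma\setminus\{e\}$ the intersection $\bigcap_{g\in F}\mathcal O_{\mathcal F}(g)$ is non-empty. A finite intersection of open dense subsets of any topological space is again open and dense. So it suffices to show that each $\mathcal O_{\mathcal F}(g)$, for $g\in\Gamma\setminus\{e\}$, is open and dense, and that $\mathcal F$ is non-empty. The latter holds because $X$ is thick, so $X^\infty$ is a non-empty spherical building and $\mathcal F$ is an infinite compact totally disconnected space.

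For the $\widetilde C_2$ case, every element of $\Gamma\setminus\{e\}$ is hyperbolic by hypothesis. It is type-preserving because $\Gamma$ acts type-preservingly. Theorem~\ref{thm_intro:dense_opp_geom} then applies directly and gives that $\mathcal O_{\mathcal F}(g)$ is open and dense.

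For the $\widetilde A_2$ case, hyperbolicity is not assumed, so I would use the weaker criterion in Theorem~\ref{thm_intro:dense_opp_geom}: it suffices that $\operatorname{Fix}_{\mathcal F}(g)$ has empty interior. Properness rules out nothing directly, so I would split by the type of $g$. If $g$ is hyperbolic, Proposition~\ref{prop:A2_hyperbolic_no_open_fixed_flags} gives empty interior. If $g$ is elliptic of infinite order, properness forbids this, since such elements generate non-discrete bounded subgroups. Hence a non-hyperbolic nontrivial $g$ is elliptic of finite order, and by properness it lies in a finite stabiliser. The key point, and the expected obstacle, is that such a $g$ cannot fix an open set of flags.

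I would argue this via the general-type hypothesis combined with full limit set. Suppose $\operatorname{Fix}_{\mathcal F}(g)$ contained an open set $U$. Since $\Gamma$ is of general type with $\Lambda_{\mathcal F}(\Gamma)=\mathcal F$, the action is minimal with contracting (hyperbolic) elements. Choose a hyperbolic $h\in\Gamma$ whose repelling set misses a point of $U$ and whose attracting flag lies in $U$. Then $h^n U$ exhausts a dense open set, and the conjugates $h^n g h^{-n}$ fix $h^nU$. This alone is not a contradiction, so instead I would go down to the building. An element fixing an open set of chambers at infinity fixes pointwise a union of sectors whose boundary chambers fill that open set, by convexity of fixed point sets in $X$. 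Its fixed point set then contains sectors with an open set of directions from a fixed vertex, hence it fixes arbitrarily large balls in $X$ along those directions. Combining with conjugation by $h^n$ and properness (finitely many elements fix a given ball), I expect to derive that $g$ fixes an open set containing a neighbourhood of $h$'s full repelling structure, forcing $g=e$.

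If this step resists, I would fall back to noting that the paper's precise class (Section~1.4) may already exclude such elements, e.g.\ via a faithfulness-on-open-sets property of general type groups from \cite{ciobotaru_le-bars26}. With every $\mathcal O_{\mathcal F}(g)$ open and dense, the finite intersection is non-empty, which proves the corollary.
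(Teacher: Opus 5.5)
Your overall reduction matches the paper. With $\Lambda_\calF(\Gamma)=\calF$, it suffices that each $\calO_\calF(g)$, for $g\in\Gamma\setminus\{e\}$, is open and dense, since a finite intersection of open dense sets is dense. Your $\widetilde C_2$ case is exactly Corollary~\ref{cor:gq-strong-nondomesticity}.

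\textbf{The gap.} It is in the $\widetilde A_2$ case. There $\Gamma$ may have torsion, so you must show that $\Fix_\calF(g)$ has empty interior for every non-hyperbolic $g\neq e$. Such $g$ is elliptic, using semisimplicity of isometries of affine buildings to exclude parabolics, a point you skip. Your argument for this step ends with ``I expect to derive that $g$ fixes an open set containing a neighbourhood of $h$'s full repelling structure, forcing $g=e$''. That is not an argument, and the stated target is neither justified nor what is needed. The fallback is only a guess.

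\textbf{The missing ingredient.} The paper uses Proposition~\ref{prop top free general type}, from \cite{ciobotaru_le-bars26}: for a subgroup of general type acting properly, $\Gamma\curvearrowright\Lambda_\calF(\Gamma)$ is topologically free. With full limit set, this gives empty interior of $\Fix_\calF(g)$ for all $g\neq e$ at once, so no hyperbolic/elliptic split is needed. Theorem~\ref{thm:A2-open-dense-opposite-geometry}, via Lemmas~\ref{lem:top_free_chambers_implies_vertices} and~\ref{lem:generic-opposite}, then gives open dense opposite geometry; this is Theorem~\ref{thm A2 strong domestic}.

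\textbf{Completing your own route.} It can be done, but the endgame must change.
\begin{enumerate}
\item Take a $g$-fixed point $x\in X$. Pick a loxodromic $h\in\Gamma$ with $C_h^+$ in the interior of $\Fix_\calF(g)$, which Proposition~\ref{prop:prox_general_type} provides since the limit set is full.
\item Prove that the union of the sectors $Q(x,C)$, over $C$ in a cone neighbourhood of $C_h^+$, contains balls $B(h^nx,R_n)$ with $R_n\to\infty$. These sectors are fixed pointwise by $g$. This is a genuine geometric lemma you would have to prove, for instance using sectors based at a point deep in $Q(x,C_h^+)$ together with a CAT(0) angle estimate.
\item Then $h^{-n}gh^n$ fixes $B(x,R_n)$. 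Properness allows only finitely many such elements, so one of them fixes all of $X$, is therefore trivial, and hence $g=e$.
\end{enumerate}
Fixing large balls alone gives no contradiction. What closes the argument is conjugating back to the fixed basepoint and then applying properness.
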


Theorem~\ref{thm_intro:dense_opp_geom} has independent interest: it provides a dynamical counterpart to the study of domestic automorphisms of spherical buildings, developed for instance by Parkinson, Temmermans, Thas, and Van Maldeghem
\cite{temmermans_thas_vanmaldeghem12,parkinson_temmermans_van-maldeghem15,parkinson_van-maldeghem19,parkinson_van-maldeghem24}. This line of work originated with Leeb's observation that non-trivial automorphisms of thick spherical buildings always move some residue to an opposite residue \cite[Sublemma~5.22]{leeb00}. It is also related to Phan theory, which studies the opposition geometry of elements in Moufang spherical buildings. Here the spherical building at infinity may be unrelated to any algebraic variety.

	\subsection*{Acknowledgments}
	C.LB.\ is supported by ERC Advanced Grant NET 101141693. 
	E.L.\ is supported by the FWO and the F.R.S. FNRS under the Excellence of Science (EOS) program (project ID 40007542).
	I.V.\ was supported by NSF postdoctoral fellowship grant DMS-2402368, and would like to thank Tianyi Zheng for fruitful discussions at the early stages of this project. We also thank Hendrik Van Maldeghem for helpful discussions with the second author which led to Lemma~\ref{lem:gq-local-fixed-incidence} and its proof.
	
\subsection*{Clarification of the use of AI}

An initial version of this paper contained all the results concerning
$\tilde{A}_2$-buildings, but only partial results concerning
$\tilde{C}_2$- and $\tilde{G}_2$-buildings. This version was developed
without any assistance from AI.

We identified the missing ingredients needed to extend the
results to $\tilde{C}_2$- and $\tilde{G}_2$-buildings: a characterization
of non-domestic elements and a meagerness statement for their domestic
loci, as in Theorem~\ref{thm:A2-open-dense-opposite-geometry}. We were aware of
the results of \cite{temmermans_thas_vanmaldeghem12} characterizing domestic elements in generalized quadrangles, but we had not managed to conclude the argument. We then used ChatGPT-5.6 Sol in order to prove Theorem~\ref{thm:gq-open-dense-opposite-geometry}: we asked whether an approach similar to that of the $\tilde{A}_2$-case, combined with the results and ideas present in
\cite{temmermans_thas_vanmaldeghem12} could be used to obtain the analogous statement for the $\tilde C_2$-case. Part of our input was Lemma~\ref{lem:gq-local-fixed-incidence}, of which we had a proof sketch. After some interaction, the model provided Lemma~\ref{lem:gq-common-neighbour-incidence} and Proposition~\ref{prop:gq-fixed-incidence-hyperbolic} which, combined with Lemma~\ref{lem:gq-local-fixed-incidence} and the outlined strategy, completed the proof of Theorem~\ref{thm:gq-open-dense-opposite-geometry}. We also tried to work on the $\tilde{G}_2$-case with the same model, but this did not turn out to be successful.

The authors subsequently revised the arguments and presentation
substantially and take full responsibility for the final mathematical
content of the paper.

	\section{Opposition in topological generalized polygons}

	\subsection{Affine and spherical buildings}
	
	We briefly present the notion of (discrete) buildings, following \cite{abramenko_brown08}. For the notion of Coxeter complexes, we refer to \cite[Chapter~3]{abramenko_brown08}. 
	\begin{defn}
		Let \(\Sigma\) be a Coxeter complex. A building modelled on \(\Sigma\) is a simplicial complex \(X\) covered by subcomplexes, called apartments, such that:
		\begin{itemize}
			\item every apartment is isomorphic to \(\Sigma\);
			\item every two simplices of \(X\) are contained in a common apartment;
			\item if \(\Sigma_1\) and \(\Sigma_2\) are apartments, then there exists
			an isomorphism \(\Sigma_1\to\Sigma_2\) fixing
			\(\Sigma_1\cap\Sigma_2\) pointwise.
		\end{itemize}
		The type of \(X\) is the type of the Coxeter complex \(\Sigma\). The building is called spherical, respectively affine, if \(\Sigma\) is a spherical, respectively affine, Coxeter complex.
	\end{defn} 
	
	The dimension of \(X\) is the dimension of the model Coxeter complex \(\Sigma\). Equivalently, if the Coxeter system has finite generating set \(S\), then $\dim X=|S|-1$. In this paper, we only consider affine buildings of dimension 2 and spherical buildings of dimension 1. Simplices of maximal dimension are called \emph{chambers}. Affine buildings of dimension 2 are of type $\tilde A_1 \times \tilde A_1$ (in which case $X$ is a product of simplicial trees),  $\tilde A_2 $, $\tilde C_2$ or $\tilde G_2$. 
	
	\subsection{Opposition}
	
	In a spherical building, two chambers \(C\) and \(D\) are called \emph{opposite} if their Weyl distance is the longest element \(w_0\) of the spherical Weyl group, i.e. they are contained in a common apartment, and in that apartment they correspond to opposite chambers of the spherical Coxeter complex. Two opposite chambers are contained in a unique common apartment. Two sets of chambers $V,V'$ are said to be \emph{totally opposite} if $x$ and $x'$ are opposite for any $x\in V,x'\in V'$
	
	The model spherical Coxeter complex carries its usual angular metric. Since every two points of the geometric realization of a spherical building are contained in an apartment, and since apartment changes are isometries on overlaps, these angular metrics glue to a well-defined metric on the spherical building. We denote it by \(d_T\), and call it the \emph{Tits} or \emph{angular} metric. Thus, a Tits ball is a ball for this metric. In rank two, apartments are metric circles of length \(2\pi\): for type \(I_2(n)\), each chamber has angular length \(\pi/n\).
	
	\subsection{Spherical building at infinity}
	
	An affine building inherits from $\Sigma$ a metric which makes it a $\cat$(0) space. Therefore $X$ has a \emph{visual} bordification, given by equivalence classes of rays, two rays being equivalent if they are at finite Hausdorff $d$-distance. We denote this bordification by $\overline{X} = X \cup \bd X$. The visual boundary $\bd X$ can be endowed with a natural topology and isometries of $X$ extend to homeomorphisms on the boundary.  When $X$ is locally finite (which is a standing assumption in this paper), $\bd X $ and $\overline X$ are compact.
	
	The Weyl chambers in $X$ induce a simplicial structure on $\bd X$, where each simplex is the boundary of a given Weyl chamber. With this structure, $\bd X$ becomes a  spherical building \cite[Propri\'et\'e~1.7]{parreau00}. This building is called the \emph{spherical building at infinity} and denoted by $\bdinf$. The set of chambers of $\bdinf$ is denoted by $\chinf$.  Since we will be working on 2-dimensional buildings, the spherical building at infinity will be a graph, and $\chinf$ is the set of edges of this graph.

	\subsection{Generalized polygons}\label{section:n-gon}
	The class of rank-2 spherical buildings is canonically identified with the class of generalized polygons. As these are the only buildings we shall consider and the vocabulary of $n$-gons is better suited for our purpose, let us briefly present this notion. A $2$-dimensional incidence structure is a triple $\mathcal{G}=(\calP,\calL,\mathcal{F})$ where $\calP$ and $\calL$ are disjoint sets and $\mathcal{F}$ is a subset $\mathcal{F}\subset \calP\times \calL$ corresponding to an incidence relation. We write $p\bI L$ whenever $(p, L) \in \calF$. Elements of $\calP$ are called points, elements of $\calL$ are called lines, and elements of $\mathcal{F}$ are called flags or chambers.
	We denote by $\pi_\calP : \calF \to \calP$ and $\pi_\calL : \calF \to \calL$ the natural projections from flags to points and lines. 
	
	To any incidence structure of rank $2$, one associates its incidence graph   whose vertex set is $V=\calP\sqcup\calL$, and edge set $E=\mathcal{F}$. The vertices and edges of this graph is equipped with the path metric, denoted by $d$.
	
	A \emph{generalized $n$-gon} is an incidence structure $\cG$ whose incidence graph has diameter $n$ and such that, whenever $x,y\in V$ satisfy $d(x,y)<n$, there is a unique geodesic joining $x$ and $y$. A \emph{projective plane} is a generalized $3$-gon. Generalized $4$-gons, resp. $6$-gons are also called generalized quadrangles, resp. generalized hexagons. 
	A thick spherical building of type \(I_2(n)\) determines a thick generalized \(n\)-gon by declaring the two types of vertices to be points and lines, and by taking incidence to be adjacency in the building. Conversely, the incidence graph of a thick generalized \(n\)-gon is a rank \(2\) spherical building of type \(I_2(n)\). This identification is
	canonical up to exchanging the two types. As the spherical building at infinity of an affine building of dimension 2 has rank 2, it is canonically identified with an $n$-gon. We freely use this identification. In particular, the spherical building at infinity of an affine building of type $\tilde{A}_2 $ (resp. of type $\tilde{C}_2 $, $\tilde{G}_2 $) is a projective plane (resp. a $4$-gon, $6$-gon).

	Let $\mathcal{G}=(\calP,\calL,\mathcal{F})$ be a generalized polygon. For $p\in\calP$, we denote by $p^{\bI}$ the \emph{pencil of lines} at $p$, namely the set of all lines incident to $p$. Similarly, for $L\in\calL$, we denote by $L^{\bI}$ the \emph{point row} of $L$, namely the set of all points incident to $L$.
	
	Two points $p,q\in\calP$ are said to be \emph{collinear} if there exists a line $L\in\calL$ such that $p\bI L\bI q$; in this case we write $p\perp q$. The set of all points collinear with $p$ is
	\[
	p^\perp:=\{q\in\calP\mid p\perp q\}
	=\bigcup_{L\in p^{\bI}}L^{\bI}.
	\]
	
	Dually, two lines $L,L'\in\calL$ are said to be \emph{concurrent} if there exists a point $p\in\calP$ such that $L'\bI p\bI L$; in this case we write $L\perp L'$. The set of all lines concurrent with $L$ is
	\[
	L^\perp:=\{L'\in\calL\mid L\perp L'\}
	=\bigcup_{p\in L^{\bI}}p^{\bI}.
	\]
	
	We extend the incidence notation to subsets in the natural way. For
	$A\subseteq\calP$ and $B\subseteq\calL$, set
	\[
	A^{\bI}:=\{L\in\calL\mid L\bI p\text{ for some }p\in A\},
	\qquad
	B^{\bI}:=\{p\in\calP\mid p\bI L\text{ for some }L\in B\}.
	\]
	Thus, the operation $E\mapsto E^{\bI}$ passes from one type to the other. For a subset $E$ contained in either $\calP$ or $\calL$, we further write
	\[
	E^\perp:=(E^{\bI})^{\bI},
	\]
	so that $E^\perp$ is again a subset of the same type as $E$. 
	
	\subsection{Topological $n$-gons}
	We will consider generalized polygons arising as the spherical building at infinity of model spaces. Endowed with the cone topology, they become topological generalized polygons in the sense of \cite{kramer02}. We briefly present this topology, and we refer to \cite[\S3.2]{rousseau23} and \cite{ciobotaru_muhlerr_rousseau_20} for the details, where the more general case of masures is treated; see \cite[Remark~3.4]{ciobotaru_muhlerr_rousseau_20}. 
	
	Let $X$ be an affine building of dimension 2 and let $\calG = (\calP, \calL, \calF)$ be the $n$-gon arising as the spherical building at infinity of $X$. For any chamber $x \in \calF$, let $\xi_x \in \bd X $ be the spherical barycenter of $x $ for the Tits metric. Note that this choice of barycenters is preserved under the action of $\Aut(X)$. Let $o \in X$ be a point. For any boundary point $\xi \in \bd X$, we denote by $r^\xi : [0, +\infty)\to X$ the (unit-speed) geodesic ray in the class of $\xi$ with base point $o$. A standard open neighborhood of $x \in \calF $ is given by
	\[U_o(t, x) = \{ x' \in \calF \mid [o, r^{\xi_x}(t) ] \subseteq [o, \xi_{x'}) \},\]
	where $t >0$. 
	The topology generated by this set is called the cone topology on $\calF$. A similar construction can be done to give a topology on the set of points $\calP$ and to the set of lines $\calL$.

	Let $\calJ= \calP$ or $\calL$ be the set of points or the set of lines. We denote by $\pi_{\calJ} : \calF \to \calJ$ the map that associates to a flag its face of type $\calJ$. 
	Below we gather some properties of this topology. 
	\begin{prop}\label{prop topological spherical building}
		Let $\calJ\in \{\calP, \calL, \calF\}$ be a type. 
		\begin{enumerate}
			\item The topology on $\calJ$ as defined above does not depend on the choice of the basepoint $o \in X$, and is compact and metrizable. \label{prop top 1}
			\item Endowed with these topologies, the spherical building at infinity $X^\infty$ is a compact topological $n$-gon in the sense of \cite[Definition~7.3]{kramer02}. \label{prop top 5}
		\end{enumerate}
	\end{prop}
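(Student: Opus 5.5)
The plan is to work with an explicit family of metrics on $\calF$, $\calP$ and $\calL$ that generate the cone topology, and to check the axioms of a compact topological $n$-gon from \cite[Definition~7.3]{kramer02} directly in terms of them. Fix $o\in X$. For two chambers $x,x'\in\calF$, let $\ell_o(x,x')$ be the supremum of the $t\ge 0$ such that $r^{\xi_x}|_{[0,t]}=r^{\xi_{x'}}|_{[0,t]}$, and set $\delta_o(x,x')=e^{-\ell_o(x,x')}$. Because $X$ is a simplicial complex in which geodesics issuing from $o$ branch only along walls, $\delta_o$ is an ultrametric. Its balls are exactly the sets $U_o(t,x)$, so $\delta_o$ generates the cone topology on $\calF$. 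The same construction, using the barycentres of panels at infinity, produces ultrametrics on $\calP$ and $\calL$.

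\textbf{Step 1: independence of the base point.} Take a second base point $o'$ and set $D=d(o,o')$. By convexity of the CAT(0) metric, the rays $r^{\xi}_o$ and $r^{\xi}_{o'}$ stay within distance $D$ of each other. Two rays from $o$ that share an initial segment of length $t$ therefore yield rays from $o'$ that are $2D$-close up to time roughly $t$. Since $X$ is a locally finite polyhedral complex, two geodesics from $o'$ that remain $2D$-close on a long interval must coincide on a slightly shorter interval. More precisely, the branching locus consists of walls, and the angle between the two rays at their branching point is bounded below, so the rays diverge linearly after branching. This yields $U_{o'}(t-C,x)\subseteq U_o(t,x)$ for a constant $C=C(D)$, together with the symmetric inclusion. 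Hence the topologies agree.

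\textbf{Step 2: compactness and metrizability.} Metrizability follows from the metrics constructed above. For compactness, use local finiteness: for each $t$, the set of possible initial segments $[o,r^{\xi}(t)]$ is finite up to the combinatorics of the finitely many chambers meeting the ball $B(o,t)$. Thus $\calF$ is the inverse limit of finite sets, hence compact and totally disconnected. Equivalently, this follows from the compactness of $\bd X$ for locally finite $X$, together with the fact that $\calF$ embeds as a closed subset via $x\mapsto \xi_x$. The cases of $\calP$ and $\calL$ are handled in the same way.

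\textbf{Step 3: the topological polygon axioms.} Here one has to show that the incidence relation $\calF\subset\calP\times\calL$ is closed, and that the maps sending a pair of vertices at distance $k<n$ to the unique geodesic between them are continuous on their (open) domains. Closedness is straightforward, since the projections $\pi_\calP$ and $\pi_\calL$ are continuous: a long common initial segment towards two chambers forces a long common initial segment towards their faces. For the geodesic maps, the idea is to pass to a sector. If $y$ and $y'$ are close to $y_0$, then the Weyl sectors based at $o$ pointing towards them coincide on a large ball. The unique shortest gallery from $y$ to $z$ is read off inside an apartment containing the sector germs at infinity, and by the retraction argument this gallery varies continuously.

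I expect Step 3 to be the main obstacle, specifically the continuity of the maps $(y,z)\mapsto$ projection onto the unique geodesic. Two elements at distance $k<n$ need not lie in a common sector from $o$. My first approach would be to use the simple transitivity of the action on apartments through the retraction onto an apartment centred at a chamber at infinity, as in \cite[\S3.2]{rousseau23}. Failing that, I would simply cite \cite{ciobotaru_muhlerr_rousseau_20}, Remark~3.4, where this is established for masures, and note that affine buildings are a special case.
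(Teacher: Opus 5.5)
Your argument is correct in outline, but it takes a different route from the paper, which proves nothing by hand. Item (1) is referred to the standard facts on the visual boundary of a proper $\mathrm{CAT}(0)$ space (basepoint independence, compactness, metrizability; cf.\ \cite[II.8]{bridson_haefliger99}), and item (2) is quoted from \cite[Proposition~7.5]{kramer02}. You instead verify everything from the geometry of barycentric rays.

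Your key input is right. Two rays from a common point towards barycentres of chambers (or towards vertices at infinity) can only branch where they cross a wall, at an angle bounded below by a constant depending only on the type, so they diverge linearly afterwards. This holds because the incoming direction occupies one of finitely many positions relative to the local walls. The estimate gives basepoint independence. It also identifies the topology generated by the sets $U_o(t,x)$ with the subspace topology of the cone topology on $\bd X$, a point the paper leaves implicit. As a bonus you get an explicit ultrametric and total disconnectedness; the paper's route buys brevity and the full strength of Kramer's theory.

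There are two slips to fix.
\begin{enumerate}
\item Step~1 actually yields $U_o(t,x)\subseteq U_{o'}(t-C,x)$, equivalently $U_{o'}(t+C,x)\subseteq U_o(t,x)$, not the inclusion you wrote.
\item In Step~2 you must check that every compatible thread of initial segments is again a barycentric ray, i.e.\ that barycentric points form a closed subset of $\bd X$. This holds because the type of $\xi$ is a continuous function of $r^\xi_o(1)$ via the vector-valued distance from $o$.
\end{enumerate}

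What you call the main obstacle in Step~3 is not one. Once $\calP$ and $\calL$ are compact Hausdorff and incidence is closed, continuity of the geodesic maps is formal:
\begin{enumerate}
\item The set of $(k+1)$-tuples of successively incident elements is compact.
\item Its endpoint map to pairs is therefore proper.
\item Over the pairs at distance exactly $k<n$ (a locally closed set, not an open one) this map is a continuous bijection, hence a homeomorphism, and its inverse is the geodesic map.
\end{enumerate}

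Closedness of incidence follows from compactness of $\calF$ and continuity of $(\pi_{\calP},\pi_{\calL})$. A continuous injection of a compact space into a Hausdorff space is a closed embedding, which also shows that the cone topology on $\calF$ is the subspace topology from $\calP\times\calL$.

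The one step there that does need an argument is continuity of $\pi_{\calP}$. Suppose the barycentric rays towards $x$ and $x'$ agree up to time $t$. Then any two apartments containing $o$ and $r^{\xi_x}(t)$ share the Weyl diamond spanned by these two points, since their intersection is an intersection of closed half-apartments. That diamond contains initial segments of length $\asymp t$ of the boundary rays of the sector $Q_o(x)$. A type-preserving identification of the two apartments fixing the diamond then matches the vertex rays of $Q_o(x)$ and $Q_o(x')$ along that length. So neither the sector/retraction argument nor the fallback citation is needed.
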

	
	\begin{proof}
		Statement \ref{prop top 1} follows from considerations valid in any locally compact  $\cat$(0) spaces. Statement \ref{prop top 5} is a direct consequence of \cite[Proposition~7.5]{kramer02}.
	\end{proof}
	
	We shall use the following fact.
	\begin{lem}\label{lem topo prop n gons}
		Let $\cG = (\calP, \calL, \calF)$ be a topological $n$-gon. Then the maps $\pi_\calP: \calF \to \calP$ and $\pi_\calL : \calF \to \calL$ are locally trivial bundles, hence continuous surjective and open. If $U$ is an open set of points, then the set $U^{\bI}$ of lines that meet $U$ is open (and dually for lines).
	\end{lem}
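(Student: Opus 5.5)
The plan is to derive everything from the definition of a compact topological $n$-gon in \cite[Definition~7.3]{kramer02}. By Proposition~\ref{prop topological spherical building}, this definition applies to our setting. Recall what it provides: $\calP$ and $\calL$ are compact Hausdorff, and $\calF\subseteq\calP\times\calL$ is closed and carries the subspace topology. Moreover, for each $k<n$ the map sending two elements at incidence distance $k$ to the unique geodesic joining them is continuous, and the set of such pairs is open in the space of pairs at distance $\leq k$. I would then invoke the standard structural results for compact polygons: Grundh\"ofer--Knarr--Kramer show that point rows $L^{\bI}$ and line pencils $p^{\bI}$ are all homeomorphic to each other, and that $\pi_\calP,\pi_\calL$ are locally trivial fibre bundles \cite{kramer02}. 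The first claim of the lemma is exactly this quoted result. The main work is to record, or sketch, the local trivialisation.

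The local triviality of $\pi_\calP$ near a point $p_0$ goes as follows. Choose a point $q$ opposite to $p_0$ (distance $n$ in the incidence graph). The set of points opposite $q$ is open by the openness clause, and it is a neighbourhood $V$ of $p_0$. For $p\in V$ and a line $L\bI p$, there is a unique line $M\bI q$ at distance $n-2$ from $L$, obtained as the projection of $L$ onto $q$. The map $\{(p,L):p\in V,\ L\bI p\}\to V\times q^{\bI}$, $(p,L)\mapsto (p,\operatorname{proj}_q L)$, is a bijection. Its inverse sends $(p,M)$ to the projection of $M$ onto $p$. Both directions are continuous because projections are continuous. This gives a homeomorphism $\pi_\calP^{-1}(V)\cong V\times q^{\bI}$ commuting with the projection. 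The argument for $\pi_\calL$ is dual. A locally trivial bundle with nonempty fibres (thickness) is surjective, continuous and open, because product projections are open and openness is local.

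For the last claim, note that $U^{\bI}=\pi_\calL(\pi_\calP^{-1}(U))$. Since $\pi_\calP$ is continuous, $\pi_\calP^{-1}(U)$ is open in $\calF$, and since $\pi_\calL$ is open, its image is open. The dual statement follows by exchanging the roles of points and lines.

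The main obstacle is the continuity of the projection maps, in particular checking that "distance exactly $n-2$" is an open condition inside the relevant closed set. This is precisely what Kramer's axioms encode, so I will cite \cite[\S7]{kramer02} rather than reprove it.
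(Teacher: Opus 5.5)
Your proposal is correct and matches the paper, which simply cites \cite[Propositions~2.1.8 \& 2.1.9]{kramer90} for the whole lemma; your trivialization of $\pi_\calP$ via projections onto a fixed opposite element is the standard argument behind that result, and $U^{\bI}=\pi_\calL(\pi_\calP^{-1}(U))$ is the natural way to get the last claim. One slip needs fixing: for odd $n$ no point is opposite $p_0$. This includes the projective planes of Section~\ref{sec:strong-nondomesticity-projective-planes}, where the lemma is mainly used. There $q$ must be a line, and the model fibre is its point row $q^{\bI}$, via $(p,L)\mapsto(p,m_L)$ with $m_L$ the unique point of $q$ at distance $n-2$ from $L$; for $n=3$ this is $(p,L)\mapsto(p,L\wedge q)$ with inverse $(p,m)\mapsto(p,p\vee m)$.
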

	\begin{proof}
		This is the content of \cite[Proposition~2.1.8 \& Proposition~2.1.9]{kramer90}. 
	\end{proof}

	Throughout, we always assume that generalized $n$-gons are thick, i.e. every point row and pencil of lines contains at least 3 elements. By an automorphism of a topological generalized $n$-gon we mean a homeomorphism preserving the incidence relation. We denote by $\Aut(\cG)$ the group of automorphisms of $\cG$. 
	
	\subsection{Domesticity}
	
	Let $\calG = (\calP, \calL, \calF)$ be an $n$-gon arising as the spherical building at infinity of $X$, endowed with the cone topology. 
	Two chambers $x $ and $y $ in $\calF$ are \emph{opposite} if they correspond to edges of maximal distance  in the associated incidence graph.  Any pair of opposite chambers $x^+, x^-$ belongs to a unique apartment $A^\pm$. We denote by $\calO(x)$ the set of chambers that are opposite  to $x$, which we refer to as the \emph{big cell} of $x$. We denote by $\mathcal{D}(x)= \calF - \calO(x)$ the set of chambers not opposite to $x$. 
	
	\begin{defn}\label{def:domestic}
		
		An element $\gamma \in \Aut(\calG)$ is \emph{domestic} if it maps no chamber to an opposite chamber. 
	\end{defn}
	
	\begin{rem}
		If $\calG$ is a Moufang $n$-gon, then by Tits' classification $\Aut(\calG)$ admits a BN-pair, where $B$ is the stabilizer of a chamber $x$, $N$ the normalizer of an apartment containing $x$, and $W=N/B\cap N$ the Weyl group. Then for $g, h \in \Aut(\calG)$, we have that the $W$-valued distance between the chambers $hx$ and  $ghx$ is $w \in W$ if and only if $h^{-1} gh \in Bw B$. In particular an automorphism $g$ is domestic if and only if $g$ is not conjugate to any element of $Bw_0B$, where $w_0$ denotes the long element of the associated Weyl group. The big Bruhat cell $Bw_0B$ is the only open cell in the Bruhat decomposition $G = \sqcup_{w \in W}Bw B$. From the algebraic point of view, this is the only top dimensional one. These considerations suggest that typical automorphisms are non-domestic. 
	\end{rem}
	
	For $g \in \Aut(\calG)$, we define
	\[\calO(g):=\{ x \in \calF \mid gx \text{ is opposite $x$}\}.\]
	We call $\calO(g)$ the \emph{opposite geometry} of $g$. The \emph{domestic locus} of $g$ is the set $\calD(g) := \calF - \calO(g)$.
	Of course, an automorphism $g$ is domestic if and only if $\mathcal D (g)=\mathcal F$.
	\begin{rem}
		The opposite geometry of an element $g$ often denotes all of the simplices (of any dimension) that are sent to opposite ones, but here we only consider chambers $\calO(g)$. 
	\end{rem}
	
	We will use a few facts about big cells for spherical buildings at infinity of affine buildings. These results hold in any dimension, but we stick to affine buildings of dimension 2 for consistency. 
	\begin{lem}\label{lem totally opposite open sets}
		Let $U,U'\subseteq\calF$ be open sets, and suppose that there exist
		opposite chambers $x\in U$ and $x'\in U'$. Then there exist open sets
		$V\subseteq U$ and $V'\subseteq U'$ containing $x$ and $x'$,
		respectively, such that every chamber in $V$ is opposite to every
		chamber in $V'$. In particular, for every chamber $x\in\chinf$, the
		big cell $\calO(x)$ is open.
	\end{lem}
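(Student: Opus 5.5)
The plan is to reduce everything to the geometry of the affine building $X$, using the cone topology. Fix opposite chambers $x\in U$ and $x'\in U'$, and let $A$ be the unique apartment of $\bdinf$ containing them. We realise $A$ as the boundary of an apartment $\Sigma\subseteq X$; this is possible because every apartment at infinity of a locally finite affine building bounds an apartment of $X$. Choose a basepoint $o\in\Sigma$. In $\Sigma$, the barycentric rays $[o,\xi_x)$ and $[o,\xi_{x'})$ point into opposite Weyl chambers $S_x,S_{x'}\subseteq\Sigma$ based at $o$. Their union contains a bi-infinite geodesic through $o$, namely the line joining $\xi_x$ to $\xi_{x'}$, which is regular.

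The key step is a local criterion for opposition. We claim that if $y,y'$ are chambers at infinity such that the Weyl chamber from $o$ towards $y$ contains a big enough initial piece of $S_x$, and similarly for $y'$ and $S_{x'}$, then $y$ and $y'$ are opposite. To show this, pick $t>0$ large enough that the segments $[o,r^{\xi_x}(t)]$ and $[o,r^{\xi_{x'}}(t)]$ pass through the interiors of chambers of $X$ adjacent to the vertex-neighbourhood of $o$. More precisely, we take $t$ beyond the first wall-crossing, so that the germs at $o$ of the sectors towards $y$ and $y'$ are opposite chamber germs in the link of $o$. This link is a spherical building of the same type. Set $V=U_o(t,x)\cap U$ and $V'=U_o(t,x')\cap U'$; these are open and contain $x$ and $x'$. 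For $y\in V$, $y'\in V'$, the geodesic rays $[o,\xi_y)$ and $[o,\xi_{x'})$-type rays determine sector germs at $o$ that coincide with those of $x$ and $x'$. These germs are opposite in the link $\mathrm{Lk}(o)$. We then invoke the standard fact (e.g.\ Parreau, or \cite[Chapter~11]{abramenko_brown08}) that two Weyl sectors based at $o$ whose germs at $o$ are opposite in the link are contained in a common apartment of $X$ as opposite sectors. Consequently, their chambers at infinity are opposite. In particular $y$ and $y'$ are opposite.

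The main obstacle is justifying this last fact, that opposite germs at a vertex yield opposite sectors. One route is to note that the union of the two sectors is a geodesically convex subset: any two points of it are joined by a path through $o$ that is locally geodesic at $o$, because the angle at $o$ is $\pi$ by opposition of germs. This makes the union of the two sectors a convex, isometrically embedded cone. The axioms of affine buildings (or CAT(0) convexity together with the fact that any convex hull of a chamber-germ and a sector lies in an apartment) then place it in an apartment. If $o$ is not special, we first perturb $o$ to a special vertex of $\Sigma$ inside $S_x\cap -S_{x'}$ region, which does not affect the cone topology by Proposition~\ref{prop topological spherical building}(1).

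The final claim is that $\calO(x)$ is open. This follows by applying the result with $U'=\calF$: every $x'\in\calO(x)$ has a neighbourhood $V'$ totally opposite to $x$, so $V'\subseteq\calO(x)$.
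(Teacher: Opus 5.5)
Your route differs from the paper's, which simply quotes \cite[Lemma~6.5(iv),(v)]{GrundhoferKramerVanMaldeghemWeiss2012}: opposition is an open relation in $\calF\times\calF$ for compact topological buildings, and one then takes product neighbourhoods. A direct argument via the cone topology is perfectly reasonable. However, the step you yourself flag as the main obstacle is justified incorrectly.

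Opposition of the germ chambers $c,c'$ in $\mathrm{Lk}(o)$ does not mean that every direction in $c$ makes angle $\pi$ with every direction in $c'$. In an apartment of the link containing both, a direction $u\in c$ is at angle $\pi$ only from its antipode $-u\in c'$. Hence for general $p\in Q(o,y)$ and $q\in Q(o,y')$, the path $[p,o]\cup[o,q]$ is not a local geodesic, and the union of two opposite sectors is not convex. Already in one flat, the segment from a point of $o+\bar C$ to a point of $o-\bar C$ usually leaves $(o+\bar C)\cup(o-\bar C)$, whose convex hull is the whole flat. So neither the ``convex isometrically embedded cone'' argument nor the appeal to convex hulls of germs and sectors shows that the two sectors lie in a common apartment. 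The statement you want is true, but your argument does not prove it.

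The repair is to apply your angle argument only to the barycentric rays. This also makes links, special vertices and the size of $t$ irrelevant.
Since $x,x'$ are opposite chambers of $\partial\Sigma$, the barycentres $\xi_x,\xi_{x'}$ are antipodal. So for any $o\in\Sigma$, the rays $r^{\xi_x}$ and $r^{\xi_{x'}}$ concatenate to a geodesic line of $\Sigma$.
Now let $t>0$, $y\in U_o(t,x)$ and $y'\in U_o(t,x')$. Then $r^{\xi_y}$ and $r^{\xi_{y'}}$ agree with $r^{\xi_x}$ and $r^{\xi_{x'}}$ on $[0,t]$. Their concatenation is therefore a local geodesic, hence a geodesic line because $X$ is CAT(0) \cite[Proposition~II.1.4]{bridson_haefliger99}.
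It follows that $d_T(\xi_y,\xi_{y'})\geq\angle_o(\xi_y,\xi_{y'})=\pi$, so $\xi_y$ and $\xi_{y'}$ are antipodal in a common apartment at infinity. Being interior points of $y$ and $y'$, they force $y$ and $y'$ to be opposite.

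With $V=U_o(t,x)\cap U$ and $V'=U_o(t,x')\cap U'$ this completes your proof, and your deduction that $\calO(x)$ is open is fine. The repaired argument is self-contained and gives explicit neighbourhoods. The paper's citation applies to any compact topological building, but relies on $X^\infty$ with the cone topology being one (Proposition~\ref{prop topological spherical building}).
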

	
	\begin{proof}
		By \cite[Lemma~6.5(iv)]{GrundhoferKramerVanMaldeghemWeiss2012}, the
		opposition relation is open in $\calF\times\calF$. Since $(x,x')$ is
		an opposite pair, there exist open neighborhoods $V\subseteq U$ of
		$x$ and $V'\subseteq U'$ of $x'$ such that every pair
		$(y,y')\in V\times V'$ consists of opposite chambers.
		
		The final assertion is \cite[Lemma~6.5(v)]{GrundhoferKramerVanMaldeghemWeiss2012}.
	\end{proof}
	
	\begin{lem}\label{lem open condition finite family}
		Let $F\subseteq\iso(X)$ be a finite set of isometries of the
		$2$-dimensional affine building $X$. Then the set
		\[
		\{x\in\calF\mid fx\text{ is opposite to }x
		\text{ for every }f\in F\}
		\]
		is open.
	\end{lem}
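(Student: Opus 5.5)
The set in question is a finite intersection $\bigcap_{f\in F}\calO(f)$, and a finite intersection of open sets is open. So it suffices to show that for a single isometry $f$ of $X$ the set $\calO(f)=\{x\in\calF\mid fx \text{ opposite } x\}$ is open in $\calF$.

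Fix $f$ and a chamber $x\in\calO(f)$, so that $x$ and $fx$ are opposite. I will use two facts.

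\begin{enumerate}
\item The action of $f$ on $\calF$ is continuous. Isometries of $X$ extend to homeomorphisms of $\bd X$, and $f$ maps the barycenter $\xi_{x}$ of a chamber to the barycenter $\xi_{fx}$. Concretely, $f$ maps $U_o(t,x)$ onto $U_{fo}(t,fx)$. By Proposition~\ref{prop topological spherical building}(\ref{prop top 1}), the topology on $\calF$ does not depend on the basepoint, so the sets $U_{fo}(t,fx)$ also form a neighbourhood basis of $fx$. Hence $f$ is a homeomorphism of $\calF$.
\item Openness of opposition, Lemma~\ref{lem totally opposite open sets}. Apply it with $U=U'=\calF$ and the opposite pair $(x,fx)$. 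It gives open sets $V\ni x$ and $V'\ni fx$ such that every chamber of $V$ is opposite to every chamber of $V'$.
\end{enumerate}

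Now set $W:=V\cap f^{-1}(V')$. This set is open by continuity of $f$, and it contains $x$. For $y\in W$ we have $y\in V$ and $fy\in V'$, so $fy$ is opposite $y$. Thus $W\subseteq\calO(f)$, and $\calO(f)$ is open.

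The only point needing care is the continuity of $f$ on $\calF$, specifically its compatibility with the basepoint-dependent description of the cone topology. This is settled by the basepoint independence in Proposition~\ref{prop topological spherical building}. Alternatively, it follows from the standard fact that isometries act by homeomorphisms on $\bd X$, together with the fact that chambers correspond to compact simplices of $\bd X$.
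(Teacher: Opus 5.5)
Your proof is correct and is essentially the paper's argument: reduce to a single $f$, apply Lemma~\ref{lem totally opposite open sets} to the opposite pair $(x,fx)$, and use continuity of $f$ on $\calF$ to pass to $V\cap f^{-1}(V')$, which the paper phrases as shrinking $V$ so that $fV\subseteq V'$. Your explicit justification of continuity via $fU_o(t,x)=U_{fo}(t,fx)$ and basepoint independence fills in a step the paper only asserts, and the paper's preliminary remark about domestic elements is not needed in your version, since an empty $\calO(f)$ is trivially open.
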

	
	\begin{proof}
        First note that if one of the elements $f\in F$ is domestic then the above set is empty (hence open). We shall assume therefore $F$ contains no domestic elements. 
        
		Fix $f\in F$, and choose $x\in\calF$ be such that $fx$ is opposite to
		$x$. By Lemma~\ref{lem totally opposite open sets}, there exist open
		neighborhoods $V$ of $x$ and $V'$ of $fx$ which are totally
		opposite. Since $f$ acts continuously on $\calF$, after shrinking
		$V$ if necessary we may assume that $fV\subseteq V'$. Hence $fy$ is
		opposite to $y$ for every $y\in V$. Therefore $\{x\in\calF\mid fx\text{ is opposite to }x\}$ is open.
		Taking the finite intersection over $f\in F$ proves the claim.
	\end{proof}
	
	We shall also need the following lemma.  
	
	\begin{lem}[{\cite[Corollary~2.5]{ciobotaru_le-bars26}}]\label{lem opp ch dense}
		For every $x \in \chinf$, the big cell $\calO(x)$ is dense in $\calF$.  
	\end{lem}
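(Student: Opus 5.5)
The plan is to show that every nonempty open set $U\subseteq\calF$ meets $\calO(x)$. Fix the chamber $x\in\chinf$ and such a $U$. We work in the affine building $X$ and exploit the cone topology together with the existence of apartments containing a prescribed germ.

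\textbf{Step 1: reduce to a standard neighbourhood.} Choose a basepoint $o\in X$. After shrinking, we may take $U=U_o(t,y)$ for some chamber $y\in\calF$ and some $t>0$. Then $U$ consists of all chambers $y'$ whose barycentric ray from $o$ begins with the segment $[o,r^{\xi_y}(t)]$.

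\textbf{Step 2: choose $o$ adapted to $x$.} By Proposition~\ref{prop topological spherical building}(1) the topology does not depend on the basepoint. We may therefore move $o$ to a point on the ray $r^{\xi_x}$, or more generally to a point lying in an apartment containing both $x$ and $y$. Such an apartment exists: since $X^\infty$ is a spherical building, $x$ and $y$ lie in a common apartment $A^\infty$ at infinity, and this corresponds to an apartment $A\subseteq X$.

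\textbf{Step 3: extend the germ so that the end is opposite $x$.} This is the key step. Consider the segment $\sigma=[o,r^{\xi_y}(t)]$ and the Weyl sector $S_x$ based at $o$ pointing to $x$. We want a chamber $y'$ whose sector from $o$ contains $\sigma$ and which is opposite $x$. Inside $A$, the chamber $y$ itself may fail to be opposite $x$. We use thickness of $X$: at the point $r^{\xi_y}(t)$, or slightly beyond it along the ray, we branch off into a different apartment. Concretely, pick a point $p$ far out in the sector of $y$ and consider the local building at $p$, which is a thick spherical building. Choose there a chamber germ opposite to the germ of the geodesic from $p$ back toward $x$, i.e.\ opposite to the direction of the sector $p+S_x$ at $p$. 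Extending this germ yields a sector $Q$ based at $p$ with $Q\cup S_x$-type convexity: the union of the sector pointing to $x$ from $p$ and $Q$ lies in a common apartment, in which they are opposite sectors. Hence the chamber at infinity $y'=\partial Q$ is opposite $x$.

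\textbf{Step 4: check that $y'\in U$.} The ray from $o$ to $\xi_{y'}$ must still pass through $\sigma$. This holds if $p$ is chosen far enough that the geodesic from $o$ to $\xi_{y'}$ passes close to $p$. This requires a standard CAT(0)/sector argument: rays from $o$ to points of $\partial Q$ stay in the convex hull of $o$ and $Q$, which contains $\sigma$ for $p$ far along $r^{\xi_y}$. Making this precise, i.e.\ ensuring that the ray really contains the initial segment $\sigma$ rather than merely passing nearby, is the main obstacle. I would handle it by choosing $p$ in the interior of the sector of $y$ at distance much larger than $t$, and using the fact that in the apartment containing $S_x\ni o$ and $Q$, the sector from $o$ to $y'$ contains $p$, hence contains the convex hull of $o$ and $p$, which includes $\sigma$. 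Then $y'\in U\cap\calO(x)$, which proves density.
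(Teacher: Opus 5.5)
\textbf{There is a genuine gap in Steps~3--4.} It is not the technicality you flag (containing $\sigma$ versus passing near it). The germ $c'$ you choose at $p$ is only required to be opposite the germ $e$ of the sector $Q_{p,x}$, where $Q_{q,z}$ denotes the sector based at $q$ pointing to the chamber $z$ at infinity. That condition gives no control over where $y'$ sits as seen from $o$.

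Work in your own setup: $o\in A$, the chambers $x,y$ lie in $\bd A$ with $y\notin\calO(x)$ (otherwise there is nothing to prove), $Q_{o,y}=o+\overline{C_y}$, and $p\in o+C_y$.

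\emph{A choice your rule allows fails.} The germ at $p$ of the sector $p-\overline{C_x}\subseteq A$ is opposite $e$. It produces $y'=$ the chamber of $\bd A$ opposite $x$. The ray from $o$ to $\xi_{y'}$ is $s\mapsto o-s\vec b_x$, and $-\vec b_x\neq\vec b_y$. Hence $y'\notin U_o(t,y)$.

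\emph{The Step~4 justification is false whenever $y\notin\calO(x)$.} Suppose an apartment $A'$ contains $o$, $Q_{p,x}$ and a sector $Q$ at $p$ opposite $Q_{p,x}$. Then in $A'$ one has $Q_{o,y'}=o-\overline{C_x}$, so $p\in Q_{o,y'}$ if and only if $p-o\in-\overline{C_x}$. Since $A\cap A'$ contains $Q_{o,x}$ and $[o,p]$, the coordinates of $A'$ agree with those of $A$. In $A$ we have $p-o\in C_y$, so this forces $C_y=-C_x$, which is excluded. Note also that the convex hull of $o$ and $p$ is just $[o,p]$, which contains $\sigma$ only if $p$ lies on $r^{\xi_y}$. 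Finally, a germ does not determine a sector, so the apartment in which you extend it matters.

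\textbf{What is missing is a second opposition condition, and this is where thickness must enter.} When $y\notin\calO(x)$, no chamber of the link of $p$ inside $A$ is opposite both of the germs below. Here is how to repair the argument.

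\emph{Choice of $p$ and $c'$.} Take $p$ to be a \emph{special} vertex, so that its link $\Sigma_p$ is a thick spherical building of the same type; in types $\tilde C_2$ and $\tilde G_2$ not every vertex qualifies. Require $p\in r^{\xi_y}(t)+C_y$. Let $f$ be the germ at $p$ of the backward sector $p-\overline{C_y}\subseteq A$, which contains $o$ and $\sigma$. Since $\Sigma_p$ is thick, you can choose $c'$ opposite both $e$ and $f$.

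\emph{Extension.} Extend $c'$ inside an apartment $A_1$ containing $p-\overline{C_y}$ and a chamber with germ $c'$. Such an apartment exists: take one containing that chamber and $\bd(p-\overline{C_y})$. In coordinates agreeing with $A$ on $p-\overline{C_y}$, $c'$ is the germ of $p+\overline{C_y}$ in $A_1$. Let $y'$ be its chamber at infinity.

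\emph{$y'\in U$.} In $A_1$ one has $Q_{o,y'}=o+\overline{C_y}$. Its barycentric ray on $[0,t]$ stays in $p-\overline{C_y}\subseteq A\cap A_1$, so it coincides with $\sigma$. Hence $y'\in U_o(t,y)$.

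\emph{Opposition.} Check this at $p$ rather than in a common apartment. The rays from $p$ to $\xi_x$ and $\xi_{y'}$ start at the barycentres of the opposite chambers $e,c'$ of $\Sigma_p$. Therefore $\pi=\angle_p(\xi_x,\xi_{y'})\leq d_T(\xi_x,\xi_{y'})$, which forces $x$ and $y'$ to be opposite.

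With this repair, your Step~2 becomes unnecessary.
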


	\subsection{Loxodromic elements}
	
	Let $X$ be an affine building of dimension $2$, and let $\calG=(\calP,\calL,\calF)$ be its $n$-gon at infinity. For $\gamma\in\iso(X)$, its \emph{translation length} is $\ell(\gamma):=\inf_{x\in X}d(x,\gamma x)$, and its minimal set is $\min(\gamma):=\{x\in X\mid d(x,\gamma x)=\ell(\gamma)\}$. The isometry $\gamma$ is called \emph{hyperbolic} if $\ell(\gamma)>0$ and $\min(\gamma)\neq\emptyset$, or equivalently, if it admits an axis, that is an invariant geodesic on which the action is by translation. We call $\gamma$ \emph{loxodromic} if it is hyperbolic and $\min(\gamma)$ is a unique maximal flat.
	Loxodromic elements  exhibit the strongest possible form of contracting behavior on $X^\infty$.
	\begin{rem}
		If $X$ is the Bruhat-Tits building assoicated with a group $G$, then being a loxodromic element $g$ is equivalent to the Jordan projection $\lambda(g)$ belongs to the interior of the positive Weyl chamber. The same remark holds when $X = G/K$ is a symmetric space. Loxodromic elements are called ``strongly regular hyperbolic'' in \cite{caprace_ciobotaru15}.
	\end{rem}
	
	Another characterization of loxodromic elements is the following: the endpoints of any axis of $\gamma$ are in the interior of opposite chambers $x^+_g,$ (the attracting chamber) and $x^-_g $ (the repelling chamber) in $\calF$. 
	We summarize here some properties of loxodromic elements. 
	
	\begin{prop}\label{prop dyn srh}
		Let $g \in \iso(X)$ be a type preserving loxodromic element, with unique translation apartment at infinity $A$. Let $x^-, x^+ \in \chinf$ be the repelling (resp. attracting) chamber at infinity for $g$. Then for every $x \in \chinf$, the limit $\lim g^n (x) $ exists and coincides with $\rho_{A, x^-} (x)$, where $\rho_{A, x^-}$ is the retraction onto $A$ centered at $x^-$. The convergence is uniform on compact sets. 
	\end{prop}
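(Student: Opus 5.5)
We work in the affine building $X$ and write $W^+\subseteq A$ for the Weyl chamber of $A$ based at a point of the axis and pointing towards $x^+$. The plan is to first identify the limit chamber by hand, and then obtain convergence from the contraction of $g$ along its translation flat.

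\textbf{Step 1: fix notation.} The translation apartment at infinity $A$ is the boundary of the unique maximal flat $F=\min(g)$. On $F$, $g$ acts as a translation by a vector $v$ lying in the interior of the Weyl chamber pointing to $x^+$, since $g$ is loxodromic and type-preserving. Fix a basepoint $o\in F$ and a chamber $x\in\chinf$.

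\textbf{Step 2: find the candidate limit.} Since $x$ and $x^-$ lie in a common apartment, choose a Weyl chamber $S_x$ representing $x$. Choose also a Weyl chamber $S^-\subseteq F$ representing $x^-$, based far enough along $-v$ that $S^-$ and $S_x$ share a common subsector. By the standard affine building fact (a chamber at infinity and a sector germ lie in a common apartment), there is an apartment $A'$ containing both a subsector of $S^-$ and a subsector of $S_x$. The retraction $\rho=\rho_{A,x^-}$ fixes the germ of $x^-$. On $A'$ it acts as the isometry $A'\to F$ fixing $A'\cap F$, which contains a subsector of $S^-$. Hence $\rho(x)$ is the chamber at infinity of the image $\phi(S_x)$, where $\phi:A'\to F$ is this isometry.

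\textbf{Step 3: convergence.} Apply $g^n$. The intersection $A'\cap F$ is a convex set containing a $x^-$-sector $S^-_0$, and $g^n S^-_0 \supseteq S^-_0$ grows, exhausting larger and larger regions of $F$ around $o$ as $n\to\infty$. Indeed, translation by $nv$ with $v$ regular pushes the tip of $S^-_0$ to infinity in the $x^+$ direction, so any fixed ball around $o$ is eventually contained in $g^nS^-_0$. Therefore $g^nA'$ agrees with $F$ on $B(o,R_n)$ with $R_n\to\infty$. The isometry $g^n A'\to F$ fixing this overlap is $g\phi g^{-n}$ restricted appropriately, and it maps $g^nS_x$ to $g^n\phi(S_x)$. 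This is a sector in $F$ representing $g^n\rho(x)=\rho(x)$, because $g$ fixes $A$ pointwise at infinity, being type-preserving and translating $F$.

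Consequently, the ray from $o$ to $\xi_{g^nx}$ lies in $g^nA'$ and, within $B(o,R_n)$, coincides with the ray from $o$ to $\xi_{\rho(x)}$ in $F$. For this one uses that $g^nA'\cap F$ is convex and contains $o$, together with the region in direction $\rho(x)$ up to radius $R_n$. So $g^nx\in U_o(t,\rho(x))$ once $R_n>t$, giving $g^nx\to\rho(x)$.

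\textbf{Step 4: uniformity.} This is the expected main obstacle: one must control $R_n$ uniformly for $x$ in a compact set $K$. The key quantity is the distance from $o$ to the tip of a common subsector of $S^-$ and the apartment through $x$. This distance is locally bounded in $x$ for the cone topology, because $U_o(t,x)$ consists exactly of the chambers whose rays from $o$ share a long initial segment. Hence the apartment-overlap radius is lower semicontinuous and bounded below on a neighbourhood of each point. Covering $K$ by finitely many such neighbourhoods gives a uniform $n_0(t)$ with $g^nK\subseteq\bigcup U_o(t,\rho(x))$, in the appropriate uniform sense, for $n\ge n_0$.

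If this bookkeeping becomes delicate, I would fall back on the standard argument of Caprace--Ciobotaru for strongly regular hyperbolic elements, \cite{caprace_ciobotaru15}, which proves exactly this convergence.
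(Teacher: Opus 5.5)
Your Steps 1--3 are a correct, self-contained proof of the \emph{pointwise} statement, which the paper simply cites from \cite{caprace_ciobotaru15}. Two slips need fixing. In Step 2, $S^-$ and $S_x$ cannot share a subsector unless $x=x^-$; you mean that they have subsectors lying in a common apartment $A'$. In Step 3, the conjugated isometry is $g^n\phi g^{-n}$, not $g\phi g^{-n}$.

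The genuine gap is Step 4. The asserted local bound on the distance from $o$ to the tip of the common $x^-$-sector fails at every chamber $x$ not opposite $x^-$. By Lemma~\ref{lem opp ch dense}, such an $x$ is a limit of chambers $x_k\in\calO(x^-)$ with $\rho_{A,x^-}(x_k)=x^+\neq\rho_{A,x^-}(x)$. So $\rho_{A,x^-}$ is discontinuous at $x$, whereas a uniform tip on a neighbourhood of $x$ would, through your Step 3, give uniform convergence there and hence continuity at $x$. For the same reason the statement cannot hold for arbitrary compact sets. If $x_k\in\calO(x^-)$ and $x_k\to x^-$, then $g^nx_k\to g^nx^-=x^-$ for each fixed $n$. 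Hence $\sup_{y\in\chinf}d(g^ny,\rho_{A,x^-}(y))\geq d(x^-,x^+)>0$ for every $n$ and any compatible metric $d$ on the compact space $\chinf$. ``Uniform on compact sets'' therefore has to be read as uniform convergence to $x^+$ on compact subsets of the big cell $\calO(x^-)$. This is the form in which contraction appears in \cite{kapovich_leeb_porti17anosov}, where the paper sends the reader for this point. Your Step 4 never makes such a restriction, and its justification through the sets $U_o(t,x)$ gives no control over apartments through $x$ and $x^-$. Your fallback to \cite{caprace_ciobotaru15} covers only the pointwise part.

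On compact subsets of $\calO(x^-)$ your strategy does work, once the tip is controlled by a germ condition. For a special vertex $s\in F$, let $O_s$ be the set of chambers $y$ such that the germ at $s$ of the sector $Q(s,y)$ is opposite, in the link of $s$, to the germ of $Q(s,x^-)\subseteq F$. This set is open, since that germ only depends on a short initial segment of the ray from $s$ to $\xi_y$.

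Now take $y\in O_s$ and work in an apartment containing the germ of $Q(s,y)$ and the chamber $x^-$. If $u$ is the initial direction of $[s,\xi_y)$, then $-u$ points into $Q(s,x^-)$ towards a regular point $\zeta$ of $x^-$. So the rays $[s,\xi_y)$ and $[s,\zeta)$ make an angle $\pi$ at $s$ and together form a geodesic line with regular endpoints in $y$ and $x^-$. This line lies in the unique apartment spanned by $y$ and $x^-$, which therefore contains $s$, hence $Q(s,x^-)$. Thus $S_0^-:=Q(s,x^-)$ works simultaneously for all $y\in O_s$.

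The open sets $O_{g^{-k}s}$ are again based at special vertices, since $g$ is type-preserving. They increase with $k$, because $g^{-k-1}s\in Q(g^{-k}s,x^-)$. They exhaust $\calO(x^-)$, because the apartment spanned by $y\in\calO(x^-)$ and $x^-$ shares an $x^-$-subsector with $F$, and that subsector contains $g^{-k}s$ for large $k$. A compact $K\subseteq\calO(x^-)$ therefore lies in a single $O_{g^{-k}s}$. Your Step 3 then gives one radius $R_n\to\infty$ valid for all $y\in K$, that is, $g^n\to x^+$ uniformly on $K$.
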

	
	\begin{proof}
		This is extracted from \cite[Propositions~2.10 \& 2.11]{caprace_ciobotaru15} and their proofs. The fact that the convergence is uniform is well-known, and further described in \cite{kapovich_leeb_porti17anosov,kapovich_leeb_porti18morse} (where the authors describe more generally $\sigma_{mod}$-contracting sequences). 
	\end{proof}
	\begin{lem}\label{lem anosov schottky}
		Let $\gamma_1,\dots,\gamma_N\in\iso(X)$ be loxodromic elements.
		Assume that, for every $i\neq j$ and every
		$\varepsilon,\delta\in\{+,-\}$, the chambers $C_{\gamma_i}^{\varepsilon}$ and $C_{\gamma_j}^{\delta}$ are opposite. Then there exists $r\geq1$ such that $H:=
		\left\langle
		\gamma_1^r,\dots,\gamma_N^r
		\right\rangle$ is a free group of rank $N$, and every orbit map $H\longrightarrow X$ is a quasi-isometric embedding.
		
		Moreover, there exists a continuous, $H$-equivariant, antipodal	boundary map
		\[
		\psi\colon\partial H\longrightarrow\chinf.
		\]
		This map is dynamics-preserving: every non-trivial element
		$h\in H$ is loxodromic and, if
		$\eta_h^+,\eta_h^-\in\partial H$ are its attracting and repelling
		fixed points, respectively, then
		\[
		\psi(\eta_h^+)=C_h^+,
		\qquad
		\psi(\eta_h^-)=C_h^-.
		\]
	\end{lem}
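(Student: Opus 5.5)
We will prove Lemma~\ref{lem anosov schottky} by a ping-pong argument on $\chinf$, and then upgrade the ping-pong partition into a coarse statement in $X$ that gives the quasi-isometric embedding and the boundary map.

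\textbf{Step 1: choosing the ping-pong sets.} For each $i$ and $\varepsilon\in\{+,-\}$, Lemma~\ref{lem totally opposite open sets} applied to the finitely many opposite pairs $(C^\varepsilon_{\gamma_i},C^\delta_{\gamma_j})$ with $i\neq j$ gives open neighbourhoods $V_i^\varepsilon\ni C^\varepsilon_{\gamma_i}$ such that $V_i^\varepsilon$ and $V_j^\delta$ are totally opposite whenever $i\neq j$. In particular these sets are pairwise disjoint, and their closures can be taken disjoint as well. Set
\[
W_i:=\bigcup_{j\neq i}\bigl(V_j^+\cup V_j^-\bigr)\cup V_i^{+}.
\]
Every chamber of $W_i$ other than those in $V_i^+$ is opposite to $C^-_{\gamma_i}$, and $C^+_{\gamma_i}$ is opposite to $C^-_{\gamma_i}$. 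After shrinking, $\overline{W_i}\subseteq\calO(C^-_{\gamma_i})$, which is a compact subset of the big cell.

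Proposition~\ref{prop dyn srh} gives $\gamma_i^n\to\rho_{A_i,C^-_{\gamma_i}}$ uniformly on compact sets. On $\calO(C^-_{\gamma_i})$ this retraction is constant, equal to $C^+_{\gamma_i}$. Hence there is $r$ with $\gamma_i^{r}(\overline{W_i})\subseteq V_i^+$, and the symmetric statement holds for $\gamma_i^{-r}$ with the roles of $+$ and $-$ exchanged. Taking $r$ uniform in $i$, the standard ping-pong lemma applied to $\gamma_i^{\pm r}$ shows that $H$ is free of rank $N$. More precisely, a reduced word $h=s_1\cdots s_k$ maps every chamber in the complement of $V^{-\varepsilon_k}_{i_k}$ into $V^{\varepsilon_1}_{i_1}$.

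\textbf{Step 2: quasi-isometric embedding and boundary map.} The main obstacle is turning the ping-pong on chambers into linear growth of $d(o,ho)$. I would do this via the $\sigma_{mod}$-contraction viewpoint of \cite{kapovich_leeb_porti17anosov,kapovich_leeb_porti18morse}. The ping-pong shows that along every geodesic ray $(s_1\cdots s_k)_k$ in $H$, the nested images $s_1\cdots s_k(V)$ shrink to a single chamber $\psi(\eta)$, where $V$ is a suitable ping-pong set. This shrinking follows because the uniform contraction in Step~1 can be taken with a definite factor in a metric on $\chinf$, after enlarging $r$.

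Equivalently, consider the sequence $h_k$ of prefixes of a geodesic ray. I would show that $h_k$ is uniformly regular and contracting on chambers, with the attracting chamber staying uniformly opposite to the repelling data; in the terminology of \cite{kapovich_leeb_porti17anosov}, $H$ is an Anosov, i.e. Morse, subgroup. The argument is a local-to-global principle: the elements $\gamma_i^{\pm r}$ have large regular translation, and their consecutive attracting and repelling chambers are opposite, which by antipodality yields straight Morse quasigeodesics. I would then invoke the Morse lemma of \cite{kapovich_leeb_porti18morse} to conclude that orbit maps are quasi-isometric embeddings, and that $\psi$ is well defined, continuous, equivariant and antipodal. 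Antipodality comes from the total opposition of the $V$'s: two distinct rays first differ at some letter, and their limit chambers then lie in totally opposite sets after translating by the common prefix.

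\textbf{Step 3: dynamics preservation.} For nontrivial $h\in H$, conjugate $h$ to be cyclically reduced. Ping-pong then gives a neighbourhood mapped strictly into itself, so $h$ has the attracting fixed chamber $\psi(\eta_h^+)$ in $\chinf$, and by symmetry the repelling one $\psi(\eta_h^-)$. These two chambers are opposite by antipodality. Because $h$ is a Morse element, its translation is regular, so it is loxodromic with these chambers as $C_h^\pm$, by the characterization of loxodromics via the endpoints of axes lying in the interiors of opposite chambers.
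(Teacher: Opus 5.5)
Your proposal is essentially the paper's argument. The paper proves this lemma entirely by citation: the Kapovich--Leeb Schottky theorem, transferred to Euclidean buildings via the Kapovich--Leeb--Porti Morse lemma, with $\psi$ and the dynamics coming from the Morse--Anosov property. Your Step~2 carries all the real content (quasi-isometric embedding, existence, continuity and antipodality of $\psi$, regularity of the elements of $H$), and it invokes exactly that machinery. The ping-pong in Step~1 is a harmless elementary supplement that gives freeness directly.

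One sentence in Step~1 is false and should be removed: a reduced word does \emph{not} map the whole complement of $V^{-\varepsilon_k}_{i_k}$ into $V^{\varepsilon_1}_{i_1}$. The one-letter word $\gamma_i^r$ already fails. Indeed, $\gamma_i$ acts by translation on its translation apartment, so it fixes every chamber of that apartment at infinity, and the chambers other than $C^\pm_{\gamma_i}$ lie outside small $V_i^\pm$. More generally, $\gamma_i$ preserves the closed set $\calD(C^-_{\gamma_i})$ of chambers not opposite $C^-_{\gamma_i}$. This set is disjoint from $V_i^+$, since you arranged $\overline{W_i}\subseteq\calO(C^-_{\gamma_i})$, and it is not contained in a small $V_i^-$. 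This is precisely the higher-rank phenomenon the paper emphasises: the repelling set is a Schubert set, on which Proposition~\ref{prop dyn srh} gives limits $\rho_{A,C^-}(x)\neq C^+$.

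What you actually proved is $\gamma_i^{r}(\overline{W_i})\subseteq V_i^+$ and its analogue for $\gamma_i^{-r}$. That is all that the ping-pong lemma, the nestedness of the images in Step~2, and the self-map of a neighbourhood in Step~3 require, so nothing downstream breaks.

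Two smaller points. First, for your antipodality argument when two rays first split at $\gamma_i^{r}$ versus $\gamma_i^{-r}$, you need $\overline{V_i^+}$ and $\overline{V_i^-}$ to be totally opposite. This is easy to arrange, since $C^+_{\gamma_i}$ and $C^-_{\gamma_i}$ are opposite, but your Step~1 only makes $\overline{V_i^+}$ opposite the single chamber $C^-_{\gamma_i}$. Second, the asserted contraction ``with a definite factor in a metric on $\chinf$'' is unproved. It is not obvious for compositions of different generators, whose natural contracting metrics are adapted to different repelling chambers. It becomes superfluous once you rely on the Morse lemma, as the paper does.
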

	
	\begin{proof}
		For symmetric spaces, the Schottky assertion is the content of
		\cite[Theorem~3.47]{kapovich_leeb18}. The corresponding statement
		for affine buildings follows from the Morse lemma developed in
		\cite{kapovich_leeb_porti18morse}. The assertions concerning the
		boundary map and the dynamics of non-trivial elements are standard
		consequences of the Morse--Anosov property, see
		\cite{kapovich_leeb_porti17anosov} and
		\cite[Theorem~1.4]{kapovich_leeb_porti18morse}.
	\end{proof}
	
	\begin{cor}[Schottky products]\label{cor:schottky-products}
		Let $a,b\in\iso(X)$ be loxodromic elements such that
		\[
		C_a^\varepsilon
		\text{ is opposite }
		C_b^\delta
		\qquad
		\text{for every }
		\varepsilon,\delta\in\{+,-\}.
		\]
		Then there exists $r\geq1$ such that the elements
		\[
		h_n:=a^{rn}b^{rn},
		\qquad n\geq1,
		\]
		are loxodromic and satisfy
		\[
		C_{h_n}^+\longrightarrow C_a^+,
		\qquad
		C_{h_n}^-\longrightarrow C_b^-.
		\]
		
		In particular, if $U^+$ and $U^-$ are open neighborhoods of
		$C_a^+$ and $C_b^-$, respectively, then, for all sufficiently large
		$n$, one has
		\[
		C_{h_n}^+\in U^+,
		\qquad
		C_{h_n}^-\in U^-.
		\]
	\end{cor}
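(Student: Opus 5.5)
The plan is to apply Lemma~\ref{lem anosov schottky} with $N=2$, $\gamma_1=a$, $\gamma_2=b$. The hypothesis that $C_a^{\pm}$ and $C_b^{\pm}$ are pairwise opposite is exactly what that lemma requires. It produces $r\geq1$ such that $H=\langle a^r,b^r\rangle$ is free of rank $2$ and quasi-isometrically embedded. It also gives a continuous, equivariant, antipodal, dynamics-preserving boundary map $\psi\colon\partial H\to\chinf$. We fix this $r$. The elements $h_n=a^{rn}b^{rn}=(a^r)^n(b^r)^n$ are then nontrivial elements of the free group $H$. By the dynamics-preserving property they are loxodromic, with $C_{h_n}^\pm=\psi(\eta_{h_n}^\pm)$.

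The remaining work is symbolic dynamics in the free group $H=F(s,t)$, where $s=a^r$ and $t=b^r$. The word $s^nt^n$ is cyclically reduced, so its attracting fixed point in $\partial H$ is the infinite word $\eta_{h_n}^+=(s^nt^n)^\infty$. This converges in $\partial H$ to $s^\infty=\eta_a^+$ as $n\to\infty$, since the common prefix $s^n$ grows. Similarly, $\eta_{h_n}^-=(t^{-n}s^{-n})^\infty$ converges to $t^{-\infty}=\eta_b^-$.

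By continuity of $\psi$, we get $C_{h_n}^+=\psi(\eta_{h_n}^+)\to\psi(\eta_{a^r}^+)$. Since $a$ is loxodromic, the attracting chamber of $a^r$ equals $C_a^+$, so the dynamics-preserving property identifies $\psi(\eta_{a^r}^+)$ with $C_a^+$. Likewise, $C_{h_n}^-\to C_b^-$. The final assertion about the neighbourhoods $U^\pm$ is immediate from these two convergences.

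The only point needing care is that $C_g^\pm$ is intrinsic to $g$: it should depend only on the endpoints of the axis of $g$ and be unchanged when $g$ is replaced by a power. This ensures that the chambers produced by $\psi$ agree with the ones in the statement. It holds because the minimal flat of $g^r$ contains that of $g$ and is unique, and $g^r$ translates it in the same direction. With that checked, the argument is routine.
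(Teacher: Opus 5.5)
Your proposal is correct and follows essentially the same route as the paper's proof. Both apply Lemma~\ref{lem anosov schottky} with $N=2$, write $\eta_{h_n}^+=(\alpha^n\beta^n)^\infty$ and $\eta_{h_n}^-=(\beta^{-n}\alpha^{-n})^\infty$ in $\partial H$, and conclude by continuity and the dynamics-preserving property of $\psi$; your remark that $C_{a^r}^\pm=C_a^\pm$ (an axis of $a$ is an axis of $a^r$) makes explicit an identification the paper uses without comment.
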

	
	\begin{proof}
		Let $r\geq1$ be given by Lemma~\ref{lem anosov schottky}, and set
		\[
		\alpha:=a^r,
		\qquad
		\beta:=b^r,
		\qquad
		H:=\langle\alpha,\beta\rangle.
		\]
		Then $H$ is free on the generators $\alpha$ and $\beta$. In
		particular,
		\[
		h_n=\alpha^n\beta^n
		\]
		is non-trivial for every $n\geq1$, and is therefore loxodromic by
		Lemma~\ref{lem anosov schottky}.
		
		Let $\eta_{h_n}^\pm\in\partial H$ be the attracting and repelling
		fixed points of $h_n$. With respect to the free basis
		$\{\alpha,\beta\}$, these points are represented by the infinite
		reduced words
		\[
		\eta_{h_n}^+
		=
		(\alpha^n\beta^n)^\infty
		\]
		and
		\[
		\eta_{h_n}^-
		=
		(\beta^{-n}\alpha^{-n})^\infty.
		\]
		Hence, in the boundary of $H$,
		\[
		\eta_{h_n}^+\longrightarrow\eta_\alpha^+,
		\qquad
		\eta_{h_n}^-\longrightarrow\eta_\beta^-.
		\]
		
		Let
		\[
		\psi\colon\partial H\longrightarrow\chinf
		\]
		be the boundary map from Lemma~\ref{lem anosov schottky}. By its
		continuity and the dynamics-preserving property,
		\[
		C_{h_n}^+
		=
		\psi(\eta_{h_n}^+)
		\longrightarrow
		\psi(\eta_\alpha^+)
		=
		C_\alpha^+
		=
		C_a^+,
		\]
		and similarly
		\[
		C_{h_n}^-
		=
		\psi(\eta_{h_n}^-)
		\longrightarrow
		\psi(\eta_\beta^-)
		=
		C_\beta^-
		=
		C_b^-.
		\]
		The final assertion follows from the openness of $U^+$ and $U^-$.
	\end{proof}

	\subsection{Limit set and groups of general type}\label{section:limit_set}
	Let $X$ be an affine building of dimension 2, and let $\calG = (\calP, \calL, \calF)$ be its $n$-gon at infinity.
	Let $\Gamma < \iso(X)$ be a group acting faithfully by isometries on $X$. In the sequel, we will deal with groups containing loxodromic elements and study their dynamics on the boundary, so it makes sense to consider the flag limit set  defined below. 
	\begin{defn}[Flag limit set]
		We define $\Lambda^{+}_\calF (\Gamma) \subseteq \calF$ as the set of attracting chambers of loxodromic elements in $\Gamma$. We define the flag limit set $\Lambda_\calF (\Gamma)$ as the closure of $\Lambda^{+}_\calF (\Gamma) \subseteq \chinf$ for the cone topology. 
	\end{defn}	
	We do not restrict ourselves to lattices of affine buildings. 
	\begin{defn}\label{def general type}
		We say that a subgroup $\Gamma < \iso(X)$ is \emph{of general type} if $\Gamma$ contains loxodromic elements and if for every $\Gamma$-invariant closed non-empty subset $M \subseteq \calF $ and for any finite set of chambers $x_1, \dots, x_N \in \chinf$, the intersection 
		\[M \cap \underset{i= 1, \dots,N}{\bigcap} \calO(x_i)\] is non-empty.
	\end{defn}
	
	The following shows that the flag limit set is a natural dynamical object.
	\begin{thm}\label{thm flag general type}
		Let $X$ be an affine building and let $\Gamma < \iso(X)$ be a group of general type. Then the limit set $\Lambda_\calF(\Gamma)$ is the smallest closed non-empty $\Gamma$-invariant subset of $\chinf$. It is $\Gamma$-minimal, perfect and uncountable. If $\Lambda_\calF(\Gamma) \neq \chinf$, then $\Lambda_\calF(\Gamma)$ has empty interior. If $\Gamma$ is a cocompact lattice, it has full limit set $\Lambda_\calF(\Gamma)= \chinf$. 
	\end{thm}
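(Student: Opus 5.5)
Write $\Lambda:=\Lambda_\calF(\Gamma)$, the closure of the set $\Lambda^+_\calF(\Gamma)$ of attracting chambers of loxodromic elements. The statement has five parts: minimality among closed invariant sets, minimality of the action, perfectness and uncountability, empty interior when $\Lambda\neq\chinf$, and full limit set for cocompact lattices. I would prove them in that order, using the dynamics of Proposition~\ref{prop dyn srh} together with the general type condition of Definition~\ref{def general type}.

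\emph{Step 1: $\Lambda$ is contained in every closed invariant set.} First, $\Lambda$ is nonempty, since $\Gamma$ contains loxodromic elements, and it is $\Gamma$-invariant, since $\gamma C^+_g=C^+_{\gamma g\gamma^{-1}}$. Now let $M\subseteq\chinf$ be closed, nonempty and $\Gamma$-invariant, and let $g\in\Gamma$ be loxodromic with repelling chamber $x^-$ and attracting chamber $x^+$.
\begin{itemize}
\item By Definition~\ref{def general type} applied to the single chamber $x^-$, there is some $y\in M\cap\calO(x^-)$.
\item For $y$ opposite $x^-$, the retraction $\rho_{A,x^-}(y)$ is the chamber of $A$ opposite $x^-$, which is $x^+$.
\item Proposition~\ref{prop dyn srh} then gives $g^ny\to x^+$.
\end{itemize}
Since $M$ is closed and invariant, $x^+\in M$. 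Hence $\Lambda^+_\calF(\Gamma)\subseteq M$, and so $\Lambda\subseteq M$.

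\emph{Step 2: minimality, perfectness, uncountability.} Minimality of the action follows directly from Step 1: every orbit closure in $\Lambda$ is a closed invariant set, so it contains $\Lambda$.

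For perfectness, Step 1 applied to $M=\overline{\Gamma y}$ shows that $\Lambda$ lies in the orbit closure of each $y\in\Lambda$. So it suffices to show that $\Lambda$ is infinite, because then:
\begin{itemize}
\item an isolated point $y$ would have an infinite orbit, since $\Lambda=\overline{\Gamma y}$ is infinite;
\item every point of $\Lambda$ would then be a limit of translates of $y$;
\item so all points of $\Lambda$ would be isolated, contradicting compactness of an infinite set.
\end{itemize}
To get infinitely many points, fix a loxodromic $g$. Using the general type condition with the finite set $\{x^+_g,x^-_g\}$, pick $y\in\Lambda$ opposite both. Then $\gamma$ with $\gamma x^\pm_g$ close to $y$ gives a conjugate $\gamma g\gamma^{-1}$ whose attracting and repelling chambers are opposite those of $g$. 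Corollary~\ref{cor:schottky-products} then produces loxodromic elements $h_n$ with attracting chambers $C^+_{h_n}\to C_a^+$. These chambers can be taken pairwise distinct, since they are points of $\psi(\partial H)$ for the injective antipodal boundary map $\psi$. In fact $\psi(\partial H)\subseteq\Lambda$ is a Cantor set, which gives uncountability.

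The delicate point in Step 2 is choosing $\gamma$ so that $\gamma x^+_g$ and $\gamma x^-_g$ are both close to $y$. I would avoid this by choosing $\gamma$ among powers and conjugates via north–south dynamics. If that fails, I would simply use $\gamma=h$ for a suitable loxodromic $h$ taken from Step 1 dynamics applied to $y$.

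\emph{Step 3: empty interior and cocompact lattices.} Suppose $\Lambda$ contains a nonempty open set $U$, and let $z\in\chinf$ be arbitrary. By Lemma~\ref{lem opp ch dense}, $\calO(x^-_g)$ is dense, so $U$ meets $\calO(x^-_g)$; fix $u\in U\cap\calO(x^-_g)$. Proposition~\ref{prop dyn srh} gives uniform convergence on compact sets, and $U\cap\calO(x^-_g)$ is open by Lemma~\ref{lem totally opposite open sets}. So $g^n$ pushes a neighbourhood of $u$ into arbitrarily small neighbourhoods of $x^+_g$; I would take this as the first attempt and go in the other direction. By Definition~\ref{def general type}, choose $h\in\Gamma$ loxodromic (for example a conjugate of $g$) with $x^-_h$ opposite $z$. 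Then $h^{-n}$ contracts the big cell $\calO(x^+_h)$ toward $x^-_h$, and $h^n$ expands small neighbourhoods of $x^-_h$ onto most of $\calF$. I expect this to be the main obstacle. The retraction $\rho_{A,x^-}$ is not a single point off the big cell, so expansion near $x^-_h$ only covers chambers opposite $x^-_h$, which may exclude $z$. The fix I would try is to bring a point of $U$ near $x^-_h$ by minimality and density. Then $h^nU$ contains, for large $n$, chambers converging to the retracted image of any chamber, and in particular to $z$ when $z\in\calO(x^-_h)$. Since $\Lambda$ is closed, $z\in\Lambda$, so $\Lambda=\chinf$.

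Finally, for a cocompact lattice $\Gamma$ I would cite \cite[Proposition~6.1]{ciobotaru_le-bars26}. Alternatively, I would argue directly: every Weyl chamber germ is approximated by periodic flats, and cocompactness yields loxodromic elements whose attracting chambers are dense.
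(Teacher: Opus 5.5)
The paper does not prove this statement itself. It quotes \cite[Theorem~4.4]{ciobotaru_le-bars26} for everything except the lattice case, and \cite[Proposition~6.1]{ciobotaru_le-bars26} for that case, so your self-contained argument is a different route. Your Step~1 and the deduction of minimality are correct.

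The genuine gap is in Step~3, at the point you flag. Apply Proposition~\ref{prop dyn srh} to $h^{-1}$, whose attracting chamber is $x^-_h$ and repelling chamber is $x^+_h$. This gives $h^{-n}z\to\rho_{A,x^+_h}(z)$, and the limit equals $x^-_h$ exactly when $z\in\calO(x^+_h)$. So once $U$ is a neighbourhood of $x^-_h$, the set $\bigcup_n h^nU$ contains the big cell of the \emph{attracting} chamber $x^+_h$, not that of $x^-_h$. You state this correctly one sentence earlier ("$h^{-n}$ contracts $\calO(x^+_h)$ toward $x^-_h$"), then contradict it. With your choice of $h$ ($x^-_h$ opposite $z$), nothing pushes $h^{-n}z$ into $U$. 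For example, a chamber $z\notin A$ sharing a panel with $x^+_h$ is opposite $x^-_h$, yet $h^{-n}z$ converges to the chamber of $A$ adjacent to $x^+_h$ across that panel. Likewise $z=x^+_h$ is opposite $x^-_h$ but $h^{-n}x^+_h=x^+_h\notin U$. So the sentence ``in particular to $z$ when $z\in\calO(x^-_h)$'' is false, and $z\in\Lambda$ is not justified as written.

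The repair is short. Take any loxodromic $h\in\Gamma$ and, by minimality, $\delta\in\Gamma$ with $\delta x^-_h\in U$, so that $x^-_h\in\delta^{-1}U\subseteq\Lambda$. Every $z\in\calO(x^+_h)$ then satisfies $h^{-n}z\in\delta^{-1}U$ for $n$ large, hence $\calO(x^+_h)\subseteq\Lambda$. Since $\calO(x^+_h)$ is dense (Lemma~\ref{lem opp ch dense}) and $\Lambda$ is closed, $\Lambda=\chinf$. Equivalently, choose $h$ with $x^+_h$, rather than $x^-_h$, opposite $z$.

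In Step~2, the Schottky detour, whose delicate point you leave unresolved, is not needed.
\begin{itemize}
\item If $\Lambda$ were finite, applying Definition~\ref{def general type} with $M=\Lambda$ and the finite family $\{x_i\}=\Lambda$ would give a chamber of $\Lambda$ opposite itself, which is impossible. So $\Lambda$ is infinite.
\item Your isolated-point argument should go through minimality at an arbitrary $z\in\Lambda$: if $y$ is isolated, then $y\in\overline{\Gamma z}$ forces $y\in\Gamma z$, so $z$ is isolated too. As written, ``a limit of translates of $y$'' does not make a point isolated.
\item A non-empty compact Hausdorff space without isolated points is uncountable by Baire's theorem.
\end{itemize}
If you keep the Schottky route, the delicate point is resolved as follows. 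Put $W:=\calO(x^+_g)\cap\calO(x^-_g)$. Take $h=\gamma_1g\gamma_1^{-1}$ with $\gamma_1x^-_g\in W$, using minimality. Then $h^nx^\pm_g\to x^+_h$, and $\gamma:=\delta h^n$ with $\delta x^+_h\in W$ and $n$ large sends both $x^\pm_g$ into $W$.
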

	\begin{proof}
		The last assertion is \cite[Proposition~6.1]{ciobotaru_le-bars26} and the rest of the statement is \cite[Theorem~4.4]{ciobotaru_le-bars26}. 
	\end{proof}

	For a Zariski--dense subgroup of a semisimple real algebraic group, Benoist proved that the attracting and repelling flags of proximal elements are dense in
	\(\Lambda(\Gamma)^2\) \cite[Lemma~3.6(iii)--(iv)]{benoist97}. The following proposition is an extension to the possibly exotic setting for groups of general type. 
	\begin{prop}\label{prop:prox_general_type}
		Let $\Gamma < \iso(X)$ be a group of general type acting properly on $X$.  Let $U^{+}, U^{-} \subseteq \chinf$ be non-empty open sets such that
		\[
		U^{+} \cap \Lambda_{\calF}(\Gamma) \neq \varnothing
		\qquad\text{and}\qquad
		U^{-} \cap \Lambda_{\calF}(\Gamma) \neq \varnothing.
		\]
		Assume furthermore that there exist opposite chambers $x \in U^{+} \cap \Lambda_{\calF}(\Gamma)$ and $y \in U^{-} \cap \Lambda_{\calF}(\Gamma)$. Then there exists a loxodromic element $h \in \Gamma$ with attracting chamber $C^{+}_{h} \in U^{+}$ and repelling chamber $C^{-}_{h} \in U^{-}$.
	\end{prop}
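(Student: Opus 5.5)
Since $x$ and $y$ are opposite, Lemma~\ref{lem totally opposite open sets} lets us first shrink $U^+$ and $U^-$ to open neighbourhoods $V^+\ni x$ and $V^-\ni y$ that are totally opposite. After this reduction it suffices to find one loxodromic element whose attracting chamber lies in $V^+$ and whose repelling chamber lies in $V^-$.

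First I find loxodromic elements with prescribed attracting chambers. By Theorem~\ref{thm flag general type}, $\Lambda_\calF(\Gamma)$ is the closure of $\Lambda^+_\calF(\Gamma)$. So there is a loxodromic $a\in\Gamma$ with $C_a^+\in V^+$. For the repelling side I use that $C_g^-=C_{g^{-1}}^+$. Hence the set of repelling chambers of loxodromic elements is also $\Lambda^+_\calF(\Gamma)$, and we get a loxodromic $b\in\Gamma$ with $C_b^-\in V^-$. If $C_a^\pm$ and $C_b^\pm$ were pairwise opposite, Corollary~\ref{cor:schottky-products} would finish the proof. Indeed, $h_n=a^{rn}b^{rn}$ has $C_{h_n}^+\to C_a^+\in V^+$ and $C_{h_n}^-\to C_b^-\in V^-$.

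The main obstacle is arranging this genericity. The pair $C_a^+, C_b^-$ is opposite automatically, since $V^+$ and $V^-$ are totally opposite. What remains is to make $C_a^-$ opposite to $C_b^\pm$, and $C_b^+$ opposite to $C_a^\pm$. I would try to do this by conjugating. By Lemma~\ref{lem totally opposite open sets} and Lemma~\ref{lem opp ch dense}, the set
\[
W:=\calO(C_a^+)\cap\calO(C_a^-)\cap\calO(C_b^-)\cap\calO(C_b^+)
\]
is open, and it meets $\Lambda_\calF(\Gamma)$ by the general type assumption applied with $M=\Lambda_\calF(\Gamma)$. The aim is to replace $b$ by a conjugate $b'=gbg^{-1}$. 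Its chambers are $C_{b'}^\pm=gC_b^\pm$, and I want them to lie in $W$ while $C_{b'}^-$ stays in $V^-$.

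A cleaner route avoids tracking both endpoints of a conjugate at once, and I would proceed in two rounds of Schottky products. First choose a loxodromic $c$ with $C_c^+\in W$, using density of attracting chambers in $\Lambda_\calF(\Gamma)$, and take $C_c^-$ also generic, using general type again for inverses. Then use Corollary~\ref{cor:schottky-products} with suitable pairs (for instance $a$ with $c$, and then with $b$) to produce auxiliary elements whose attracting and repelling chambers are near $C_a^+$ and $C_b^-$ respectively. Each step only needs four opposition conditions, and these hold after small perturbation because opposition is open (Lemma~\ref{lem open condition finite family}).

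Concretely, the plan is as follows. Pick $a$ with $C_a^+\in V^+$, with $C_a^-$ generic, meaning opposite to the finitely many chambers already chosen. Pick $b$ with $C_b^-\in V^-$ and $C_b^+$ generic. To achieve this I choose $a$ among loxodromics whose attracting chamber lies in $V^+\cap\Lambda_\calF(\Gamma)$. I then perturb by multiplying with high powers of an auxiliary loxodromic $d$, forming $a' = d^{m} a d^{-m}$, so that $C_{a'}^-=d^mC_a^-$ is pushed close to $C_d^+$. This movement uses Proposition~\ref{prop dyn srh}. Here $d$ is chosen with $C_d^+$ in $W$ and $C_d^-$ opposite to $C_a^+$, so that $C_{a'}^+=d^mC_a^+$ is also pushed close to $C_d^+$. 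This conjugation moves both endpoints of $a$, which is the delicate point. I expect the careful bookkeeping of which chambers must be opposite to be the real work, with properness used only to guarantee discreteness of the Schottky groups.
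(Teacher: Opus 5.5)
There is a genuine gap. The content of the statement is to control both chambers $C_g^+$ and $C_g^-$ of a \emph{single} loxodromic element, and your proposal never gives a way to do this.

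Density of $\Lambda^+_{\calF}(\Gamma)$ in $\Lambda_{\calF}(\Gamma)$, and its analogue for inverses, lets you prescribe only one chamber of an element. ``Take $C_c^-$ also generic, using general type again for inverses'' produces a possibly different element, not one $c$ with $C_c^+,C_c^-\in W$.

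Your concrete plan has the same defect in two places:
\begin{enumerate}
\item Choosing $d$ with $C_d^+\in W$ and $C_d^-$ opposite $C_a^+$ is again a joint condition on both chambers of one element. (You would also need $C_a^-$ opposite $C_d^-$ to get $d^mC_a^-\to C_d^+$ from Proposition~\ref{prop dyn srh}.) This is an instance of the proposition you are trying to prove.
\item Even granting such a $d$, $a'=d^mad^{-m}$ has $C_{a'}^+$ close to $C_d^+\in W\subseteq\calO(C_a^+)$. Nothing keeps $C_{a'}^+$ in $V^+$, which was the one property you had secured.
\end{enumerate}
So the deferred ``bookkeeping'' is the main step, not a routine check.

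The missing idea is to stop trying to make the four chambers of $a$ and $b$ mutually generic. Instead, put \emph{both} chambers of $a$ in $V^+$ and \emph{both} chambers of $b$ in $V^-$. Total opposition of $V^+$ and $V^-$ then gives all four opposition relations at once, and Corollary~\ref{cor:schottky-products} finishes.

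The paper gets this from the contracting sequence $(s_n)$, $s_nC\to C_0$, of \cite[Proposition~6.4]{ciobotaru_le-bars26}, together with minimality. The element $a_n=(r_+s_n)\gamma(r_+s_n)^{-1}$ has $C_{a_n}^\pm=r_+s_nC_\gamma^\pm\to r_+C_0\in U^+$, and similarly for $b_n$.

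Your own conjugation idea achieves the same if you reverse the order of choices:
\begin{enumerate}
\item Take a loxodromic $d$ with $C_d^+\in V^+$; only its attracting chamber matters.
\item Apply Definition~\ref{def general type} to $M=\overline{\Gamma C_d^-}$. Since $\calO(C_d^+)\cap\calO(C_d^-)$ is open, this gives $g\in\Gamma$ with $gC_d^-$ opposite both $C_d^+$ and $C_d^-$.
\item Then $c:=g^{-1}dg$ has both chambers $g^{-1}C_d^\pm$ opposite $C_d^-$.
\item Proposition~\ref{prop dyn srh} gives $d^mC_c^\pm\to C_d^+$, so $d^mcd^{-m}$ has both chambers in $V^+$ for $m$ large.
\end{enumerate}
The same construction, starting from a loxodromic whose attracting chamber lies in $V^-$, produces $b$. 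This variant uses only general type, Proposition~\ref{prop dyn srh} and Corollary~\ref{cor:schottky-products}, instead of the external contracting-sequence result.
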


	\begin{proof}
		By Lemma~\ref{lem totally opposite open sets}, after replacing
		$U^+$ and $U^-$ by smaller open neighborhoods of $x$ and $y$,
		respectively, we may assume that $U^+$ and $U^-$ are totally
		opposite. By \cite[Proposition~6.4]{ciobotaru_le-bars26}, there exist a
		sequence $(s_n)_{n\geq1}$ in $\Gamma$ and a chamber
		$C_0\in\Lambda_{\calF}(\Gamma)$ such that $s_nC\longrightarrow C_0$ for every $C\in\chinf$. By Theorem~\ref{thm flag general type}, the action of $\Gamma$ on
		$\Lambda_{\calF}(\Gamma)$ is minimal. Since $U^+$ and $U^-$ both intersect $\Lambda_{\calF}(\Gamma)$, there exist
		$r_+,r_-\in\Gamma$ such that $r_+C_0\in U^+$ and $r_-C_0\in U^-$. Since $\Gamma$ is of general type, it contains a loxodromic element
		$\gamma$. Denote its attracting and repelling chambers by
		$C_\gamma^+$ and $C_\gamma^-$, respectively. For each $n\geq1$, set
		\[
		a_n:=(r_+s_n)\gamma(r_+s_n)^{-1},
		\qquad
		b_n:=(r_-s_n)\gamma(r_-s_n)^{-1}.
		\]
		Then $a_n$ and $b_n$ are loxodromic, with
		\[
		C_{a_n}^{\pm}=r_+s_nC_\gamma^\pm,
		\qquad
		C_{b_n}^{\pm}=r_-s_nC_\gamma^\pm.
		\]
		It follows that
		\[
		C_{a_n}^{\pm}\longrightarrow r_+C_0\in U^+,
		\qquad
		C_{b_n}^{\pm}\longrightarrow r_-C_0\in U^-.
		\]
		
		Choose $N\geq1$ sufficiently large that
		\[
		C_{a_N}^+,C_{a_N}^-\in U^+,
		\qquad
		C_{b_N}^+,C_{b_N}^-\in U^-,
		\]
		and set $a:=a_N$ and $b:=b_N$. Since $U^+$ and $U^-$ are totally opposite, $C_a^\varepsilon$ is opposite $C_b^\delta$ for every $\varepsilon,\delta\in\{+,-\}$. Corollary~\ref{cor:schottky-products} therefore yields a
		loxodromic element $h\in\langle a,b\rangle<\Gamma$ such that
		\[
		C_h^+\in U^+,
		\qquad
		C_h^-\in U^-.
		\]
	\end{proof}
	
	\subsection{Topological freeness on points and lines}
	Let $X$ be an affine building of dimension 2, and let $\calG = (\calP, \calL, \calF)$ be its $n$-gon at infinity. Let $\Gamma \curvearrowright X$ be a type-preserving action by isometries on $X$, and let $\calJ \in \{\calP, \calL, \calF\}$ be a type. Recall that a continuous group action is said to be topologically-free if the fixed-point set of any non-trivial element has empty interior. 
	\begin{lem}[Chamber topological freeness implies vertex topological freeness]
		\label{lem:top_free_chambers_implies_vertices}
		Let $\calG=(\calP,\calL,\calF)$ be a compact topological generalized polygon. Assume that every point row and every pencil of lines has no isolated points. Let \(g\in\Aut(\calG)\).
		If either the fixed-point set $\calP^g$ or the fixed-line set $\calL^g$ have a  non-empty interior, then the fixed-chamber set $\calF^g$ has non-empty interior.

		Consequently, if an action \(\Gamma\curvearrowright\calF\) is
		topologically free, then the induced actions on \(\calP\) and on
		\(\calL\) are topologically free.
	\end{lem}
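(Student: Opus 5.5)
By duality (exchanging the roles of points and lines), it suffices to treat the case where \(\calP^g\) contains a non-empty open set \(U\subseteq\calP\). The aim is to produce a non-empty open set of chambers fixed by \(g\). The natural candidate is \(\pi_\calP^{-1}(U)\), but a fixed point need not have all its lines fixed. So the plan is to find, inside \(U\), a point together with an open family of lines through it that are all fixed. Incidence with fixed points will then be used to force lines to be fixed.

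\textbf{Key observation.} A line \(L\) that is incident with two distinct \(g\)-fixed points \(p\neq q\) is fixed whenever \(n\geq 3\). Indeed, \(gL\) is then incident with both \(p\) and \(q\). Two distinct lines through two distinct points would create a closed path of length \(4\) in the incidence graph. Since \(d(p,q)=2<n\), the geodesic from \(p\) to \(q\) is unique, so this is impossible and \(gL=L\).

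Now take \(V:=U^{\bI}\), which is open in \(\calL\) by Lemma~\ref{lem topo prop n gons}, and let \(L\in V\). The point row \(L^{\bI}\) meets \(U\), and \(U\cap L^{\bI}\) is open in the point row \(L^{\bI}\), which carries the subspace topology from \(\calP\). This set is non-empty and the point row has no isolated points, so it contains at least two distinct points. By the observation, \(L\) is fixed. Hence every line in \(V\) is fixed, and \(V\) is a non-empty open subset of \(\calL^g\).

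It follows that every chamber \((p,L)\) with \(p\in U\) and \(L\bI p\) is fixed, since both of its faces are fixed. Therefore
\[
\pi_\calP^{-1}(U)\subseteq \calF^g,
\]
and \(\pi_\calP^{-1}(U)\) is open and non-empty because \(\pi_\calP\) is continuous and surjective (Lemma~\ref{lem topo prop n gons}). This proves the first claim. The consequence about topological freeness is its contrapositive, applied to each non-trivial \(g\in\Gamma\).

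\textbf{Main obstacle.} The delicate point is the claim that \(U\cap L^{\bI}\) contains two points. This needs the topology on the point row \(L^{\bI}\) to be the one induced from \(\calP\), so that \(U\cap L^{\bI}\) is open in \(L^{\bI}\). That holds for compact topological polygons (point rows are closed subspaces). The hypothesis that point rows have no isolated points then rules out \(U\cap L^{\bI}\) being a single point. If the dual case is needed explicitly, it runs identically with pencils in place of point rows.
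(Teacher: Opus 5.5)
Your proof is correct and is essentially the paper's argument: for each chamber $(p,L)$ with $p\in U$, the open set $U\cap L^{\bI}$ of the point row contains a second fixed point, so $gL=L$ by uniqueness of the line through two collinear points, whence $\pi_\calP^{-1}(U)\subseteq\calF^g$. The intermediate step showing that $V=U^{\bI}$ is an open set of fixed lines is harmless but not needed for the conclusion.
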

	
	\begin{proof}
		We prove the statement about points, the statement for lines is dual.
		
		Suppose that \(\calP^g\) has non-empty interior. Let $U\subseteq \calP^g $ be a non-empty open set. We claim that every chamber in $\pi_{\calP}^{-1}(U)$ is fixed by \(g\). Since \(\pi_{\calP}:\calF\to\calP\) is continuous,
		the set \(\pi_{\calP}^{-1}(U)\) is a non-empty open subset of
		\(\calF\). This will prove the claim.
		
		Let
		\[
		C=(p,L)\in\pi_{\calP}^{-1}(U).
		\]
		Thus \(p\in U\), so \(gp=p\). We must prove that \(gL=L\).
		
		Since \(U\) is open in \(\calP\), the intersection $U\cap L^{\bI}$ is open in the point row \(L^{\bI}\). It is non-empty because it	contains \(p\). By assumption, \(L^{\bI}\) has no isolated points, so there exists $q\in U\cap L^{\bI}$ with \(q\neq p\). Since \(q\in U\subseteq\calP^g\), we have $gq= q$. Therefore the line \(gL\) contains both \(gp=p\) and \(gq=q\). But in a generalized polygon, two distinct collinear points are incident with a unique line. Hence $gL=L$ and \(C\in\calF^g\).
		
		We have shown that
		\[
		\pi_{\calP}^{-1}(U)\subseteq\calF^g.
		\]
		Since \(\pi_{\calP}^{-1}(U)\) is non-empty open, \(\calF^g\) has
		non-empty interior.
		
		The proof for lines is completely dual, and the final assertion follows immediately by applying the contrapositive	to every \(g\in\Gamma\setminus\{e\}\).
	\end{proof}

	Finally, we shall use the following result
	\begin{prop}[{\cite[Proposition~6.8]{ciobotaru_le-bars26}}]\label{prop top free general type}
		Let $\Gamma < \iso (X)$ be a subgroup of general type and assume that the action $ \Gamma \curvearrowright X$ is proper. Then the $\Gamma$-action on $\Lambda_\calF(\Gamma)$ is topologically free.
	\end{prop}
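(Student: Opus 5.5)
The statement to be proved is Proposition~\ref{prop top free general type}: for $\Gamma<\iso(X)$ of general type acting properly, the action on $\Lambda_\calF(\Gamma)$ is topologically free. I would argue by contradiction. Suppose some $g\in\Gamma\setminus\{e\}$ fixes pointwise a set $V\cap\Lambda_\calF(\Gamma)$, where $V\subseteq\calF$ is open and $V\cap\Lambda_\calF(\Gamma)\neq\varnothing$. The plan has three steps: spread this fixed region over the whole limit set, pass from fixing a large part of the boundary to fixing a large region inside $X$, and then use properness to reach a contradiction.

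\textbf{Step 1: spreading the fixed region.} Fix $x\in V\cap\Lambda_\calF(\Gamma)$. The limit set is perfect by Theorem~\ref{thm flag general type}, so $x$ is not isolated in it. By Definition~\ref{def general type} applied with $M=\Lambda_\calF(\Gamma)$, I choose chambers $y,y'\in\Lambda_\calF(\Gamma)$ such that $x$ is opposite both. I then apply Proposition~\ref{prop:prox_general_type} with $U^+=V$ and $U^-$ a small neighbourhood of $y$. This gives a loxodromic $h\in\Gamma$ with $C_h^+\in V$ and $C_h^-$ close to $y$.

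Now consider the conjugates $g_n:=h^{-n}gh^{n}$. Each $g_n$ fixes pointwise $h^{-n}(V\cap\Lambda_\calF(\Gamma))$. By Proposition~\ref{prop dyn srh} applied to $h^{-1}$, every chamber opposite $C_h^+$ is eventually pulled into a fixed neighbourhood of $C_h^-$. So these fixed sets grow and exhaust $\Lambda_\calF(\Gamma)\cap\calO(C_h^+)$ on compacta.

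\textbf{Step 2: fixing a large region of $X$.} Since $\Lambda_\calF(\Gamma)$ is uncountable and the general type condition gives many points in general position, $g_n$ fixes a finite family of limit chambers containing two pairs of opposite chambers in general position. Each opposite pair determines a unique apartment at infinity, and hence a unique flat in $X$ with that boundary, since the building is thick and the opposite chambers span an apartment. An isometry fixing the boundary of that apartment pointwise must translate the corresponding flat. Fixing two transverse such apartments then forces $g_n$ to fix a point $o$ of $X$, up to bounded displacement.

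It is cleaner to run this argument with $g$ itself before conjugating. That is, I would show that $g$ displaces some flats $F_1,\dots,F_k$ with boundaries in $V$ by bounded amounts. The conjugates $h^{-n}gh^n$ then move a fixed basepoint by a uniformly bounded distance, because the flats $h^{-n}F_i$ still pass near the basepoint, their boundaries now sitting near the limit of $h^{-n}$.

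\textbf{Step 3: the contradiction.} By properness, the uniformly bounded displacement means only finitely many distinct elements $h^{-n}gh^{n}$ occur. Hence some power $h^m$ with $m\neq 0$ commutes with $g$. Then $g$ preserves $\min(h^m)$, which is a unique maximal flat since $h$ is loxodromic. So $g$ stabilizes the apartment at infinity of $h$. Replacing $h$ by another loxodromic element $h'$, chosen via Proposition~\ref{prop:prox_general_type} with attracting chamber in $V$ but in general position with respect to $h$, shows that $g$ stabilizes two apartments whose union is not contained in a single apartment. Together with $g$ fixing an open piece of the limit set, a finite-order and finite-translation argument forces $g$ to act trivially on the convex hull of these flats. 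Iterating over the dense set of such flats, $g$ fixes $X$ pointwise, so $g=e$ by faithfulness.

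The main obstacle is Step 2: deducing from ``$g$ fixes an open subset of the limit set'' that the conjugates $h^{-n}gh^n$ have uniformly bounded displacement on a fixed basepoint. I would first try to handle this through the unique flats spanned by pairs of opposite fixed chambers in $V\cap\Lambda_\calF(\Gamma)$, using Lemma~\ref{lem totally opposite open sets} to obtain totally opposite open pieces.
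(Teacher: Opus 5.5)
There are two genuine gaps. Note that the paper gives no proof of this statement; it imports it from \cite[Proposition~6.8]{ciobotaru_le-bars26}, so your argument has to stand on its own.

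\textbf{Step~2 is not proved, and the route you sketch fails.} Let $F$ be the flat spanned by opposite chambers $x,x'\in V$, generically both opposite $C_h^+$. Then $h^{-n}F$ is spanned by $h^{-n}x$ and $h^{-n}x'$, and both converge to $C_h^-$. Flats spanned by opposite pairs that converge to a common chamber leave every compact set. So the flats $h^{-n}F$ do not pass near a fixed basepoint. Relatedly, $h^{-n}V\cap\Lambda_\calF(\Gamma)$ exhausts $\Lambda_\calF(\Gamma)\cap\calO(C_h^-)$, not $\calO(C_h^+)$.

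No flats are needed. Take $o$ on an axis $c$ of $h$. Since $g$ fixes $C_h^+$, it fixes the endpoint $c(+\infty)$ (for type-preserving $g$). Hence $t\mapsto d(c(t),gc(t))$ is convex and bounded on $[0,\infty)$, so it is non-increasing. This gives $d(o,h^{-n}gh^no)=d(h^no,gh^no)\le d(o,go)$ for all $n\ge 0$. From here your properness deduction is correct: $gh^m=h^mg$ for some $m\ge1$. It follows that $gC_h^-=C_h^-$. Moreover, $h^{mk}z\to C_h^+$ for every $z\in\calO(C_h^-)$ by Proposition~\ref{prop dyn srh}, so $g$ fixes every chamber of $\Lambda_\calF(\Gamma)\cap\calO(C_h^-)$.

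\textbf{The end of Step~3 is unsupported, and this is the decisive gap.} The ``finite-order and finite-translation argument'' is never given. Flats whose boundaries are in general position need not intersect, so no fixed point is produced. Consequently ``iterating over the dense set of such flats, $g$ fixes $X$ pointwise'' does not follow.

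When $\Lambda_\calF(\Gamma)=\calF$, the only case in which the paper uses this proposition, the repair is easy:
\begin{enumerate}
\item $\calO(C_h^-)$ is dense by Lemma~\ref{lem opp ch dense}, so $g$ acts trivially on $\partial X$.
\item $g$ cannot be hyperbolic. Otherwise $\Fix_{\partial X}(g)=\partial\min(g)$, as in the proof of Proposition~\ref{prop:A2_hyperbolic_no_open_fixed_flags}, and this is a proper subset of $\partial X$ in a thick building.
\item An elliptic $g$ fixing $\partial X$ pointwise fixes every ray from one of its fixed points. Since geodesics extend in $X$, it fixes all of $X$, so $g=e$.
\end{enumerate}

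In general the argument does not close. You would first need $\Lambda_\calF(\Gamma)\cap\calO(C_h^-)$ to be dense in $\Lambda_\calF(\Gamma)$. Even granting that, you only learn that $g$ fixes pointwise the union $Y$ of flats spanned by opposite pairs of limit chambers. When $\Lambda_\calF(\Gamma)\neq\calF$, $Y$ is a proper $\Gamma$-invariant subset of $X$. This places $g$ in the kernel of $\Gamma\to\iso(Y)$, which is a finite normal subgroup by properness. Nothing in your argument shows this kernel is trivial.

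This is exactly where extra input is needed. The analogous statement for trees fails: take $\Gamma_0\times\Z/2\Z$, with $\Gamma_0$ a Schottky group and the involution acting trivially on the convex core of $\Gamma_0$ and $\Gamma_0$-equivariantly on the branches off it. That group is discrete and non-elementary, but the involution fixes the whole limit set.
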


	We close this section by extending Definition~\ref{def:domestic} to the level of groups. 
	
	\begin{defn}\label{def:strong_non-dom}
		Let $\Gamma$ be a group and let $\Gamma \curvearrowright\calG $ be a continuous action by homeomorphisms on a compact generalized polygon $\calG =(\calP,\calL,\calF) $. 
		We say that the $\Gamma$-action on $\calF$ is \emph{strongly non-domestic} if for any finite set $F \subseteq \Gamma \setminus \{e\}$, the opposite geometry
		\[
		\Opp_\calF(F):=\bigcap_{f\in F}\calO_\calF(f)
		\]
		intersects $\Lambda_\calF(\Gamma)$.
	\end{defn}

	\section{Strong non-domesticity in projective planes}
	\label{sec:strong-nondomesticity-projective-planes}
	A generalized $3$-gon is called a \emph{projective plane}. Equivalently, a projective plane is a two-dimensional incidence geometry satisfying:
	\begin{enumerate}
		\item any two distinct points $p,q$ are incident to a unique line, denoted by $p\vee q$;
		\item any two distinct lines $L,L'$ are incident to a unique point, denoted by $L\wedge L'$;
		\item there exist four points no three of which are collinear.
	\end{enumerate}
	
	The purpose of this section is twofold. We first prove an intrinsic result about automorphisms of projective-planes: an element whose set of fixed flags is empty interior has a dense opposite geometry. We then deduce strong non-domesticity for groups of general type with full flag limit set.
	
	Let $\calG$ be a topological projective plane. Recall that $(V,\bI)$ denotes the corresponding incidence graph, endowed with its path metric $d$. Formally, $d$ is defined on points and lines (that is vertices of $(V,\bI)$), but we also use $d$ for the induced chamber distance (edges of  $(V,\bI)$).
	
	We will need the following basic topological lemma. 
	\begin{lem}\label{lem:topology}
		Let $f : Y \to Z$ be a continuous, open, and surjective map between topological spaces, and let $A \subseteq Y$. Let $Z_{0} \subseteq Z$ be a subset with empty interior, and assume that for every $z \in Z \setminus Z_{0}$ the intersection $A \cap f^{-1}(z)$ has empty interior in the fibre $f^{-1}(z)$, endowed with the subspace topology. Then $A$ has empty interior in $Y$.
	\end{lem}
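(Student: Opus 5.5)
The plan is a direct argument by contradiction, using only that $f$ is open. Suppose that $A$ has non-empty interior in $Y$, and fix a non-empty open set $U\subseteq Y$ with $U\subseteq A$. We will produce a point $z\in Z\setminus Z_0$ such that $A\cap f^{-1}(z)$ contains a non-empty relatively open subset of the fibre $f^{-1}(z)$. This contradicts the hypothesis.

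First, since $f$ is an open map, the image $f(U)$ is an open subset of $Z$, and it is non-empty because $U$ is. By assumption $Z_0$ has empty interior, so the non-empty open set $f(U)$ cannot be contained in $Z_0$. We may therefore choose a point $z\in f(U)\setminus Z_0$.

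Next, consider the fibre $f^{-1}(z)$ with its subspace topology. The set $U\cap f^{-1}(z)$ is open in $f^{-1}(z)$ by the definition of the subspace topology. It is non-empty because $z\in f(U)$. Moreover,
\[
U\cap f^{-1}(z)\subseteq A\cap f^{-1}(z),
\]
so $A\cap f^{-1}(z)$ has non-empty interior in the fibre. Since $z\notin Z_0$, this contradicts the hypothesis, and hence $A$ has empty interior in $Y$.

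There is no real obstacle in this argument. The one point to check is that the relative openness of $U\cap f^{-1}(z)$ is exactly the notion of interior used in the hypothesis on fibres, and it is. Continuity and surjectivity of $f$ are not needed here. In the intended applications, for instance with $f=\pi_{\calP}$ or $\pi_{\calL}$ and Lemma~\ref{lem topo prop n gons}, these properties will hold anyway.
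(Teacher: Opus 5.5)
Your proof is correct and follows the paper's argument essentially verbatim: you take a non-empty open $U\subseteq A$, use openness of $f$ and the empty interior of $Z_0$ to pick $z\in f(U)\setminus Z_0$, and observe that $U\cap f^{-1}(z)$ is a non-empty relatively open subset of $A\cap f^{-1}(z)$. Your remark that continuity and surjectivity are not actually used is also accurate: the paper cites surjectivity to get $f(U)\neq\varnothing$, but $U\neq\varnothing$ already suffices.
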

	\begin{proof}
		Suppose, towards a contradiction, that $A$ contains a non-empty open subset $U\subseteq Y$. Since $f$ is open and surjective, $f(U)$ is a non-empty open subset of $Z$. Since $Z_0$ has empty interior, $f(U)$ is not contained in $Z_0$. Choose $z\in f(U)\setminus Z_0$. Then $U\cap f^{-1}(z)$ is a non-empty open subset of the fibre $f^{-1}(z)$, and it is contained in $A\cap f^{-1}(z)$, contradicting the hypothesis.
	\end{proof}
	Now we show that the set of fixed flags of a hyperbolic element of $X$ has empty interior. 
	\begin{prop}
		\label{prop:A2_hyperbolic_no_open_fixed_flags}
		Let \(X\) be a thick locally finite affine building of type \(\widetilde A_2\), and let \(\calG=(\calP,\calL,\calF)\) be its projective plane at infinity.
		Let \(g\in\iso(X)\) be type-preserving and hyperbolic. Then $\Fix_{\calF}(g) $ has empty interior. 
	\end{prop}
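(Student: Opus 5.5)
The plan is to argue by contradiction, using the affine structure of $X$ to turn an open set of fixed flags into a translation vector that must be parallel to two non-parallel wall directions. Suppose $U\subseteq\Fix_{\calF}(g)$ is a non-empty open set. Shrinking $U$, we may assume $U=U_o(t,x)$ is a standard cone neighbourhood of some chamber $x\in\calF$. By Lemma~\ref{lem opp ch dense}, the big cell $\calO(x)$ is dense, so $U$ contains a chamber $y$ opposite to $x$.

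\textbf{Step 1: $g$ acts on one flat by a non-zero translation.} The opposite chambers $x,y$ span a unique apartment of $X^\infty$, and hence a unique maximal flat $A\subseteq X$ with that boundary. Since $g$ fixes $x$ and $y$, it stabilises $A$. Since $g$ is type-preserving, it fixes the boundary apartment $\partial A$ chamberwise, hence pointwise. A type-preserving isometry of a Euclidean plane that fixes its circle at infinity pointwise is a translation, so $g$ acts on $A$ as a translation by some vector $v$. If $v=0$, then $g$ fixes a point of $X$, so $\ell(g)=0$, contradicting hyperbolicity. Hence $v\neq0$.

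\textbf{Step 2: branching at a wall forces $v$ to be parallel to that wall.} Let $W\subseteq A$ be a wall, let $H\subseteq A$ be the closed half-flat bounded by $W$ whose boundary contains $y$, and suppose the sector from $o$ towards $x$ crosses $W$ far out, beyond distance $t$ from $o$. By thickness, there is a second flat $A'$ with $A\cap A'=H$. Its chamber at infinity $x'$, which replaces $x$, then still satisfies $[o,r^{\xi_x}(t)]\subseteq[o,\xi_{x'})$. So $x'\in U$, and $x'$ is opposite $y$. Applying Step~1 to the pair $(x',y)$, $g$ translates $A'$ by some vector $v'$. Since $A$ and $A'$ agree on the half-flat $H$, we get $v'=v$. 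Moreover, $g$ stabilises $A\cap A'=H$, and therefore stabilises its boundary wall $W$. A translation preserving a half-plane must translate along its boundary line, so $v$ is parallel to $W$.

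\textbf{Step 3: two wall directions give $v=0$.} In type $\widetilde A_2$ there are three wall directions. Walls of at least two distinct directions cross the sector from $o$ towards $x$ arbitrarily far out, for instance the two walls bounding translates of that sector. Applying Step~2 to a wall of each of two non-parallel directions forces $v$ to be parallel to both, so $v=0$. This contradicts Step~1.

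I expect the main obstacle to be the bookkeeping in Step 2. One must check that the branched chamber $x'$ really stays in the cone neighbourhood $U_o(t,x)$ and remains opposite $y$, which requires choosing the wall beyond the segment $[o,r^{\xi_x}(t)]$ and on the correct side. One must also check that $g$ genuinely preserves $H=A\cap A'$ rather than merely mapping it into $A$. The latter holds because $g$ stabilises both $A$ and $A'$ and hence their intersection, but it needs to be stated carefully.
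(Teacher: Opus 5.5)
Your argument is correct, but it takes a genuinely different route from the paper's.

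The paper argues on the boundary. It uses that $\Fix_{\partial X}(g)=\partial\operatorname{Min}(g)$ with $\operatorname{Min}(g)\cong Y\times\mathbb R$, and splits into two cases. In the loxodromic case, the fixed chambers lie in a single apartment at infinity, which is a finite set. In the singular case, the axis endpoints are a point $a$ and an opposite line $B$. Every fixed vertex then lies on a Tits geodesic $a\bI M\bI q\bI B$, so $\calP^g\subseteq\{a\}\cup B^{\bI}$ and $\calL^g\subseteq a^{\bI}\cup\{B\}$, both nowhere dense. The paper concludes from the openness of $\pi_{\calP}$ and $\pi_{\calL}$.

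You work inside $X$ instead. Density of big cells gives fixed opposite pairs, hence $g$-stable flats on which $g$ is a non-zero translation. Thickness lets you branch the flat along walls to get further fixed chambers in the cone neighbourhood. This forces the translation vector to be parallel to two wall directions.

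What your route buys:
\begin{itemize}
\item It needs no case distinction, no Min-set splitting and no incidence combinatorics.
\item It shows more: a type-preserving isometry fixing a non-empty open set of chambers fixes a flat pointwise.
\item It applies unchanged in types $\widetilde C_2$ and $\widetilde G_2$.
\end{itemize}
The paper's route, in exchange, describes $\calP^g$ and $\calL^g$ explicitly. That description is what is reused in Proposition~\ref{prop:gq-fixed-incidence-hyperbolic}, where one needs $(\calP^g)^{\bI}$ and $(\calL^g)^{\bI}$ to be nowhere dense, not merely $\Fix_{\calF}(g)$.

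Two small repairs to your bookkeeping.
\begin{itemize}
\item As written, $U=U_o(t,x)$ is fixed before $A$ is defined, so $o$ need not lie in $A$. Choose $x,y\in U$ first, then $o\in A$ and $t$ with $U_o(t,x)\subseteq U$. This is allowed because the cone topology does not depend on the basepoint, and $y$ stays fixed because it lies in the original $U$.
\item What must cross the walls beyond distance $t$ is the ray $[o,\xi_x)$, not the sector. Since $\xi_x$ is not a vertex, this ray is transverse to all three wall directions and crosses walls of each direction arbitrarily far out. That is the clean justification for Step~3, and it also ensures that $o$ lies in the half-flat whose boundary contains $y$.
\end{itemize}
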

	
	\begin{proof}
		Since \(X\) is locally finite, it is proper as a \(\cat(0)\) space. For a
		hyperbolic isometry of a proper \(\cat(0)\) space, one has
		\[
		\Fix_{\partial X}(g)=\partial\operatorname{Min}(g),
		\]
		and
		\[
		\operatorname{Min}(g)\cong Y\times\mathbb R,
		\]
		where \(g\) acts by translation on the \(\mathbb R\)-factor, see for instance \cite[II.6]{bridson_haefliger99}.
		We distinguish two cases.
		
		First suppose that the endpoints of an axis of \(g\) lie in the interiors of opposite chambers of \(X^\infty\). Then \(g\) is loxodromic and \(\operatorname{Min}(g)\) is its unique translation apartment. Therefore \(\Fix_{\partial X}(g)\) is the boundary of this
		apartment. In particular, \(\Fix_{\calF}(g)\) is contained in the finite
		set of chambers of this apartment, and so has empty interior in \(\calF\).
		
		It remains to consider the singular case. Since \(X\) has type
		\(\widetilde A_2\), the endpoints of an axis of \(g\) are opposite
		vertices of the projective plane at infinity. Thus we may write them as a point \(a\in\calP\) and a line
		\(B\in\calL\), with \(a\) not incident to \(B\).
		
		Every fixed vertex at infinity lies in
		\(\partial\operatorname{Min}(g)\), hence lies on a Tits geodesic of
		length \(\pi\) joining \(a\) to \(B\). In a projective plane, such a
		geodesic has the form
		\[
		a \bI M \bI q \bI B,
		\]
		where \(M\) is a line through \(a\) and \(q\) is the unique point
		\(M\cap B\). Consequently,
		\[
		\calP^g\subseteq \{a\}\cup B^{\bI},
		\qquad
		\calL^g\subseteq a^{\bI}\cup\{B\}.
		\]
		The point row \(B^{\bI}\) and the pencil \(a^{\bI}\) are proper Schubert
		subsets of the compact topological projective plane, and hence have empty
		interior. Thus \(\calP^g\) has empty interior in \(\calP\), and
		\(\calL^g\) has empty interior in \(\calL\).
		
		Now suppose, for contradiction, that \(\Fix_{\calF}(g)\) has non-empty
		interior. Let \(U\subseteq\Fix_{\calF}(g)\) be a non-empty open set.
		Since the projections $\pi_{\calP}:\calF\to\calP $ and $ \pi_{\calL}:\calF\to\calL$ are open, \(\pi_{\calP}(U)\) and \(\pi_{\calL}(U)\) are non-empty open
		sets. But every chamber in \(U\) is fixed by \(g\), so $ \pi_{\calP}(U)\subseteq\calP^g$ and $\pi_{\calL}(U)\subseteq\calL^g$. This contradicts the fact that \(\calP^g\) and \(\calL^g\) have empty
		interior. Hence \(\Fix_{\calF}(g)\) has empty interior.
	\end{proof}

	\begin{lem}\label{lem:generic-opposite}
		Let $\calG$ be a topological projective plane in which all pencils and point rows have no isolated points. Let $g\in \Aut(\calG)$ be such that the fixed sets $\calJ^g$ have empty interior for $\calJ\in\{\calP,\calL,\calF\}$. Then the domestic locus $\calD(g)$ is closed with empty interior.
	\end{lem}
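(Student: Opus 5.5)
The plan is to split the domestic locus into two pieces, one controlled by points and one by lines, and to apply Lemma~\ref{lem:topology} to each piece through the projections $\pi_\calP$ and $\pi_\calL$.

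Closedness is immediate. By Lemma~\ref{lem open condition finite family} with $F=\{g\}$ (or directly from Lemma~\ref{lem totally opposite open sets} and continuity of $g$), the set $\calO(g)$ is open, so $\calD(g)=\calF\setminus\calO(g)$ is closed.

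For the interior, I first describe opposition combinatorially. In a projective plane, two chambers $(p,L)$ and $(p',L')$ are opposite exactly when they are at maximal distance in the incidence graph. This happens if and only if $p$ is not incident to $L'$ and $p'$ is not incident to $L$. Hence
\[
\calD(g)=A_1\cup A_2,\qquad A_1:=\{(p,L)\in\calF\mid gp\bI L\},\qquad A_2:=\{(p,L)\in\calF\mid p\bI gL\}.
\]
Each $A_i$ is closed. Indeed, $A_1$ is the preimage of the incidence relation, which is closed in $\calP\times\calL$ for a compact topological polygon, under the continuous map $(p,L)\mapsto(gp,L)$; the same argument applies to $A_2$. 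A finite union of closed sets with empty interior has empty interior, so it suffices to show that $A_1$ and $A_2$ each have empty interior.

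For $A_1$, I apply Lemma~\ref{lem:topology} to $f=\pi_\calP\colon\calF\to\calP$, which is continuous, open and surjective by Lemma~\ref{lem topo prop n gons}, with $Z_0=\calP^g$; this set has empty interior by hypothesis. Fix $p\notin\calP^g$. The fibre $\pi_\calP^{-1}(p)$ is identified with the pencil $p^{\bI}$ via $\pi_\calL$. Since $\pi_\calP$ is a locally trivial bundle, this identification is a homeomorphism onto the pencil with its subspace topology; I expect this to be the only point needing a brief check. Inside this fibre, $A_1$ consists of the lines through both $p$ and $gp\neq p$, so it is the single line $p\vee gp$. Because pencils have no isolated points, a singleton has empty interior in the fibre, and the lemma gives that $A_1$ has empty interior. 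For $A_2$, the argument is dual: I use $\pi_\calL$ with $Z_0=\calL^g$. For $L\notin\calL^g$, the fibre is the point row $L^{\bI}$, and $A_2$ meets it only in the single point $L\wedge gL$; since point rows have no isolated points, this has empty interior.

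Combining the two pieces shows that $\calD(g)$ is closed with empty interior. The argument only uses the hypotheses on $\calP^g$ and $\calL^g$; the hypothesis on $\calF^g$ is not needed here, although by Lemma~\ref{lem:top_free_chambers_implies_vertices} it implies the other two anyway. I do not expect any serious obstacle; the only delicate step is the fibre-topology identification mentioned above.
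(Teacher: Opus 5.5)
Your proof is correct and follows essentially the paper's argument: Lemma~\ref{lem:topology} is applied to a projection, using that $\calD(g)$ meets each fibre over a non-fixed vertex in a finite set. The only difference is that the paper does not split $\calD(g)$. It works with $\pi_\calP$ alone and rewrites $p\bI gL$ as $g^{-1}p\bI L$, so that for $p\notin\calP^g$ every $L$ with $(p,L)\in\calD(g)$ lies in $\{p\vee gp,\ p\vee g^{-1}p\}$. This needs only that $\calP^g$ has empty interior, and it avoids both the closedness of $A_1,A_2$ and the union step, whereas your version uses the hypotheses on both $\calP^g$ and $\calL^g$.
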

	\begin{proof}
		Since $\calD(g)=\calF\setminus\calO_\calF(g)$ and $\calO_\calF(g)$ is open by Lemma~\ref{lem open condition finite family}, the set $\calD(g)$ is closed. We prove that it has empty interior by applying Lemma~\ref{lem:topology} to the projection $\pi_\calP:\calF\to\calP$ and to $\calD(g)\subseteq\calF$. By Lemma~\ref{lem topo prop n gons}, the map $\pi_\calP$ is continuous, surjective and open.
		
		Let $p\in\calP\setminus\calP^g$. If $(p,L)\in\calD(g)$, then $(p,L)$ and $g(p,L)=(gp,gL)$ are not opposite. In a projective plane, two flags $(p,L)$ and $(q,M)$ fail to be opposite precisely when $p\bI M$ or $q\bI L$. Hence either $p\bI gL$, equivalently $g^{-1}p\bI L$, or $gp\bI L$. Since $p\neq gp$, this means that $L$ is one of the two lines $p\vee gp$ and $p\vee g^{-1}p$. Thus $\calD(g)\cap\pi_\calP^{-1}(p)$ is finite. Since the pencil $\pi_\calP^{-1}(p)$ has no isolated points, this finite subset has empty interior in the fibre.
		
		The exceptional set $\calP^g$ has empty interior by assumption. Lemma~\ref{lem:topology} therefore implies that $\calD(g)$ has empty interior.
	\end{proof}
	
	In particular, hyperbolic isometries of affine buildings of type $\tilde A_2$ have opposite dense geometry. We note that this is false in rank $\geq 3$:
	\begin{example} \label{ex:non-domestic}
		Let $\P$ be the projective space associated with the group $\mathrm {PGL}_d$, and consider the diagonal element $g=\mathrm {diag}(1,\dots,1,2)$. In this higher-dimensional setting flags are sequences $V_0\leq \dots \leq V_d$, where $V_i$ is a linear subspace of $\K^d$ of dimension $i$. Saying that such a flag is opposite to another flag $W_0\leq \dots \leq W_d$ amounts to saying that $V_i \cap W_{d-i}$ is trivial for each $i$.
		
		Assume that we have found a flag $V_0\leq \dots \leq V_d$ such that its image by $g$ is opposite to it. Thus $V_i\cap gV_{d-i}=0$, or equivalently $V_i+gV_{d-i}=\K^d$, for all $i$. Let $p$ denote the projection onto the first $d-1$ coordinates. It satisfies $pg=gp=p$. Hence $p(\K^d)=p(V_i+gV_{d-i})=p(V_i+V_{d-i})$.
		Choose $i$ with $i\leq d-i$. Since the flag is nested, $V_i\subseteq V_{d-i}$, and therefore $d-1=\dim p(\K^d)=\dim p(V_{d-i})\leq d-i$. Taking $i=2$ gives $d-1\leq d-2$ for every $d\geq4$, a contradiction. In other words, for $d\geq4$ such an opposite configuration cannot occur.
	\end{example}

	\begin{thm}[Open dense opposite geometry in projective planes]\label{thm:A2-open-dense-opposite-geometry}
		Let $\calG=(\calP,\calL,\calF)$ be a topological projective plane in which all pencils and point rows have no isolated points. Let $\Gamma\curvearrowright\calG$ be a type-preserving action by automorphisms. Assume that the action on $\calF$ is topologically free. Then, for every $\gamma\in\Gamma\setminus\{e\}$, the opposite geometry $\calO_\calF(\gamma)$ is open and dense in $\calF$.
	\end{thm}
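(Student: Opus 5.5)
Openness is routine: by Lemma~\ref{lem open condition finite family} applied to $F=\{\gamma\}$, the set $\calO_\calF(\gamma)$ is open. Strictly, that lemma is stated for isometries of $X$. Its proof, however, only uses Lemma~\ref{lem totally opposite open sets} (openness of opposition in $\calF\times\calF$) and the continuity of $\gamma$ on $\calF$, so it holds for any automorphism of the topological projective plane. Density is equivalent to the domestic locus $\calD(\gamma)=\calF\setminus\calO_\calF(\gamma)$ having empty interior, and this is the conclusion of Lemma~\ref{lem:generic-opposite}. So the task reduces to checking that lemma's hypotheses for each $\gamma\in\Gamma\setminus\{e\}$: we need $\calJ^\gamma$ to have empty interior for $\calJ\in\{\calP,\calL,\calF\}$.

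For $\calJ=\calF$ this is exactly topological freeness of the action on $\calF$. For $\calJ=\calP$ and $\calJ=\calL$ we invoke Lemma~\ref{lem:top_free_chambers_implies_vertices}. A topological projective plane arising here is compact, and pencils and point rows have no isolated points by assumption. The lemma then says that if $\calP^\gamma$ or $\calL^\gamma$ had non-empty interior, so would $\calF^\gamma$, contradicting topological freeness. If compactness is not part of the standing hypotheses, note that the proof of Lemma~\ref{lem:top_free_chambers_implies_vertices} never uses it. It only uses that $\pi_\calP,\pi_\calL$ are continuous (Lemma~\ref{lem topo prop n gons}), that point rows have no isolated points, and that two distinct points lie on a unique line; all of these hold here.

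With all three fixed sets having empty interior, Lemma~\ref{lem:generic-opposite} gives that $\calD(\gamma)$ is closed with empty interior, hence $\calO_\calF(\gamma)$ is open and dense. The key computation inside that lemma is the fibrewise finiteness of the domestic locus over non-fixed points $p$: only the lines $p\vee\gamma p$ and $p\vee\gamma^{-1}p$ occur. That computation is already done, so there is no real obstacle. The only point needing care is that the auxiliary lemmas are applied to a general topological projective plane rather than one coming from a building, and their proofs use nothing beyond the incidence axioms and the topological properties assumed in the statement.
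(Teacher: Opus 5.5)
Your proposal is correct and follows the paper's proof exactly. Topological freeness gives that $\calF^\gamma$ has empty interior, Lemma~\ref{lem:top_free_chambers_implies_vertices} transfers this to $\calP^\gamma$ and $\calL^\gamma$, and Lemma~\ref{lem:generic-opposite} then shows that $\calD(\gamma)$ is closed with empty interior. Your side remark, that the auxiliary lemmas are stated for isometries of $X$ or for compact polygons yet only use continuity, openness of opposition and the incidence axioms, is accurate and makes explicit a point the paper leaves implicit.
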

	\begin{proof}
		Fix $\gamma\in\Gamma\setminus\{e\}$. By topological freeness on chambers, $\calF^\gamma$ has empty interior. By Lemma~\ref{lem:top_free_chambers_implies_vertices}, the fixed sets $\calP^\gamma$ and $\calL^\gamma$ have empty interior as well. Lemma~\ref{lem:generic-opposite} then gives that $\calD(\gamma)$ is closed with empty interior. Therefore $\calO_\calF(\gamma)=\calF\setminus\calD(\gamma)$ is open and dense.
	\end{proof}
	
	\begin{thm}[Strong non-domesticity in type $\tilde A_2$]\label{thm A2 strong domestic}
		Let $X$ be a locally finite thick affine building of type $\tilde A_2$, and let $\Gamma<\iso(X)$ be a subgroup of general type acting properly and type-preservingly. Assume that $\Lambda_\calF(\Gamma)=\chinf$. Then the action $\Gamma\curvearrowright\calF$ is strongly non-domestic.
	\end{thm}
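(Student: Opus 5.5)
Since the flag limit set is all of $\chinf=\calF$, strong non-domesticity reduces to showing that for every finite $F\subseteq\Gamma\setminus\{e\}$ the set $\Opp_\calF(F)=\bigcap_{f\in F}\calO_\calF(f)$ is non-empty. The plan is to prove that each $\calO_\calF(f)$ is open and dense in $\calF$ and then apply the Baire category theorem. Since $\calF$ is compact metrizable by Proposition~\ref{prop topological spherical building}, a finite intersection of open dense sets is again open and dense, hence non-empty.

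To obtain density of each $\calO_\calF(f)$, I will apply Theorem~\ref{thm:A2-open-dense-opposite-geometry} to the action $\Gamma\curvearrowright\calG$. This requires two hypotheses.

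\emph{No isolated points.} Every pencil and point row of the projective plane at infinity must have no isolated points. I expect this to follow from thickness and local finiteness of $X$. A point row $L^{\bI}$ is identified with the set of chambers at infinity containing a given panel, or with the boundary of a tree (a panel tree / wall tree) that is thick and locally finite, hence a Cantor set. If a more elementary route is wanted, one can use the cone topology directly: the neighbourhood $U_o(t,x)$ of a chamber $x$ contains infinitely many chambers sharing the face of $x$, obtained by branching at walls beyond distance $t$, which uses thickness.

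\emph{Topological freeness.} The action on $\calF$ must be topologically free. This is exactly Proposition~\ref{prop top free general type}: $\Gamma$ is of general type and acts properly, so the action on $\Lambda_\calF(\Gamma)=\calF$ is topologically free.

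The theorem then yields, for each $f\in F$, that $\calO_\calF(f)$ is open and dense. Openness of the finite intersection is also recorded in Lemma~\ref{lem open condition finite family}.

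Alternatively, one can avoid topological freeness for elements that are hyperbolic. Since $\Gamma$ acts properly on a locally finite building, every non-trivial element is either elliptic, with finite order and a fixed point, or hyperbolic, because isometries of a locally finite building are semisimple. For hyperbolic elements, Proposition~\ref{prop:A2_hyperbolic_no_open_fixed_flags} combined with Lemma~\ref{lem:top_free_chambers_implies_vertices} and Lemma~\ref{lem:generic-opposite} gives the same conclusion. Torsion elliptic elements, however, still need topological freeness, so the cleanest route is through Proposition~\ref{prop top free general type}.

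The main obstacle is checking the no-isolated-points hypothesis rigorously in the cone topology. Everything else is a direct citation followed by Baire category.
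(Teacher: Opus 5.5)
Your proposal is correct and follows the paper's proof essentially verbatim. Topological freeness on $\calF=\Lambda_\calF(\Gamma)$ comes from Proposition~\ref{prop top free general type}, and the absence of isolated points in rows and pencils from thickness and local finiteness (the paper simply asserts this). Theorem~\ref{thm:A2-open-dense-opposite-geometry} then makes each $\calO_\calF(f)$ open and dense, and one intersects finitely many such sets. Baire is not actually needed at that last step, since a finite intersection of open dense subsets is open and dense in any topological space.
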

	\begin{proof}
		Since $\Lambda_\calF(\Gamma)=\chinf$, Proposition~\ref{prop top free general type} gives topological freeness of the $\Gamma$-action on $\calF$. The projective plane at infinity has no isolated points in its rows and pencils, since it is the boundary at infinity of a locally finite thick affine building. Hence Theorem~\ref{thm:A2-open-dense-opposite-geometry} applies and shows that $\calO_\calF(f)$ is open and dense in $\calF$ for every $f\in\Gamma\setminus\{e\}$.
		
		Let $F\subseteq\Gamma\setminus\{e\}$ be finite. Then $\Opp_\calF(F)=\bigcap_{f\in F}\calO_\calF(f)$ is a finite intersection of open dense subsets of the compact Hausdorff space $\calF$. Hence $\Opp_\calF(F)$ is dense, and in particular non-empty. Since $\Lambda_\calF(\Gamma)=\calF$, this proves that $\Opp_\calF(F)$ intersects $\Lambda_\calF(\Gamma)$. Thus the action is strongly non-domestic.
	\end{proof}
	
	\section{Strong non-domesticity in generalized quadrangles}
	\label{sec:strong-nondomesticity-gq}
	The goal of this section is to prove strong non-domesticity for groups of general type acting on $\tilde C_2$-buildings with full flag limit set. Let $\calG=(\calP,\calL,\calF)$ be a compact generalized quadrangle, arising as the spherical building at infinity of a locally finite affine building of type $\tilde C_2$.  We keep the convention introduced above that collinearity and concurrency are reflexive: a point is collinear with itself and a line is concurrent with itself.  Hence two points are opposite if and only if they are not collinear, and two lines are opposite if and only if they are not concurrent.
	
	Observe that if $C=(p,L)$ and $D=(q,M)$ are chambers of a generalized quadrangle, then
	\begin{equation}\label{eq:gq-chamber-opposition}
		C\text{ is opposite }D
		\quad\Longleftrightarrow\quad
		p\text{ is opposite }q\text{ and }L\text{ is opposite }M.
	\end{equation}
	Equivalently, $C$ and $D$ are not opposite if and only if $p$ is collinear with $q$ or $L$ is concurrent with $M$. For a type-preserving automorphism $g\in\Aut(\calG)$, set $N_{\calP}(g):=\{p\in\calP\mid p\text{ is collinear with }gp\}$ and $N_{\calL}(g):=\{L\in\calL\mid L\text{ is concurrent with }gL\}$. These are the point and line vertices which are not sent to opposite vertices. Since the collinearity and concurrency relations are closed in compact topological polygons, $N_{\calP}(g)$ and $N_{\calL}(g)$ are closed.
	
	We shall also use the following consequences of Kramer's topology of compact polygons.
	\begin{lem}\label{lem:gq-no-isolated-schubert}
		Let $\calG=(\calP,\calL,\calF)$ be a non-discrete compact topological generalized quadrangle. Then point rows and pencils of lines have no isolated points. Moreover, for every point $p$ and every line $L$, the subsets \(p^{\bI}\), \(L^{\bI}\), \(p^\perp\), and \(L^\perp\) are closed with empty interior in the corresponding point or line space.
	\end{lem}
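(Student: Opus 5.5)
The plan is to rely on Kramer's structure theory of compact polygons. The key fact is that in a non-discrete compact connected-or-not topological generalized polygon, every point row $L^{\bI}$ and pencil $p^{\bI}$ is homeomorphic to every other one of the same type. This holds because projections between opposite elements are homeomorphisms (Kramer, Grundhöfer--Knarr--Kramer). Point rows and pencils of lines are also homeomorphic to each other through such projections.

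\textbf{No isolated points.} Suppose some point row $L^{\bI}$ had an isolated point. By homogeneity, every point row would then be discrete, and a discrete closed subset of a compact space is finite. The same would follow for pencils, since a pencil $p^{\bI}$ is homeomorphic to the point row of any line opposite a line through $p$, via a projection map. With finite rows and pencils, $\calP$ and $\calL$ would be finite. Indeed, $\pi_\calP$ is a locally trivial bundle with finite fibres (Lemma~\ref{lem topo prop n gons}), and every point lies at bounded distance from a fixed point. Finiteness would then make the quadrangle discrete, contradicting non-discreteness.

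\textbf{Closedness.} The sets $p^{\bI}$ and $L^{\bI}$ are closed because incidence (the flag set $\calF$) is closed in $\calP\times\calL$. The sets $p^\perp$ and $L^\perp$ are closed because collinearity and concurrency are closed relations, as already noted in the text. Equivalently, $p^\perp$ is the image under $\pi_\calP$ of the compact set $\pi_\calL^{-1}(p^{\bI})$.

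\textbf{Empty interior.} It suffices to treat $p^\perp$, since $p^{\bI}\subseteq$ (a line-space analogue) is handled dually and $L^{\bI}\subseteq L^\perp$-type containments reduce to it. In fact $L^{\bI}\subseteq q^\perp$ for any $q\bI L$, so it is enough to prove that $p^\perp$ and $L^\perp$ have empty interior.

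Suppose $U\subseteq p^\perp$ is a non-empty open set of points. By Lemma~\ref{lem opp ch dense}, or more directly by the density of opposite vertices (which follows from it via the open maps $\pi_\calP$ and $\pi_\calL$), the set of points opposite $p$ is dense in $\calP$. Hence $U$ contains a point opposite $p$, that is, a point not collinear with $p$. This contradicts $U\subseteq p^\perp$.

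Explicitly, take a chamber $x=(p,M)$. The big cell $\calO(x)$ is dense, and its image under the open map $\pi_\calP$ is dense. Every chamber opposite $x$ has its point opposite $p$, by \eqref{eq:gq-chamber-opposition}. The dual argument for $L^\perp$ uses $\pi_\calL$.

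\textbf{Main obstacle.} The delicate step is the no-isolated-points claim. It requires citing correctly that projections between opposite rows and pencils are homeomorphisms in compact polygons, and that finiteness of rows forces discreteness of $\calG$. I would cite Kramer's thesis \cite{kramer90} for both facts. In our application the polygon arises at infinity of a thick locally finite building, so rows are Cantor sets and this step is immediate anyway.
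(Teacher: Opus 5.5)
The second half of your proposal (closedness and empty interior) is correct and is essentially the paper's argument. There, $p^\perp$ is the complement of the open dense set of points opposite $p$, and the inclusions $L^{\bI}\subseteq q^\perp$ and $p^{\bI}\subseteq M^\perp$ reduce the other cases to this one. Your way of obtaining density of opposite points is a good one: push the dense big cell $\calO((p,M))$ forward under the continuous surjection $\pi_\calP$ and use \eqref{eq:gq-chamber-opposition}. This fills in a step the paper leaves implicit, since Lemma~\ref{lem opp ch dense} is only stated for chambers.

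The gap is in the first half, which the paper simply takes from \cite[Proposition~2.2.6]{kramer90}. Two of the steps you wrote fail.

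\emph{Rows versus pencils.} In a generalized quadrangle there is no projection between a pencil and a point row. Opposition only pairs elements of the same type. If $p$ is not on $N$, exactly one line through $p$ meets $N$, so $p^{\bI}\to N^{\bI}$ is not a bijection. Rows and pencils need not even have the same cardinality, as finite quadrangles of order $(s,t)$ with $s\neq t$ show. The transfer you describe is the projective-plane picture.

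\emph{Homogeneity.} Knowing that all point rows are pairwise homeomorphic does not turn one isolated point of $L^{\bI}$ into discreteness of $L^{\bI}$. For that you need each individual row to be a homogeneous space.

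Both points can be repaired as follows.
\begin{enumerate}
\item Given $x\neq y$ on $L$, choose $M$ opposite $L$ and let $x'$ be the point of $M$ collinear with $x$. Then $x'$ is not collinear with $y$.
\item Take a line $K\ni y$ different from $L$ and from the line through $y$ meeting $M$. Let $w$ be the point of $K$ collinear with $x'$; one checks that $w\notin L\cup M$.
\item Every line $N\ni w$ other than $K$ and $w\vee x'$ is then opposite both $L$ and $M$. The composite of projections $L\to M\to N\to L$ sends $x\mapsto x'\mapsto w\mapsto y$.
\item Projections between opposite lines are homeomorphisms, so each row is homogeneous. Hence a row with one isolated point is discrete, hence finite. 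Kramer's proposition then forces $\calG$ to be discrete, a contradiction.
\end{enumerate}
Pencils are treated dually using opposite points, so no row-to-pencil transfer is needed at all.

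Your closing remark that rows are Cantor sets only covers the application. It does not prove the lemma for an arbitrary non-discrete compact quadrangle.
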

	\begin{proof}
		By \cite[Proposition~2.2.6]{kramer90}, discreteness of one row or one pencil in a compact topological polygon is equivalent to discreteness of the polygon. Since $\calG$ is non-discrete, no row or pencil is discrete, and in particular no row or pencil has an isolated point. The last assertion is a consequence of the fact that for any element $x \in \calJ$, with $\calJ \in \{\calP,\calL,\calF\}$, the set of opposite elements $\calO(x) $ is open \cite[Theorem~2.1.3]{kramer90} and dense in $\calJ$ by Lemma~\ref{lem opp ch dense}. 
	\end{proof}
	
	The next lemma is inspired by \cite[Lemma~3.5]{temmermans_thas_vanmaldeghem12}. Their argument is global, but the following version only uses an open family of non-opposite lines. Recall that for $g \in \Aut(\calG) $ and $\calJ \in \{\calP,\calL,\calF\}$, we denote by $\calJ^g$ the set of elements in $\calJ$ that are fixed by $g$. 
	
	\begin{lem}\label{lem:gq-local-fixed-incidence}
		Let $\calG$ be a non-discrete compact generalized quadrangle, and let $g\in\Aut(\calG)$ be type-preserving. Then
		\(\operatorname{Int}_{\calL} N_{\calL}(g)\subseteq (\calP^g)^{\bI}\). Dually, \(\operatorname{Int}_{\calP} N_{\calP}(g)\subseteq (\calL^g)^{\bI}\).
	\end{lem}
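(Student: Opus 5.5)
The plan is to take a line $L\in\operatorname{Int}_{\calL}N_{\calL}(g)$ and an open set $U\subseteq N_{\calL}(g)$ containing $L$, and to produce a $g$-fixed point on $L$. The only combinatorial input is that the incidence graph of a generalized quadrangle has girth $8$. In particular it has no closed circuits of length $6$, so no three pairwise distinct lines are pairwise concurrent at three distinct points. The point statement is dual and follows by exchanging the roles of points and lines. We split according to whether $gL=L$.

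\emph{Case $gL\neq L$.} Since $L\in N_{\calL}(g)$, the lines $L$ and $gL$ are concurrent, so they meet in a unique point $p:=L\wedge gL$. We claim that $gp=p$. By Lemma~\ref{lem:gq-no-isolated-schubert}, the pencil $p^{\bI}$ has no isolated points, and $U\cap p^{\bI}$ is an open neighbourhood of $L$ in $p^{\bI}$. Hence we can choose a line $M\in U$ with $p\bI M$ and $M\neq L$. We may also arrange $M\neq gL$, since $U\cap p^{\bI}$ is infinite. Suppose, for contradiction, that $gp\neq p$. Note that $gp\bI gL$, because $p\bI L$, and that $gp\bI gM$.

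Since $M\in U$, the lines $M$ and $gM$ are concurrent; let $y$ be a common point. Consider the closed path
\[
p \bI M \bI y \bI gM \bI gp \bI gL \bI p ,
\]
which has length $6$. By the girth condition it must degenerate. We rule out each degeneracy in turn.
\begin{itemize}
\item If $y=p$, then $gM$ contains both $p$ and $gp$, which are distinct points of $gL$. Hence $gM=gL$, so $M=L$, a contradiction.
\item If $y=gp$, then $M$ contains both $p$ and $gp$. Hence $M=gL$, which was excluded.
\item If $M=gM$, then $M$ contains both $p$ and $gp$, so again $M=gL$, which was excluded.
\item The remaining coincidences reduce to the previous ones, because $p\neq gp$ and $L\neq gL$.
\end{itemize}
Therefore $gp=p$, so $p\in\calP^g$ and $L\in(\calP^g)^{\bI}$.

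\emph{Case $gL=L$.} We show that $g$ fixes a point of $L$, by the same argument run on a point of $L$. Pick any point $x\bI L$, and a line $M\in U$ through $x$ with $M\neq L$; this is possible by non-isolatedness of $x^{\bI}$. Suppose $gx\neq x$. Then $gx\bI gL=L$. Let $y$ be a common point of $M$ and $gM$, and consider the closed path
\[
x \bI M \bI y \bI gM \bI gx \bI L \bI x ,
\]
again of length $6$. The degeneracies are handled as before.
\begin{itemize}
\item If $y=x$, then $gM$ contains both $x$ and $gx$, so $gM=L=gL$ and hence $M=L$, a contradiction.
\item If $y=gx$, then $M$ contains both $x$ and $gx$, so $M=L$, a contradiction.
\item If $M=gM$, then $M$ contains both $x$ and $gx$, so $M=L$, a contradiction.
\end{itemize}
Thus $gx=x$, and in fact every point of $L$ is fixed. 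In either case $L\in(\calP^g)^{\bI}$.

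The main obstacle is the bookkeeping of degenerate circuits: we must make sure that every way the length-$6$ path can collapse forces $M\in\{L,gL\}$. This is exactly why we choose $M\in U$ distinct from $L$ and $gL$, and that choice in turn uses Lemma~\ref{lem:gq-no-isolated-schubert}, via the absence of isolated points in pencils. Openness of $U$ is needed only to ensure that such a line $M$ lies in $N_{\calL}(g)$, which is what makes the argument local, in contrast to the global argument of \cite{temmermans_thas_vanmaldeghem12}.
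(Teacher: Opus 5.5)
Your proof is correct and is essentially the paper's argument. It uses the same case split on whether $gL=L$, the same choice of a second line $M\in U$ through $L\wedge gL$ (respectively through an arbitrary point of $L$), and the same contradiction with girth $8$ via the closed path $p \bI M \bI y \bI gM \bI gp \bI gL \bI p$. One small correction to your catch-all bullet: the unlisted coincidence $gM=gL$ is excluded by your choice $M\neq L$, not by $p\neq gp$ or $L\neq gL$.
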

	\begin{proof}
		We prove the statement for lines, and the statement for points is obtained by duality. Let $U\subseteq N_{\calL}(g)$ be open and let $L\in U$. We shall show that $L$ is incident to a fixed point.
		
		Assume first that $L\neq gL$, and put \(x:=L\wedge gL\). Thus $x$ is the unique point incident with both $L$ and $gL$. We claim that $x$ is fixed. Suppose not, so that $gx\neq x$. The set $U\cap x^{\bI}$ is a non-empty open subset of the pencil $x^{\bI}$. By Lemma~\ref{lem:gq-no-isolated-schubert}, this pencil has no isolated points; in particular, $U\cap x^{\bI}$ is not contained in any prescribed finite subset. Choose \(M\in U\cap x^{\bI}\) with \(M\notin\{L,gL,g^{-1}L\}\).
		Since $M\in N_{\calL}(g)$, the lines $M$ and $gM$ are concurrent. We first note that $M\neq gM$. Indeed, if $M=gM$, then $M$ contains both $x$ and $gx$. As the line $gL$ also contains both $x$ and $gx$ and $x\neq gx$, we obtain by uniqueness that $x \vee gx = M=gL$, contradicting the choice of $M$.
		
		Let \(y:=M\wedge gM\). The incidence graph then contains the cycle \[x\bI M\bI y\bI gM \bI gx \bI gL \bI x.\]
		We check that it is non-degenerate. The three lines $M$, $gM$ and $gL$ are pairwise distinct. Indeed $M\neq gM$ as shown above, $M\neq gL$ by choice of $M$, and $gM\neq gL$ because otherwise $M=L$. Also $y\neq x$, for otherwise $gM$ would contain both $x$ and $gx$, hence $gM=gL$. Finally $y\neq gx$, for otherwise $M$ would contain both $x$ and $gx$, hence $M=gL$. Thus we have obtained a genuine $6$-cycle in the incidence graph, contradicting that a generalized quadrangle has girth $8$. Therefore $gx=x$, and $L$ contains a fixed point.
		
		Assume now that $L=gL$. We show that every point incident to $L$ is fixed. Let $x\in L$, and suppose $gx\neq x$. Choose \(M\in U\cap x^{\bI}\), with \(M\neq L\), which is possible by Lemma~\ref{lem:gq-no-isolated-schubert}. As above, $M\neq gM$. Indeed, otherwise both $M$ and $L$ would contain both $x$ and $gx$, forcing $M=L$. Since $M\in U\subseteq N_{\calL}(g)$, the lines $M$ and $gM$ are concurrent. Set $y=M\wedge gM$. Then \[x\bI M\bI y\bI gM \bI gx \bI gL \bI x\] is again a non-degenerate $6$-cycle. Indeed, the possible degeneracies would force either $M=L$ or $gM=L$ as before. Again, this leads to a contradiction with the girth, and $gx=x$. Therefore every point of $L$ is fixed, and in particular $L$ contains a fixed point.
		
		We have shown that every line in $U$ belongs to $(\calP^g)^{\bI}$. Taking $U=\operatorname{Int}_{\calL} N_{\calL}(g)$ gives the desired inclusion.
	\end{proof}
	
	For opposite points $a,b\in\calP$, define their common-neighbour set \(Q(a,b):=a^\perp\cap b^\perp \subseteq \calP\).
	The following lemma replaces the finite-fibre argument used in projective planes.
	
	\begin{lem}\label{lem:gq-common-neighbour-incidence}
		Let $\calG$ be a non-discrete compact topological generalized quadrangle and let $a,b\in\calP$ be opposite points. Then \(Q(a,b)^{\bI}\) is closed with empty interior in $\calL$. Dually, if $L, M\in\calL$ are opposite lines, then the set $(L^\perp \cap M^\perp)^{\bI}$ is closed with empty interior in $\calP$.
	\end{lem}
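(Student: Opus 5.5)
The plan is to handle closedness by compactness and then get empty interior from the fibre criterion of Lemma~\ref{lem:topology}, applied twice to reduce to a statement about a single pencil. We treat only the statement for points; the one for lines is dual.

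\emph{Closedness.} Collinearity is a closed relation in a compact polygon. Hence $a^\perp$, $b^\perp$ and $Q:=Q(a,b)$ are closed, and so compact, in $\calP$. Then
\[
Q^{\bI}=\pi_\calL\bigl(\pi_\calP^{-1}(Q)\bigr)
\]
is the continuous image of a closed, hence compact, subset of $\calF$, so it is compact and therefore closed in the Hausdorff space $\calL$. We also record the structure of $Q$. Since $a$ and $b$ are opposite, every line $M\bI a$ contains exactly one point of $Q$, namely the projection $\operatorname{proj}_M(b)$. The map $a^{\bI}\to Q$, $M\mapsto\operatorname{proj}_M(b)$, is therefore a continuous bijection of compact spaces, hence a homeomorphism. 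Moreover $Q\subseteq a^\perp$, so $Q$ has empty interior by Lemma~\ref{lem:gq-no-isolated-schubert}.

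\emph{Empty interior, first reduction.} Suppose $V\subseteq Q^{\bI}$ is non-empty open. Then $A:=\pi_\calL^{-1}(Q^{\bI})$ has non-empty interior in $\calF$. We apply Lemma~\ref{lem:topology} to $\pi_\calP:\calF\to\calP$ with exceptional set
\[
Z_0:=a^\perp\cup b^\perp,
\]
which is closed with empty interior by Lemma~\ref{lem:gq-no-isolated-schubert} and the fact that opposite sets are open and dense. Fix $p$ opposite both $a$ and $b$. A line $N\bI p$ lies in $A$ exactly when $N$ meets $Q$. Since $a\notin N$, the only candidate point is $\operatorname{proj}_N(a)$. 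Using the homeomorphism $p^{\bI}\cong Q(a,p)\cong a^{\bI}$, $N\mapsto\operatorname{proj}_N(a)$, the fibre $A\cap p^{\bI}$ is identified with
\[
E_p:=\{M\in a^{\bI}\mid \operatorname{proj}_M(p)=\operatorname{proj}_M(b)\}.
\]
So it suffices to show that $E_p$ has empty interior in the pencil $a^{\bI}$ for $p$ in a set whose complement has empty interior.

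\emph{The main obstacle} is this generic-fibre statement, since $E_p$ need not be finite: regular triads can have many centres. I would again apply Lemma~\ref{lem:topology}, this time to the set $B:=\{(p,M)\in\calP\times a^{\bI}\mid \operatorname{proj}_M(p)=\operatorname{proj}_M(b)\}$.
\begin{itemize}
\item For fixed $M$, the $p$-fibre of $B$ is contained in $(\operatorname{proj}_M b)^\perp$, which has empty interior.
\item $B$ is closed, since projections are continuous on the open locus where they are defined.
\item $\calP$ and $a^{\bI}$ are compact metrizable, so a Kuratowski--Ulam type argument (category version of Fubini) shows that $B$ is meagre in $\calP\times a^{\bI}$.
\item Consequently $E_p$ is meagre, hence has empty interior, for a comeagre set of $p$.
\end{itemize}
Because $\pi_\calP(\operatorname{Int}A)$ is open, it contains such a generic $p$, and we obtain the contradiction exactly as in the proof of Lemma~\ref{lem:topology}.

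If the category argument proves awkward, the first fallback is a direct perturbation. Starting from $(p,M)$ with $\operatorname{proj}_M p=\operatorname{proj}_M b$, move $p$ inside $\operatorname{proj}_M(b)^\perp$'s complement, using density of opposite sets, while keeping $M$ in an open set. This should force a contradiction with openness in the same way the girth argument was used in Lemma~\ref{lem:gq-local-fixed-incidence}.
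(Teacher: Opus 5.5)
Your argument is correct in substance, but it takes a longer route than the paper. The closedness part is the same as in the paper.

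\textbf{How the paper gets empty interior.} The paper never fibres over an arbitrary point $p$. It uses the map $\beta\colon\calL\setminus a^{\bI}\to a^\perp\setminus\{a\}$ sending a line to its unique point collinear with $a$, which is open by Kramer. It observes that $Q(a,b)^{\bI}\setminus a^{\bI}=\beta^{-1}(Q(a,b))$. This reduces everything to showing that $Q(a,b)$ has empty interior \emph{relative to} $a^\perp\setminus\{a\}$, which is stronger than the empty interior in $\calP$ that you record. That relative statement follows from Lemma~\ref{lem:topology} applied to $q\mapsto a\vee q$: its fibres $L^{\bI}\setminus\{a\}$ each meet $Q(a,b)$ in exactly one point. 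All fibres are singletons, so no genericity is needed.

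\textbf{What your route requires.} By fibring $\calF$ over $\calP$ instead, your fibres are $E_p\cong Q(a,b)\cap p^\perp$, the centres of the triad $\{a,b,p\}$. These really can be the whole pencil. In the real symplectic quadrangle, every $p\neq a,b$ on the projective line through $a$ and $b$ satisfies $Q(a,b)\subseteq p^\perp$, so $E_p=a^{\bI}$. So on your route the category step is unavoidable, and it does work. In exchange, you only need openness of $\pi_\calP$ and continuity of the perspectivity $p^{\bI}\to a^{\bI}$, rather than openness of Kramer's maps $\alpha$ and $\beta$.

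\textbf{The one loose step.} Your justification that $B$ is closed does not work as stated, because $\operatorname{proj}_M(p)$ is undefined when $p\bI M$. You need neither closedness nor Kuratowski--Ulam, however.
\begin{enumerate}
\item For $p$ opposite $a$, one has $M\in E_p$ if and only if $p\perp\operatorname{proj}_M(b)$.
\item Let $(V_k)$ be a countable base of non-empty open subsets of $a^{\bI}$. The set of such $p$ with $V_k\subseteq E_p$ lies in $\bigcap_{M\in V_k}(\operatorname{proj}_M b)^\perp$.
\item That intersection is closed and is contained in a single set $(\operatorname{proj}_M b)^\perp$ with empty interior, so it is nowhere dense.
\item Hence the exceptional $p$ form a meagre set. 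Take $Z_0$ to be its union with $a^\perp\cup b^\perp$; it has empty interior since $\calP$ is a Baire space.
\end{enumerate}
The fallback perturbation paragraph is unnecessary, and as written it is not an argument.
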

	\begin{proof}
		As before we only prove the statement for points. Consider the map
		\[
		\alpha:q \in a^\perp\setminus\{a\}\longmapsto a\vee q \in a^{\bI},
		\]
		where $a\vee q$ is the unique line through the distinct collinear points $a$ and $q$. By \cite[Lemma~2.1.6]{kramer90}, this is an open surjection whose fibre over a line $L\in a^{\bI}$ is $L^{\bI}\setminus\{a\}$.
		
		Fix $L \in a^{\bI}$. Since $a$ and $b$ are opposite, $b$ is not incident with $L$. The generalized quadrangle axiom gives a unique point $q_L\in L^{\bI}$ collinear with $b$, and $q_L\neq a$. Hence \(Q(a,b)\cap\alpha^{-1}(L)=\{q_L\}\).
		By Lemma~\ref{lem:gq-no-isolated-schubert}, the singleton $\{q_L\}$ has empty interior in the fibre $L^{\bI}\setminus\{a\}$. Lemma~\ref{lem:topology} therefore gives
		\begin{equation}\label{eq:Q-empty-interior}
			\operatorname{Int}_{a^\perp\setminus\{a\}}Q(a,b)=\emptyset.
		\end{equation}
		
		Next consider
		\[
		\beta: M \in \calL\setminus a^{\bI}\longmapsto \beta(M) a^\perp\setminus\{a\},
		\]
		where $\beta(M) = L^{\bI} \cap a^\perp$ is the unique point of $M$ collinear with $a$. This is the dual form of the $n=4$ bundle in \cite[Lemma~2.1.7]{kramer90}, and hence $\beta$ is open. For \(M\notin a^{\bI}\), uniqueness in the generalized quadrangle gives \(M\in Q(a,b)^{\bI}\) if and only if \(\beta(M)\in Q(a,b)\). Thus \(Q(a,b)^{\bI}\cap(\calL\setminus a^{\bI})=\beta^{-1}(Q(a,b))\).
		If the right-hand side contained a non-empty open set, its image under the open map $\beta$ would be a non-empty open subset of $Q(a,b)$, contradicting \eqref{eq:Q-empty-interior}. Therefore $Q(a,b)^{\bI}\cap(\calL\setminus a^{\bI})$ has empty interior in $\calL\setminus a^{\bI}$. Since the pencil $a^{\bI}$ is closed with empty interior by Lemma~\ref{lem:gq-no-isolated-schubert}, the set $Q(a,b)^{\bI}$ has empty interior in $\calL$.
		
		Finally, $Q(a,b)$ is closed, hence compact, the incidence relation is closed by \cite[Proposition~2.1.12]{kramer90}. Therefore \(\{(q,M)\in\calF\mid q\in Q(a,b)\}\) is compact, and its projection to $\calL$ is precisely $Q(a,b)^{\bI}$. Since $\calL$ is Hausdorff, $Q(a,b)^{\bI}$ is closed.
	\end{proof}
	
	We now aim at controlling the incidence images of the fixed vertices of a hyperbolic isometry in an affine building. 
	
	\begin{prop}\label{prop:gq-fixed-incidence-hyperbolic}
		Let $X$ be a thick locally finite affine building of type $\tilde C_2$. Let $g\in\iso(X)$ be type-preserving and hyperbolic. Then \((\calP^g)^{\bI}\) and \((\calL^g)^{\bI}\) are closed with empty interior.
	\end{prop}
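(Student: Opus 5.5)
The plan is to follow the proof of Proposition~\ref{prop:A2_hyperbolic_no_open_fixed_flags}: identify the fixed vertices at infinity through the minimal set of $g$, and then control their incidence images using Lemma~\ref{lem:gq-no-isolated-schubert} and Lemma~\ref{lem:gq-common-neighbour-incidence}.

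\emph{Closedness.} This part is uniform. $\calP^g$ and $\calL^g$ are fixed sets of a homeomorphism of a compact Hausdorff space, so they are closed, hence compact. The incidence relation is closed by \cite[Proposition~2.1.12]{kramer90}. Therefore $\{(q,M)\in\calF\mid q\in\calP^g\}$ is compact. Its image under $\pi_\calL$ is $(\calP^g)^{\bI}$, which is therefore closed. The same argument applies to $(\calL^g)^{\bI}$. It remains to show empty interior, and for this it suffices to place each set inside a finite union of closed sets with empty interior; such a union again has empty interior.

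\emph{Structure of the fixed set.} As in the $\widetilde A_2$ case, $X$ is proper $\cat(0)$, so $\Fix_{\partial X}(g)=\partial\operatorname{Min}(g)$ and $\operatorname{Min}(g)\cong Y\times\R$. I distinguish two cases according to the endpoints $\xi^\pm$ of an axis.

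\emph{Loxodromic case.} If $\xi^\pm$ lie in the interiors of chambers, then $g$ is loxodromic and $\operatorname{Min}(g)$ is a single apartment. Hence $\calP^g$ and $\calL^g$ are finite, consisting of the vertices of one apartment at infinity. Then $(\calP^g)^{\bI}$ is a finite union of pencils $p^{\bI}$, and $(\calL^g)^{\bI}$ is a finite union of rows $L^{\bI}$. Both have empty interior by Lemma~\ref{lem:gq-no-isolated-schubert}.

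\emph{Singular case.} Otherwise $\xi^\pm$ are vertices of $X^\infty$. Since $\xi^+$ and $\xi^-$ are at Tits distance $\pi$, they are opposite. In a quadrangle, opposite vertices have the same type, so by duality we may assume $\xi^+=a$ and $\xi^-=b$ are opposite points.

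The key geometric input is that $\partial\operatorname{Min}(g)=\partial(Y\times\R)$ is the spherical join of $\partial Y$ with $\{a,b\}$. Consequently every fixed vertex lies on a Tits geodesic of length $\pi$ from $a$ to $b$. In the incidence graph such a geodesic has length $4$ and the form $a\bI L\bI q\bI M\bI b$. This gives
\[
\calP^g\subseteq\{a,b\}\cup Q(a,b),\qquad \calL^g\subseteq a^{\bI}\cup b^{\bI}.
\]
Applying the incidence operation yields
\[
(\calP^g)^{\bI}\subseteq a^{\bI}\cup b^{\bI}\cup Q(a,b)^{\bI},\qquad (\calL^g)^{\bI}\subseteq a^\perp\cup b^\perp .
\]
The pencils $a^{\bI}$, $b^{\bI}$ and the sets $a^\perp$, $b^\perp$ are closed with empty interior by Lemma~\ref{lem:gq-no-isolated-schubert}. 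The set $Q(a,b)^{\bI}$ is closed with empty interior by Lemma~\ref{lem:gq-common-neighbour-incidence}. This concludes the singular case.

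\emph{Main obstacle.} The step needing the most care is the join description: that every vertex of $\partial\operatorname{Min}(g)$ lies on a Tits segment of length $\pi$ from $a$ to $b$. To justify it, I would take a fixed vertex $\eta$ and the flat half-plane in $Y\times\R$ spanned by a ray towards $\eta$ and the $\R$-factor. Its boundary is a Tits semicircle from $a$ through $\eta$ to $b$, which in the quadrangle is a simplicial path of length $4$ in the incidence graph. A second point to check is that in the singular case the endpoints are genuinely vertices. This holds because in type $\widetilde C_2$ the only non-regular directions are the walls of the Weyl chambers, so a non-regular endpoint is a vertex.
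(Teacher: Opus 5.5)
Your proposal is correct and follows the paper's proof essentially step for step. The steps are: closedness via compactness of the fixed sets and closedness of incidence; then the loxodromic case, where there are finitely many fixed vertices and the incidence images are finite unions of rows and pencils; then the singular case, where $\Fix_{\partial X}(g)=\partial\operatorname{Min}(g)=\partial Y*\{a,b\}$ yields the same inclusions $(\calP^g)^{\bI}\subseteq a^{\bI}\cup b^{\bI}\cup Q(a,b)^{\bI}$ and $(\calL^g)^{\bI}\subseteq a^\perp\cup b^\perp$, which are handled by Lemmas~\ref{lem:gq-no-isolated-schubert} and~\ref{lem:gq-common-neighbour-incidence}. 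Your extra justification of the join description, via the flat half-planes $c\times\R$ inside $\operatorname{Min}(g)$, is sound and fills in a detail the paper leaves implicit.
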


	\begin{proof}
		The incidence images are closed because $\calP^g$ and $\calL^g$ are closed and the incidence relation is closed. It remains to prove that they have empty interior.
		
		Since $X$ is locally finite, it is proper as a $\cat(0)$ space. As in the proof of Proposition~\ref{prop:A2_hyperbolic_no_open_fixed_flags}, one has $\Fix_{\bd X}(g)=\bd\operatorname{Min}(g)$ and the minimal set splits canonically as $\operatorname{Min}(g)\cong Y\times\mathbb R$ where $g$ acts by translation on the $\mathbb R$-factor. Hence $\bd\operatorname{Min}(g)=\bd Y*\{g^-,g^+\} $, where $g^-,g^+\in X^\infty$ are the endpoints of an axis of $g$ and we denoted by $*$ the spherical join.
		
		First suppose that $g^-$ and $g^+$ lie in the interiors of opposite chambers. Then $g$ is loxodromic by \cite[Lemma~2.3]{caprace_ciobotaru15} and $\operatorname{Min}(g)$ is its unique translation apartment. Thus the fixed boundary is the boundary apartment determined by $g^-$ and $g^+$. It contains only finitely many point and line vertices. Therefore $\calP^g$ and $\calL^g$ are finite, and their incidence images are finite unions of point rows and pencils. By Lemma~\ref{lem:gq-no-isolated-schubert}, these have empty interior.
		
		It remains to consider the singular case. Since the spherical building has rank $2$, the endpoints $g^-$ and $g^+$ are opposite vertices. Suppose first that they are point vertices, and write them as $a^-$ and $a^+$ in $\calP$. Every fixed vertex at infinity lies in $\bd\operatorname{Min}(g)$, hence on a geodesic (for the Tits distance) of length $\pi$ joining $a^-$ to $a^+$. In a spherical apartment of type $C_2$, such a geodesic has the form 
		\[a^- \bI L^- \bI q  \bI L^+ \bI a^+.\]
		Consequently, \(\calP^g\subseteq \{a^-,a^+\}\cup Q(a^-,a^+)\) and \(\calL^g\subseteq (a^-)^{\bI}\cup (a^+)^{\bI}\).
		Therefore
		\[
		(\calP^g)^{\bI}
		\subseteq
		(a^-)^{\bI}\cup(a^+)^{\bI}\cup
		Q(a^-,a^+)^{\bI},
		\]
		which has empty interior by Lemmas~\ref{lem:gq-no-isolated-schubert} and \ref{lem:gq-common-neighbour-incidence}. Also \((\calL^g)^{\bI}\subseteq (a^-)^\perp\cup(a^+)^\perp\), which has empty interior by Lemma~\ref{lem:gq-no-isolated-schubert}. The case in which $g^-$ and $g^+$ are lines is dual.
	\end{proof}
	
	\begin{thm}[Open dense opposite geometry in type $\tilde C_2$]\label{thm:gq-open-dense-opposite-geometry}
		Let $X$ be a thick locally finite affine building of type $\tilde C_2$. Let $g\in\iso(X)$ be a type-preserving hyperbolic isometry. Then the opposite geometry \(\calO_{\calF}(g)=\{C\in\calF\mid gC\text{ is opposite }C\}\) is open and dense in $\calF$.
	\end{thm}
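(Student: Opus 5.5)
Openness is immediate from Lemma~\ref{lem open condition finite family} applied to $F=\{g\}$. For density we show that the domestic locus $\calD(g)=\calF\setminus\calO_\calF(g)$, which is closed, has empty interior. By \eqref{eq:gq-chamber-opposition}, a chamber $C=(p,L)$ lies in $\calD(g)$ exactly when $p\in N_\calP(g)$ or $L\in N_\calL(g)$. Hence
\[
\calD(g)=\pi_\calP^{-1}\bigl(N_\calP(g)\bigr)\cup\pi_\calL^{-1}\bigl(N_\calL(g)\bigr),
\]
a union of two closed sets. In a Baire space such as the compact metrizable space $\calF$, a finite union of closed sets with empty interior has empty interior. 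It therefore suffices to show that each preimage has empty interior.

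We treat the preimage $\pi_\calL^{-1}(N_\calL(g))$; the point case is dual. Since $\pi_\calL$ is continuous and open by Lemma~\ref{lem topo prop n gons}, if this preimage contained a non-empty open set $W$, then $\pi_\calL(W)$ would be a non-empty open subset of $N_\calL(g)$. So we must show that $\operatorname{Int}_\calL N_\calL(g)=\emptyset$.

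Lemma~\ref{lem:gq-local-fixed-incidence} gives $\operatorname{Int}_\calL N_\calL(g)\subseteq(\calP^g)^{\bI}$. Proposition~\ref{prop:gq-fixed-incidence-hyperbolic}, which uses that $g$ is hyperbolic, shows that $(\calP^g)^{\bI}$ has empty interior in $\calL$. An open set contained in a set with empty interior is empty, so $\operatorname{Int}_\calL N_\calL(g)=\emptyset$. Dually, $\operatorname{Int}_\calP N_\calP(g)\subseteq(\calL^g)^{\bI}$, and this set has empty interior, so $\operatorname{Int}_\calP N_\calP(g)=\emptyset$.

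Both preimages thus have empty interior, hence so does $\calD(g)$, and $\calO_\calF(g)$ is open and dense. The non-discreteness hypothesis needed in Lemma~\ref{lem:gq-local-fixed-incidence} holds because $X$ is thick and locally finite, as in the proof of Theorem~\ref{thm A2 strong domestic}.

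The substantive inputs are Lemma~\ref{lem:gq-local-fixed-incidence} and Proposition~\ref{prop:gq-fixed-incidence-hyperbolic}, both already established. The only point to check is the reduction from chambers to vertices. The Baire step is harmless here, and even unnecessary: if $W\subseteq\calD(g)$ is open and non-empty, then $W\setminus\pi_\calP^{-1}(N_\calP(g))$ is open and contained in $\pi_\calL^{-1}(N_\calL(g))$. It is therefore empty, which forces $W\subseteq\pi_\calP^{-1}(N_\calP(g))$, and this contradicts what we showed above.
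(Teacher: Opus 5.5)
Your proposal is correct and follows the paper's proof essentially step for step: you use the same decomposition $\calD(g)=\pi_\calP^{-1}(N_\calP(g))\cup\pi_\calL^{-1}(N_\calL(g))$ and the openness of the projections. You also combine Lemma~\ref{lem:gq-local-fixed-incidence} with Proposition~\ref{prop:gq-fixed-incidence-hyperbolic} to show that $N_\calP(g)$ and $N_\calL(g)$ have empty interior, exactly as the paper does. Your elementary argument for the finite union and your remark on non-discreteness simply make explicit two points the paper uses tacitly.
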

	\begin{proof}
		Openness was proved in Lemma~\ref{lem open condition finite family}. We prove density, or equivalently that the domestic locus $\calD(g)=\calF\setminus\calO_{\calF}(g)$ has empty interior.
		
		By Lemma~\ref{lem:gq-local-fixed-incidence} and Proposition~\ref{prop:gq-fixed-incidence-hyperbolic},
		\[
		\operatorname{Int}_{\calL}N_{\calL}(g)=\emptyset,
		\qquad
		\operatorname{Int}_{\calP}N_{\calP}(g)=\emptyset.
		\]
		A chamber $C=(p,L)$ belongs to $\calD(g)$ if and only if $p\in N_{\calP}(g)$ or $L\in N_{\calL}(g)$, thus
		\[
		\calD(g)=
		\pi_{\calP}^{-1}(N_{\calP}(g))
		\cup
		\pi_{\calL}^{-1}(N_{\calL}(g)).
		\]
		The projections $\pi_{\calP}$ and $\pi_{\calL}$ are continuous open surjections by Lemma~\ref{lem topo prop n gons}. Since $N_{\calP}(g)$ and $N_{\calL}(g)$ are closed with empty interior, their preimages under these open maps are closed with empty interior. Hence $\calD(g)$ is a finite union of closed sets with empty interior. Therefore $\calO_{\calF}(g)$ is dense.
	\end{proof}
	
	\begin{cor}[Strong non-domesticity in type $\tilde C_2$]\label{cor:gq-strong-nondomesticity}
		Let $X$ be a thick locally finite affine building of type $\tilde C_2$, and let $\Gamma<\iso(X)$ act properly and type-preservingly. Assume that:
		\begin{enumerate}
			\item $\Gamma$ is of general type;
			\item $\Lambda_{\calF}(\Gamma)=\calF$;
			\item every non-trivial element of $\Gamma$ is hyperbolic.
		\end{enumerate}
		Then $\Gamma$ is strongly non-domestic. In particular, the conclusion holds for type-preserving cocompact lattices acting freely on $X$.
	\end{cor}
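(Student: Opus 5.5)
The plan mirrors the proof of Theorem~\ref{thm A2 strong domestic}, with Theorem~\ref{thm:gq-open-dense-opposite-geometry} replacing Theorem~\ref{thm:A2-open-dense-opposite-geometry}.

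First, fix a finite set $F\subseteq\Gamma\setminus\{e\}$. By assumption (3), every $f\in F$ is hyperbolic. Since $\Gamma$ acts type-preservingly, each such $f$ is a type-preserving hyperbolic isometry of the thick locally finite $\tilde C_2$-building $X$. Theorem~\ref{thm:gq-open-dense-opposite-geometry} then shows that $\calO_\calF(f)$ is open and dense in $\calF$. The space $\calF$ is compact and metrizable by Proposition~\ref{prop topological spherical building}. A finite intersection of open dense subsets of a topological space is again open and dense, and here $\calF\neq\emptyset$. Hence $\Opp_\calF(F)=\bigcap_{f\in F}\calO_\calF(f)$ is non-empty. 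Since $\Lambda_\calF(\Gamma)=\calF$ by assumption (2), this intersection meets the limit set, which is exactly Definition~\ref{def:strong_non-dom}. Note that the general type assumption (1) and properness are only needed to make the limit set meaningful and for later use; the density argument itself does not use them.

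For the final assertion, let $\Gamma$ be a type-preserving cocompact lattice acting freely on $X$. Theorem~\ref{thm flag general type} and the cited \cite[Proposition~6.1]{ciobotaru_le-bars26} show that $\Gamma$ is of general type with $\Lambda_\calF(\Gamma)=\calF$. A cocompact lattice acts properly. It remains to check that every non-trivial $\gamma\in\Gamma$ is hyperbolic, and this is the only step requiring care.

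For that step, I would argue as follows. Since the action is cocompact, every element is semisimple: $\operatorname{Min}(\gamma)\neq\emptyset$ by the standard result for cocompact actions on proper $\cat(0)$ spaces \cite[II.6.10]{bridson_haefliger99}. So $\gamma$ is either elliptic or hyperbolic. If $\gamma$ were elliptic, it would fix a point of $X$. That point lies in some simplex, so $\gamma$ stabilizes a simplex. Because $\gamma$ is type-preserving, it then fixes every vertex of that simplex, and in particular fixes a vertex. This contradicts freeness. Hence $\ell(\gamma)>0$ with non-empty minimal set, so $\gamma$ is hyperbolic, and the previous paragraph applies.
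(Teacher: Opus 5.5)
Your proof is correct and follows the paper's argument essentially step for step. You apply Theorem~\ref{thm:gq-open-dense-opposite-geometry} to each $f\in F$, intersect finitely many open dense subsets of $\calF$, and use $\Lambda_{\calF}(\Gamma)=\calF$; for lattices you then combine Proposition~6.1 of Ciobotaru--Le~Bars with semisimplicity and freeness. The only differences are cosmetic: you get semisimplicity from cocompactness (Bridson--Haefliger II.6.10) rather than from Parreau's result that every isometry of an affine building is semisimple, and you spell out why a type-preserving elliptic element must fix a vertex, which the paper leaves implicit.
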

	\begin{proof}
		Let $f\in\Gamma\setminus\{e\}$. By assumption, $f$ is hyperbolic, so Theorem~\ref{thm:gq-open-dense-opposite-geometry} gives that $\calO_{\calF}(f)$ is open and dense in $\calF$.
		
		If \(F\subseteq\Gamma\setminus\{e\}\) is finite, then \(\Opp_{\calF}(F)=\bigcap_{f\in F}\calO_{\calF}(f)\) is a finite intersection of open dense subsets of the compact Hausdorff space $\calF$, hence is dense and in particular non-empty. 
		
		Any isometry of an affine building is semisimple \cite{parreau00}. For a free cocompact lattice, freeness rules out non-trivial elliptic elements, hence every non-trivial lattice element is hyperbolic. The remaining hypotheses, namely being of general type with full limit set, are satisfied for cocompact lattices by \cite[Proposition~6.1]{ciobotaru_le-bars26}.
	\end{proof}

	\section{Transversal contractivity from strong non-domesticity}
	\label{sec:P-star-from-strong-nondomesticity}
	
	Let \(X\) be an irreducible affine building of type $\tilde A_2, \tilde C_2$ or $\tilde{G}_2$, let \(\calG=(\calP,\calL,\calF)\) be its topological generalized polygon at infinity, and let $\Gamma \curvearrowright X$ be an isometric action of general type. In this section we convert strong non-domesticity into transversal contractivity. We will show that $\Gamma \curvearrowright \mathcal S$ is transversely contractive, with $\mathcal S$ a compact subset of $\calG$. Crucially, the construction depends on the type of the building. Given a chamber $C \in \calF$, we denote by \(\xi_C \in \bd X\) the Tits barycentre of \(C\), and $\overline B_T(\xi_C, \theta) $ is the closed ball of radius $\theta $ around $\xi_C$. We introduce the following notations:
	\begin{itemize}
		\item If $X$ is of type $\tilde A_2$: for $C \in \calF$, set 
		\[
		\mathcal B(C):=\{D\in\calF\mid d_{\calF}(C,D)\leq 1\},
		\]
		where \(d_{\calF}\) denotes distance in the chamber graph.
		\item If $X$ is of type $\tilde C_2$ or $\tilde G_2$: for $C \in \calF$, set 
		\[
		\mathcal B(C)=\overline B_T(\xi_C,\pi/2)\cap(\calP\sqcup\calL).
		\]
		where \(d_{\calF}\) denotes distance in the chamber graph.
	\end{itemize}
	
	Equivalently, in type $\tilde{A}_2$, 
	\[
	\mathcal B(C):=\{D\in\calF\mid d_{\calF}(C,D)\leq 1\},
	\]
	where \(d_{\calF}\) denotes distance in the chamber graph. In type $\tilde C_2$ or $\tilde G_2$, if the generalized polygon has gonality \(n=4\) or \(n=6\), we have
	\[
	\mathcal B(C):=\{v\in\calP\sqcup\calL\mid d(v,C)\leq n/2 - 1\},
	\]
	where \(d(v,C):=\min\{d(v,p_C),d(v,L_C)\}\) is computed in the incidence
	graph, see Figures~\ref{figure B(C) A2} and \ref{figure B(C) C2-G2}.
	
	\begin{figure}[htbp]
		\centering
		\begin{minipage}{.5\textwidth}
			\centering
			\begin{tikzpicture}[scale=2]
				\draw [thick, draw=black, fill=yellow, opacity=0.2]
				(0.5, 0.86) -- (0,0) --  (1,0)  ;	
				\draw [thick, draw=black, fill=yellow, opacity=0.2]
				(1,0) -- (0,0) -- (0.5,-0.86) ;	
				\draw [thick, draw=black, fill=yellow, opacity=0.2]
				(0.5,-0.86) -- (0,0) --  (-0.5,-0.86)  ;	
				\draw (-1,0) -- (1,0)  ;
				\draw (-0.5, -0.86) -- (0.5, 0.86)  ;
				\draw[black] (-0.5, 0.86) -- (0.5, -0.86) ;
				\filldraw (0.5, 0.86) circle(1pt) ;
				\filldraw (-0.5, -0.86) circle(1pt) ;
				\draw (1,-0.4) node{$C$} ;
			\end{tikzpicture}
			\caption{Type $\tilde{A}_2$: the shaded region represents the intersection of $\calB(C)$ with an apartment containing $C$.}\label{figure B(C) A2}
		\end{minipage}%
		\begin{minipage}{.5\textwidth}
			\centering
			\begin{tikzpicture}[scale=2]
				\draw [dashed] (0.38,0.92) -- (-0.38,-0.92);
				\draw (-1,0) -- (1,0)  ;
				\draw (0,-1) -- (0,1)  ;
				\draw (-0.71, -0.71) -- (0.71, 0.71);
				\draw (-0.71, 0.71) -- (0.71, -0.71);
				\draw (0.71, -0.71) circle(1pt) ;
				\draw (0.71, 0.71) circle(1pt) ;
				\draw (1,0) circle(1pt) ;
				\draw (0, -1) circle(1pt) ;
				\draw (1,-0.4) node{$C$} ;
			\end{tikzpicture}
			\caption{Type $\tilde{C}_2$: the vertices with circles represent the intersection of $\calB(C)$ with an apartment containing $C$.}\label{figure B(C) C2-G2}
		\end{minipage}%
	\end{figure}

	\begin{lem}\label{lem:rank-two-block-separation}
		In the three cases above, for chambers \(C,D\in\calF\), one has
		\[
		\mathcal B(C)\cap\mathcal B(D)=\emptyset
		\quad\Longleftrightarrow\quad
		C\text{ is opposite }D.
		\]
		Moreover, \(\mathcal B(C)\) is closed in \(S\), and
		\(g\mathcal B(C)=\mathcal B(gC)\) for every type-preserving
		automorphism \(g\).
	\end{lem}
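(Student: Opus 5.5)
The plan is to prove the equivalence by a direct combinatorial analysis inside one apartment, treating the $\tilde A_2$ case and the $\tilde C_2$/$\tilde G_2$ cases separately. Closedness and equivariance are then handled by general arguments.

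\emph{Type $\tilde A_2$.} Here $\mathcal B(C)$ consists of the chambers at chamber-distance at most $1$ from $C$. If $C=(p,L)$ and $D=(q,M)$ are not opposite, then $p\bI M$ or $q\bI L$. Say $p\bI M$. If $p=q$ or $L=M$, or $C=D$, then $C$ itself or an adjacent chamber is common to both balls. Otherwise the chamber $(p,M)$ is adjacent to both $C$ and $D$, so it lies in $\mathcal B(C)\cap\mathcal B(D)$. The case $q\bI L$ is symmetric via $(q,L)$.

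Conversely, suppose $E\in\mathcal B(C)\cap\mathcal B(D)$. Then $d_\calF(C,D)\le 2$. Opposite chambers in a projective plane are at chamber-graph distance $3$, the length of $w_0$. Hence $C$ and $D$ are not opposite.

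\emph{Types $\tilde C_2$ and $\tilde G_2$.} Let $n\in\{4,6\}$. I will use the description $\mathcal B(C)=\{v\mid d(v,C)\le n/2-1\}$, where $d(v,C)=\min\{d(v,p_C),d(v,L_C)\}$ in the incidence graph. In the Tits metric this says that $\mathcal B(C)$ is the set of vertices at angle $\le \pi/2$ from the barycentre $\xi_C$; vertices at distance exactly $\pi/2$ are excluded by the definition of the barycentre, since $\pi/2$ is an odd multiple of $\pi/(2n)$ plus a non-vertex offset. I will use this as the working definition.

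If $C$ and $D$ are opposite, take the unique apartment $A$ containing both, so $\xi_D$ is antipodal to $\xi_C$ in $A$. Any vertex $v$ lies in an apartment $A'$ containing $v$ and $C$. Retracting onto $A$ centered at $C$ preserves distances to $\xi_C$ and does not increase distances to $\xi_D$. Thus $d_T(v,\xi_C)+d_T(v,\xi_D)\ge\pi$. Since vertices never sit at angle exactly $\pi/2$ from a barycentre (the vertex angles from $\xi_C$ lie in $\pi/(2n)+\frac{\pi}{n}\mathbb Z$, and $\pi/2$ is not of this form for $n$ even), the two balls are disjoint.

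If $C$ and $D$ are not opposite, then $d_T(\xi_C,\xi_D)<\pi$. Put them in a common apartment, a circle of length $2\pi$. The closed $\pi/2$-arcs around $\xi_C$ and $\xi_D$ then overlap in an arc of positive length. That arc has length at least $\pi/n$, because the barycentres differ by multiples of $\pi/n$, so it contains a vertex. Making the length bookkeeping precise is the main (mild) obstacle, and I will check it on the explicit pictures (Figure~\ref{figure B(C) C2-G2}). The same count also settles the $\tilde A_2$ case uniformly if preferred.

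\emph{Closedness and equivariance.} Equivariance is immediate, because type-preserving automorphisms preserve incidence, chamber distance, barycentres and the Tits metric. For closedness in the $\tilde A_2$ case, $\mathcal B(C)$ is the set of chambers sharing a vertex with $C$, namely $\pi_\calP^{-1}(p)\cup\pi_\calL^{-1}(L)$. This is closed because points of $\calP,\calL$ are closed and $\pi_\calP,\pi_\calL$ are continuous (Lemma~\ref{lem topo prop n gons}). In the other cases, $\mathcal B(C)$ is a finite union of iterated incidence sets such as $p^{\bI}$, $(p^{\bI})^{\bI}$, and so on, up to depth $n/2-1$. Each is closed, since the incidence relation is closed and the spaces are compact Hausdorff, by the same projection argument used in Lemma~\ref{lem:gq-common-neighbour-incidence}.
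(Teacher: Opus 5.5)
Your proof is correct and follows the same route as the paper: a chamber-distance count in type $\tilde A_2$, and a check inside a common apartment in types $\tilde C_2$ and $\tilde G_2$. There, your retraction (or plain triangle-inequality) argument, combined with the fact that vertices lie at odd multiples of $\pi/(2n)$ from $\xi_C$, makes explicit why no vertex outside the apartment can lie in both blocks when $C$ and $D$ are opposite. The paper leaves that point implicit.

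The only real difference is closedness. You obtain it from an elementary compactness argument (fibres of $\pi_\calP,\pi_\calL$ and projections of compact incidence sets) instead of citing Kramer's closedness of Schubert varieties. Do fix the slip where you call $\pi/2$ an odd multiple of $\pi/(2n)$: it is the even multiple $n\cdot\pi/(2n)$, which is precisely why no vertex sits at that angle.
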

	
	\begin{proof}
		Equivariance is immediate from the definitions.
		
		In type \(\tilde A_2\), opposite chambers are precisely the chambers at distance \(3\) in the chamber graph. Thus two radius-one chamber blocks meet if and only if the two chambers are at distance at most \(2\), which is equivalent to being non-opposite.
		
		In types \(\tilde C_2\) and \(\tilde G_2\), it suffices to check the statement in an apartment containing both chambers. There, the two panel half-balls \(\overline B_T(\xi_C,\pi/2)\cap(\calP\sqcup\calL)\) and \(\overline B_T(\xi_D,\pi/2)\cap(\calP\sqcup\calL)\) are disjoint exactly when \(C\) and \(D\) are opposite.
		
		Write $C= (p, L)$. The set $\calB(C)$ is a finite union of Schubert varieties in the sense of \cite[\S1.7]{kramer90}. Indeed, in type $A_2$, \(\mathcal B(C)= \calF_1(p,L) \cup \calF_1(L,p)\), with the notations of \cite{kramer90}. In type $C_2$, \(\mathcal B(C)= \mathrm{Cl}\calP_1(L,p) \cup \mathrm{Cl}\calL_1(p,L)\) and in type $G_2$,  \(\mathcal B(C)= \mathrm{Cl}\calP_2(p,L) \cup \mathrm{Cl}\calL_2(L,p)\), again with the notations of \cite{kramer90}. Schubert varieties are closed by \cite[Proposition~2.1.11]{kramer90}, thus $\calB(C)$ is also closed. 
	\end{proof}
	
	From now on, write $\calS = \calF$ if $X$ is of type $\tilde A_2$ and $\calS = \calP \sqcup \calL$ if $X$ is of type $\tilde C_2$ or $\tilde G_2$.
	\begin{prop}\label{prop:rank-two-block-contraction}
		Let \(h\) be loxodromic, with attracting and repelling chambers
		\(C_h^+\) and \(C_h^-\). Let \(V^+\) and \(V^-\) be open neighbourhoods
		of \(\mathcal B(C_h^+)\) and \(\mathcal B(C_h^-)\) in \(\calS\). Then there
		exists \(m_0\geq1\) such that
		\[
		h^m(S\setminus V^-)\subseteq V^+
		\qquad(m\geq m_0).
		\]
	\end{prop}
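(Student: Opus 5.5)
The plan is a compactness argument built on the dynamics in Proposition~\ref{prop dyn srh}. First I reduce everything to chambers. In type $\tilde A_2$ we have $\calS=\calF$. In types $\tilde C_2$ and $\tilde G_2$, every vertex $v\in\calP\sqcup\calL$ is the image under $\pi_\calP$ or $\pi_\calL$ of some chamber. These projections are continuous and open (Lemma~\ref{lem topo prop n gons}), so convergence statements for chambers pass to vertices, and chambers can be lifted locally. Write $A$ for the translation apartment of $h$, and $\rho=\rho_{A,C_h^-}$ for the retraction onto $A$ centred at $C_h^-$.

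The combinatorial core is the following claim. If $s\in\calS\setminus\calB(C_h^-)$, then $\rho(s)$, which is the limit of $h^m s$ by Proposition~\ref{prop dyn srh}, lies in $\calB(C_h^+)$. The reason is that $\rho$ preserves Weyl distance, equivalently Tits distance, from $C_h^-$.
\begin{itemize}
\item In type $\tilde A_2$: $s\notin\calB(C_h^-)$ means $d_\calF(s,C_h^-)\geq2$. Then $\rho(s)$ is a chamber of $A$ at distance $\geq 2$ from $C_h^-$, hence at distance $\leq1$ from the opposite chamber $C_h^+$ in the hexagon $A$.
\item In types $\tilde C_2$ and $\tilde G_2$: $s\notin\calB(C_h^-)$ means $d_T(s,\xi_{C_h^-})>\pi/2$. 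Then $\rho(s)$ is a vertex of the circle $A$ with $d_T(\rho(s),\xi_{C_h^+})=\pi-d_T(\rho(s),\xi_{C_h^-})<\pi/2$, so $\rho(s)\in\calB(C_h^+)$.
\end{itemize}
This is exactly the content of Figures~\ref{figure B(C) A2} and \ref{figure B(C) C2-G2}: the two blocks are complementary halves of $A$.

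Next I argue by contradiction. If the statement fails, there are $m_k\to\infty$ and $s_k\in\calS\setminus V^-$ with $h^{m_k}s_k\notin V^+$. The set $\calS\setminus V^-$ is closed, hence compact, and by Lemma~\ref{lem:rank-two-block-separation} it is disjoint from $\calB(C_h^-)$. There are finitely many Weyl distances to $C_h^-$, so after passing to a subsequence I may assume that all $s_k$ lie in a single Schubert cell $\Omega_w=\{x:\delta(C_h^-,x)=w\}$, where $w$ is one of the values allowed by the claim. Then $\rho(s_k)=c_w$ is a fixed chamber (or vertex) of $A$, and $c_w$ lies in $\calB(C_h^+)\subseteq V^+$. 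It therefore suffices to show that $h^{m_k}s_k\to c_w$. This would follow from the uniform convergence of $h^m\to\rho$ on compact subsets of each cell $\Omega_w$, combined with compactness of $(\calS\setminus V^-)\cap\overline{\Omega_w}$.

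The main obstacle is precisely this uniformity. The limit map $\rho$ is discontinuous across cells, and the limit $s$ of the $s_k$ may fall into a smaller cell $\Omega_{w'}\subseteq\overline{\Omega_w}$, with $w'$ shorter. To handle this, I would use the cone-topology description of $h^m$. Choose a basepoint $o$ on the axis of $h$. Then $h^m$ acts on the rays from $o$ by pushing them along the Weyl chamber at $C_h^+$, and a chamber in $\Omega_w$ converges to $c_w$ at a rate controlled by how long its ray from $o$ stays in $A$ before branching. I would make this quantitative via the standard neighbourhoods $U_o(t,\cdot)$, following the $\sigma_{mod}$-contraction framework of Kapovich--Leeb--Porti cited in the proof of Proposition~\ref{prop dyn srh}. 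An alternative that avoids cell-by-cell uniformity is a monotonicity property: every accumulation point of $h^{m_k}s_k$ lies in $A$ at Weyl distance from $C_h^-$ at least that of $s$. If that holds, the claim of the second paragraph, applied to $s\notin\calB(C_h^-)$, again forces the accumulation points into $\calB(C_h^+)\subseteq V^+$, which is the desired contradiction.
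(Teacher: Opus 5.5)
Your combinatorial step is the same as the paper's: $\rho=\rho_{A,C_h^-}$ preserves position relative to $C_h^-$, so $\rho(\calS\setminus\calB(C_h^-))\subseteq\calB(C_h^+)$. The paper then concludes in one line from the ``uniform on compact sets'' clause of Proposition~\ref{prop dyn srh}. You are right that this clause cannot be used literally on $K=\calS\setminus V^-$. There $\rho$ is in general discontinuous, so it is not a uniform limit of the homeomorphisms $h^m$, and the step genuinely needs an argument.

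\textbf{The gap.} Your proposal does not supply that argument, and both remedies you sketch aim at false statements. Identify $\calF$ with complete flags of $\mathbb{Q}_p^3$ and take $h=\operatorname{diag}(\lambda_1,\lambda_2,\lambda_3)$ with $|\lambda_1|>|\lambda_2|>|\lambda_3|$. Then $C_h^+=([e_1],\langle e_1,e_2\rangle)$, $C_h^-=([e_3],\langle e_2,e_3\rangle)$, and $A$ consists of the six coordinate flags. Put $D=([e_1+e_2],\langle e_1,e_2\rangle)\in\calB(C_h^+)\setminus A$ and $s_k=h^{-m_k}D$ with $m_k\to\infty$.
\begin{itemize}
\item Each $s_k$ is opposite $C_h^-$, so $w=w_0$ and $c_w=C_h^+$.
\item $s_k\to([e_2],\langle e_1,e_2\rangle)\notin\calB(C_h^-)$, so the $s_k$ together with their limit form a compact set disjoint from $\calB(C_h^-)$.
\item Yet $h^{m_k}s_k=D$ for every $k$.
\end{itemize}
Hence $h^{m_k}s_k$ does not converge to $c_w$. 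So the sufficient condition in your first route is false, and no quantitative Kapovich--Leeb--Porti refinement can establish it. The accumulation point $D$ also does not lie in $A$, which defeats the monotonicity route. Without ``lies in $A$'', a lower bound on the Weyl distance from $C_h^-$ does not place a chamber in $\calB(C_h^+)$, so the second paragraph's claim cannot be invoked.

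\textbf{The missing idea.} Limit points only need to land in $\calB(C_h^+)$, which is a union of Schubert varieties and is not contained in $A$. You should prove exactly that, using only contraction on big cells (where the limit is constant and convergence is genuinely uniform) together with closedness of incidence. Write $C_h^\pm=(p^\pm,L^\pm)$.
\begin{itemize}
\item In type $\tilde C_2$, a point $q\notin(L^-)^{\bI}$ lies on lines opposite $L^-$ (all but one line through $q$). Under $h^m$ these lines converge to $L^+$ uniformly on compact subsets of the line big cell. Choosing such lines locally continuously, by local triviality of $\pi_\calP$, and using closedness of incidence, $h^m$ pushes compact sets of such points into any neighbourhood of $(L^+)^{\bI}$.
\item Dually, lines not through $p^-$ are pushed into any neighbourhood of $(p^+)^{\bI}$. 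Since $\calB(C_h^+)=(p^+)^{\bI}\cup(L^+)^{\bI}$, this settles $\tilde C_2$.
\item In type $\tilde A_2$, use lines through $q$ avoiding $p^-$ and points of $M$ off $L^-$. The same reasoning puts $h^mq$ near $(L^+)^{\bI}$ and $h^mM$ near $(p^+)^{\bI}$ for $(q,M)\in K$. A limit flag $(q',M')$ with $q'\bI L^+$ and $p^+\bI M'$ must have $q'=p^+$ or $M'=L^+$, i.e.\ it lies in $\calB(C_h^+)$.
\item Type $\tilde G_2$ is analogous, with paths of length two in place of single incidences.
\end{itemize}
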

	
	\begin{proof}
		Set \(K:=S\setminus V^-\). Then \(K\) is compact and is disjoint from
		\(\mathcal B(C_h^-)\).
		
		Let \(A\) be the translation apartment of \(h\), and let \(\rho=\rho_{A,C_h^-}\). The retraction \(\rho\) preserves Weyl position with respect to \(C_h^-\). Therefore, by checking in the corresponding rank-two Coxeter complex, every point of \(\calS\setminus\mathcal B(C_h^-)\)	retracts into \(\mathcal B(C_h^+)\). Hence
		\(\rho(K)\subseteq \mathcal B(C_h^+)\subseteq V^+\). The conclusion follows from Proposition~\ref{prop dyn srh}.
	\end{proof}

	\begin{lem}
		\label{lem:prepare-endpoints-strong-nondomestic}
		Let \(\Gamma<\iso(X)\) act properly and type-preservingly. Assume that
		\(\Gamma\) is of general type and that the action
		\(\Gamma\curvearrowright\calF\) is strongly non-domestic. Let
		\(F\subseteq\Gamma\setminus\{e\}\) be finite. Then there exist opposite
		chambers \(C^+,C^-\in\Lambda_{\calF}(\Gamma)\) such that, for every
		\(f\in F\) and every \(\sigma,\tau\in\{+,-\}\), the chambers
		\(fC^\sigma\) and \(C^\tau\) are opposite.
	\end{lem}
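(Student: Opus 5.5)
We need opposite chambers $C^+,C^-$ in $\Lambda_\calF(\Gamma)$ such that $fC^\sigma$ is opposite $C^\tau$ for all $f\in F$ and all signs. The plan is to fix $C^+$ first by strong non-domesticity, and then choose $C^-$ in an open set using the general type hypothesis.

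\emph{Choice of $C^+$.} Consider the enlarged finite set $F':=F\cup F^{-1}\subseteq\Gamma\setminus\{e\}$. By strong non-domesticity (Definition~\ref{def:strong_non-dom}) there is $C^+\in\Lambda_\calF(\Gamma)\cap\bigcap_{f\in F'}\calO_\calF(f)$. Then $fC^+$ is opposite $C^+$ for every $f\in F$, which handles $(\sigma,\tau)=(+,+)$. It also gives $f^{-1}C^+$ opposite $C^+$, which we will not really need.

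\emph{Choice of $C^-$.} All remaining conditions are open conditions on $C^-$ that must hold on $\Lambda_\calF(\Gamma)$:
\begin{itemize}
\item $C^-$ is opposite $C^+$;
\item $C^-$ is opposite $fC^+$ and opposite $f^{-1}C^+$ (since $fC^-$ opposite $C^+$ iff $C^-$ opposite $f^{-1}C^+$), covering $(\sigma,\tau)=(+,-)$ and $(-,+)$;
\item $fC^-$ is opposite $C^-$ for each $f\in F$, covering $(-,-)$.
\end{itemize}
The first two items say that $C^-$ lies in the finite intersection
\[
W:=\calO(C^+)\cap\bigcap_{f\in F}\calO(fC^+)\cap\calO(f^{-1}C^+),
\]
which is open by Lemma~\ref{lem totally opposite open sets}. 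Since $\Gamma$ is of general type and $\Lambda_\calF(\Gamma)$ is closed, non-empty and $\Gamma$-invariant, Definition~\ref{def general type} gives $\Lambda_\calF(\Gamma)\cap W\neq\varnothing$.

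The third item, $C^-\in\Opp_\calF(F)$, is the main obstacle: we need $\Lambda_\calF(\Gamma)\cap W\cap\Opp_\calF(F)\neq\varnothing$, while strong non-domesticity only gives $\Lambda_\calF(\Gamma)\cap\Opp_\calF(F)\neq\varnothing$, with no transversality to the finitely many chambers $C^+,fC^{+},f^{-1}C^+$. To overcome this, I would use minimality and conjugation. The set $O:=\Opp_\calF(F)$ is open by Lemma~\ref{lem open condition finite family}, and it meets $\Lambda_\calF(\Gamma)$. Pick $D\in O\cap\Lambda_\calF(\Gamma)$ and an open $V\ni D$ with $V\subseteq O$.

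By the contraction argument in the proof of Proposition~\ref{prop:prox_general_type} (\cite[Proposition~6.4]{ciobotaru_le-bars26}), there is a sequence $s_n$ with $s_nC\to C_0$ for all $C$. Using minimality, we may translate so that the limit $C_0$ lies in $V$, and then $s_n$ maps $\Lambda_\calF(\Gamma)\cap W$ into $V$ for large $n$. This alone does not give a point in $V\cap W$, though.

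A cleaner route is a Baire-type argument. When $\Lambda_\calF(\Gamma)=\calF$, as for lattices and in all applications, $O$ is dense by Theorems~\ref{thm:A2-open-dense-opposite-geometry} and \ref{thm:gq-open-dense-opposite-geometry}, and $W$ is open and dense by Lemma~\ref{lem opp ch dense}. Hence $O\cap W\neq\varnothing$. In general I would apply strong non-domesticity to a larger finite set to force the conclusion, for instance by choosing $C^+,C^-$ simultaneously as $C^+$ and $gC^+$ for a suitable $g$.

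Concretely, take $C^-:=hC^+$ for some $h\in\Gamma$, and apply strong non-domesticity to
\[
F'':=F\cup F^{-1}\cup\{h\}\cup\{h^{-1}fh,\ fh,\ h^{-1}f : f\in F\},
\]
all assumed non-trivial. Then all four conditions become $C^+\in\calO(k)$ for $k\in F''$:
\begin{itemize}
\item $hC^+$ is opposite $C^+$;
\item $fhC^+$ is opposite $hC^+$, since $h^{-1}fh\in F''$;
\item $fC^+$ is opposite $hC^+$, since $h^{-1}f\in F''$;
\item $fhC^+$ is opposite $C^+$, since $fh\in F''$.
\end{itemize}
Finally $C^-=hC^+\in\Lambda_\calF(\Gamma)$ by invariance. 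It remains to choose $h\notin\{e\}\cup F\cup F^{-1}$, which is possible since $\Gamma$ is infinite, as it contains a loxodromic element. This $h$ avoids the degenerate cases $h=e$, $fh=e$ and $h^{-1}f=e$, while $h^{-1}fh\neq e$ holds automatically. This reduction to a single application of strong non-domesticity is the approach I would write up.
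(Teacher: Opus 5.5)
Your final argument is correct and is essentially the paper's proof: set $C^-:=hC^+$ for some $h\notin\{e\}\cup F\cup F^{-1}$, and apply strong non-domesticity once to the finite set $F\cup\{h\}\cup h^{-1}Fh\cup Fh\cup h^{-1}F$ of non-trivial elements, with $C^-\in\Lambda_{\calF}(\Gamma)$ by invariance (the extra $F^{-1}$ you include is harmless but unnecessary). Drop the earlier exploratory routes via general type, contraction, or density of opposite geometries; the density theorems in particular need hyperbolicity and full limit set, which this lemma does not assume.
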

	
	\begin{proof}
		Since \(\Gamma\) is of general type, it contains a loxodromic element and
		is therefore infinite. Choose
		\(a\in\Gamma\setminus(\{e\}\cup F\cup F^{-1})\). Set
		\[
		L:=
		F
		\cup a^{-1}Fa
		\cup a^{-1}F
		\cup Fa
		\cup\{a\}.
		\]
		By the choice of \(a\), every element of \(L\) is non-trivial.
		
		By strong non-domesticity, choose \(C^+\in\Lambda_{\calF}(\Gamma)\) such that \(\ell C^+\) is opposite to \(C^+\) for every \(\ell\in L\). Put \(C^-:=aC^+\). Since
		\(a\in L\), the chambers \(C^+\) and \(C^-\) are opposite.
		
		Let \(f\in F\). Then \(fC^+\) opposite \(C^+\) because \(f\in L\). The chambers \(fC^-\) and \(C^-\) are opposite because \(a^{-1}faC^+\) is opposite to \(C^+\), and the chambers \(fC^+\) and \(C^-\) are opposite because \(a^{-1}fC^+\) is opposite to \(C^+\). Finally, \(fC^-\) opposite \(C^+\) follows from \(fa\in L\). This proves the claim.
	\end{proof}
	The following is Theorem~\ref{Thm:intro-str_non-dom_trans_cont} from the introduction. 
	\begin{thm}[Strong non-domesticity implies transversal contractivity]
		\label{thm:rank-two-strongndom-Pstar}
		Let \(X\) be a locally finite affine building of type
		\(\tilde A_2\), \(\tilde C_2\), or \(\tilde G_2\). Let
		\(\Gamma<\iso(X)\) act properly and type-preservingly. Assume that
		\(\Gamma\) is of general type and that the action
		\(\Gamma\curvearrowright\calF\) is strongly non-domestic. Then the
		action \(\Gamma\curvearrowright \calS\) is transversely contractive.
	\end{thm}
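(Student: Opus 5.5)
Fix a finite set $F\subseteq\Gamma\setminus\{e\}$. The plan is to produce a loxodromic element $\gamma\in\Gamma$ whose attracting and repelling chambers are close to a well-chosen opposite pair. We then take $U^\pm$ to be small open neighbourhoods in $\calS$ of the blocks $\calB(C_\gamma^\pm)$. Proposition~\ref{prop:rank-two-block-contraction} will give the contraction condition~(1) after passing to a power of $\gamma$. The transversality condition~(2) will come from the block-separation criterion of Lemma~\ref{lem:rank-two-block-separation}.

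First I apply Lemma~\ref{lem:prepare-endpoints-strong-nondomestic} to $F$. This gives opposite chambers $C^+,C^-\in\Lambda_\calF(\Gamma)$ with $fC^\sigma$ opposite $C^\tau$ for all $f\in F$ and $\sigma,\tau\in\{+,-\}$. By Lemma~\ref{lem totally opposite open sets}, opposition is an open relation. Using continuity of the finitely many maps $f\in F$, I then choose open neighbourhoods $W^\pm$ of $C^\pm$ in $\calF$ with three properties: $W^+$ and $W^-$ are totally opposite; $fW^\sigma$ and $W^\tau$ are totally opposite for all $f\in F$ and all signs; and each $W^\sigma$ is totally opposite to itself. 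The last property is not needed for~(2) itself, but it keeps $U^+$ and $U^-$ disjoint.

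Since $C^\pm\in\Lambda_\calF(\Gamma)$ and are opposite, Proposition~\ref{prop:prox_general_type} provides a loxodromic $h\in\Gamma$ with $C_h^+\in W^+$ and $C_h^-\in W^-$. For $D,D'\in W:=W^+\cup W^-$ with $D\neq D'$ and for $f\in F$, the chambers $D$ and $D'$ are opposite, and so are $fD$ and $D'$. Lemma~\ref{lem:rank-two-block-separation} then says that the closed sets $\calB(C_h^+)$ and $\calB(C_h^-)$ are disjoint. It also says that each of them is disjoint from $f\calB(C_h^\pm)=\calB(fC_h^\pm)$ for every $f\in F$.

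Set $K:=\calB(C_h^+)\cup\calB(C_h^-)$, a compact subset of $\calS$. We have $K\cap fK=\varnothing$ for each $f\in F$, with $F$ finite. So, using that $\calS$ is compact metrizable (Proposition~\ref{prop topological spherical building}) and that each $f$ is a homeomorphism, there are disjoint open neighbourhoods $V^\pm\supseteq\calB(C_h^\pm)$ such that $V:=V^+\cup V^-$ satisfies $V\cap fV=\varnothing$ for all $f\in F$. Concretely, take $V^\pm$ to be $\varepsilon$-neighbourhoods of the blocks for a compatible metric and small $\varepsilon$, and apply uniform continuity of $f$ and $f^{-1}$. Finally, Proposition~\ref{prop:rank-two-block-contraction} gives $m$ with $h^m(\calS\setminus V^-)\subseteq V^+$. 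We take $\gamma=h^m$, $U^+=V^+$ and $U^-=V^-$.

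I expect the main obstacle to be a point of rigour rather than a conceptual one. One has to check that Lemma~\ref{lem:rank-two-block-separation} is really used in its correct form, namely that $\calB(C)\cap\calB(D)=\varnothing$ exactly when $C$ and $D$ are opposite. The delicate part is the step from disjointness of the compact blocks to disjointness of neighbourhoods $V$ and $fV$. This needs closedness of the blocks in $\calS$ and continuity of the action on $\calS=\calP\sqcup\calL$ (respectively $\calF$), which follows from Lemma~\ref{lem topo prop n gons} and the fact that automorphisms are homeomorphisms.
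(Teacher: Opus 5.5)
Your proposal follows the paper's proof step by step. You apply Lemma~\ref{lem:prepare-endpoints-strong-nondomestic}, take neighbourhoods on which the finitely many opposition conditions persist, and get $h$ from Proposition~\ref{prop:prox_general_type}. You then separate $\calB(C_h^\pm)$ from each other and from their $F$-translates via Lemma~\ref{lem:rank-two-block-separation}, thicken the blocks to $V^\pm$, and pass to $\gamma=h^m$ by Proposition~\ref{prop:rank-two-block-contraction}.

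The one step you must delete is your third requirement, that each $W^\sigma$ be totally opposite to itself. It cannot be met, since $C^\sigma\in W^\sigma$ is never opposite to itself. The weaker reading, that distinct chambers of $W^\sigma$ are pairwise opposite, also fails: pencils and point rows have no isolated points, so every neighbourhood of $C^\sigma$ contains chambers sharing a panel with $C^\sigma$.

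The requirement is also unnecessary. The disjointness of $\calB(C_h^+)$ and $\calB(C_h^-)$, and hence of $V^+$ and $V^-$, already follows from your first property, and indeed from $C_h^+$ and $C_h^-$ being opposite for any loxodromic $h$. Accordingly, your sentence about $D,D'\in W$ should claim plain opposition only for $D\in W^+$, $D'\in W^-$. The opposition of $fD$ and $D'$ should be claimed for all $D,D'\in W$, including $D=D'$, by your second property; this case is what gives $f\calB(C_h^\sigma)\cap\calB(C_h^\sigma)=\varnothing$.

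With that correction your argument coincides with the paper's. The $\varepsilon$-neighbourhood construction is a valid concrete substitute for the paper's appeal to normality of $\calS$.
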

	
	\begin{proof}
		Let \(F\subseteq\Gamma\setminus\{e\}\) be finite.
		By Lemma~\ref{lem:prepare-endpoints-strong-nondomestic}, choose opposite
		chambers \(C^+,C^-\in\Lambda_{\calF}(\Gamma)\) such that, for all
		\(f\in F\) and all \(\sigma,\tau\in\{+,-\}\), the chambers
		\(fC^\sigma\) and \(C^\tau\) are opposite. Since opposition is open, we
		may choose open neighbourhoods \(U^+\) of \(C^+\) and \(U^-\) of \(C^-\)
		such that $fy^\sigma$ is opposite $z^\tau $ for all $f\in F,\ y^\sigma\in U^\sigma,\ z^\tau\in U^\tau$. 
		Here \(\sigma,\tau\in\{+,-\}\).
		
		We now use Proposition~\ref{prop:prox_general_type}. The open sets
		\(U^+\) and \(U^-\) meet \(\Lambda_{\calF}(\Gamma)\), and they contain
		the opposite limit chambers \(C^+\) and \(C^-\). Hence there exists a
		loxodromic element \(h\in\Gamma\) such that
		\(C_h^+\in U^+\) and \(C_h^-\in U^-\).
		
		By Lemma~\ref{lem:rank-two-block-separation}, the closed sets \(\calB(C_h^+)\) and
		\(\calB(C_h^-)\) are disjoint. Moreover, by the choice of \(U^+\) and \(U^-\),
		the chambers \(fC_h^\sigma\) and \(C_h^\tau\) are opposite for all
		\(f\in F\) and all signs \(\sigma,\tau\). Using equivariance of the
		blocks and Lemma~\ref{lem:rank-two-block-separation}, we get
		\(f\calB(C_h^\sigma)\cap \calB(C_h^\tau)=\emptyset\) for all such \(f,\sigma,\tau\).
		
		Since \(S\) is compact Hausdorff, hence normal, and since only finitely
		many closed-set separation conditions occur, we may choose open
		neighbourhoods \(V^\pm\supseteq \calB(C_h^{\pm})\) such that
		\(V^+\cap V^-=\emptyset\) and
		\(fV^\sigma\cap V^\tau=\emptyset\) for every \(f\in F\) and all signs
		\(\sigma,\tau\).
		
		Choose \(m\) large enough so that
		\(h^m(S\setminus V^-)\subseteq V^+\), which is possible by
		Proposition~\ref{prop:rank-two-block-contraction}. Put
		\(\gamma:=h^m\). Then \(\gamma(S\setminus V^-)\subseteq V^+\), the sets
		\(V^+\) and \(V^-\) are disjoint, and, writing \(V=V^+\cup V^-\), one
		has \(fV\cap V=\emptyset\) for every \(f\in F\). 
	\end{proof}
	The following easy lemma shows that being transversely contractive is an intrinsic property of a group. 
	\begin{lem}\label{lem:pullback-left-regular-transversal}
		If \(\Gamma\) admits a transversely contractive action on a non-empty set
		\(S\), then the left-regular action \(\Gamma\curvearrowright\Gamma\) is
		transversely contractive.
	\end{lem}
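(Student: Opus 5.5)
The plan is to pull the sets back along an orbit map. Fix a base point \(s_0\in S\) and let \(\omega\colon\Gamma\to S\), \(\omega(\gamma)=\gamma s_0\), be the orbit map. It is \(\Gamma\)-equivariant, since \(\omega(g\gamma)=g\,\omega(\gamma)\) for all \(g,\gamma\in\Gamma\). Given a finite set \(F\subseteq\Gamma\setminus\{e\}\), transversal contractivity of \(\Gamma\curvearrowright S\) provides \(\gamma\in\Gamma\) and disjoint sets \(U^\pm\subseteq S\) satisfying conditions (1) and (2) of Definition~\ref{def:P-star-PHP}. I will then take
\[
W^\pm:=\omega^{-1}(U^\pm)=\{\eta\in\Gamma\mid \eta s_0\in U^\pm\},
\]
keep the same element \(\gamma\), and check both conditions for the left-regular action.

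Disjointness is immediate, because \(W^+\cap W^-=\omega^{-1}(U^+\cap U^-)=\varnothing\).

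For condition (1), take \(\eta\in\Gamma\setminus W^-\). Then \(\eta s_0\notin U^-\), so \(\gamma\eta s_0\in U^+\) by (1) for \(S\). Hence \(\gamma\eta\in W^+\), which gives \(\gamma(\Gamma\setminus W^-)\subseteq W^+\).

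For condition (2), let \(f\in F\) and suppose some \(\eta\in W^+\cup W^-\) also lies in \(f(W^+\cup W^-)\). Write \(\eta=f\eta'\) with \(\eta'\in W^+\cup W^-\). Then \(\eta s_0\in U^+\cup U^-\), and also \(\eta s_0=f(\eta' s_0)\in f(U^+\cup U^-)\). This contradicts (2) for \(S\).

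No step here is a real obstacle. The argument rests only on equivariance and on preimages commuting with unions, intersections and complements. The one point to keep straight is the convention for the left-regular action, \(g\cdot\eta=g\eta\), so that \(\omega\) is equivariant. It is also worth noting that nothing requires the \(W^\pm\) to be non-empty, matching the definition, which only asks that \(U^\pm\) be disjoint subsets.
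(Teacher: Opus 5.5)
Your proposal is correct and is essentially the paper's own proof. Both fix \(s_0\in S\), pull \(U^\pm\) back along the equivariant orbit map \(a\mapsto as_0\), and verify the contraction and disjointness conditions using equivariance and the fact that preimages commute with unions, intersections and complements.
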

	
	\begin{proof}
		Choose \(s_0\in S\). Given elements \(\gamma_i\) and subsets
		\(U_i^\pm\subseteq S\) witnessing transversal contractivity for the
		action on \(S\), set
		\[
		C_i^\pm:=\{a\in\Gamma\mid as_0\in U_i^\pm\}.
		\]
		If \(a\notin C_i^-\), then \(as_0\notin U_i^-\), hence
		\(\gamma_i as_0\in U_i^+\), so \(\gamma_i a\in C_i^+\). Pairwise
		disjointness and the conditions \(fC\cap C=\emptyset\), where
		\(C=\bigcup_i(C_i^+\cup C_i^-)\), follow by applying the orbit map
		\(a\mapsto as_0\) to the corresponding conditions for the sets
		\(U_i^\pm\). Thus the left-regular action is transversely contractive.
	\end{proof}
	
	\begin{cor}\label{cor:A2-Pstar-full-limit}
		Let \(X\) be a locally finite affine building of type \(\tilde A_2\),
		and let \(\Gamma<\iso(X)\) act properly and type-preservingly. Assume
		that \(\Gamma\) is of general type and that
		\(\Lambda_{\calF}(\Gamma)=\calF\). Then
		\(\Gamma\curvearrowright\calF\) is transversely contractive. In
		particular, the conclusion holds for type-preserving cocompact lattices.
	\end{cor}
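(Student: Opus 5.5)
The corollary should follow by chaining two earlier results: Theorem~\ref{thm A2 strong domestic}, which gives strong non-domesticity on flags, and Theorem~\ref{thm:rank-two-strongndom-Pstar}, which turns strong non-domesticity into transversal contractivity. In type $\tilde A_2$ the space $\calS$ of Theorem~\ref{thm:rank-two-strongndom-Pstar} is $\calF$ itself, so the output is exactly a transversely contractive action on $\calF$.

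First, apply Theorem~\ref{thm A2 strong domestic} to $\Gamma<\iso(X)$, which acts properly and type-preservingly, is of general type, and has $\Lambda_\calF(\Gamma)=\calF$. This requires $X$ to be thick, which is a standing assumption since generalized polygons are assumed thick throughout. The conclusion is that $\Gamma\curvearrowright\calF$ is strongly non-domestic. Next, feed this, together with properness, type-preservation and general type, into Theorem~\ref{thm:rank-two-strongndom-Pstar}. That yields transversal contractivity of $\Gamma\curvearrowright\calS=\calF$.

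For the final assertion, let $\Gamma$ be a type-preserving cocompact lattice. It acts properly because it is discrete in $\iso(X)$ and $X$ is locally finite. It is of general type with full flag limit set by \cite[Proposition~6.1]{ciobotaru_le-bars26}, as recalled in Theorem~\ref{thm flag general type}. Hence the first part applies.

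There is no genuine obstacle here; the only step needing care is matching hypotheses. Specifically, the thickness assumption needed in Theorem~\ref{thm A2 strong domestic} must hold (it does by the standing convention), and the set $\calS$ must be identified with $\calF$ in the $\tilde A_2$ case.
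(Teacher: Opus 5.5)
Your proposal is correct and follows the paper's proof exactly: Theorem~\ref{thm A2 strong domestic} gives strong non-domesticity on $\calF$, Theorem~\ref{thm:rank-two-strongndom-Pstar} (with $\calS=\calF$ in type $\tilde A_2$) converts it into transversal contractivity, and \cite[Proposition~6.1]{ciobotaru_le-bars26} supplies general type and full flag limit set for cocompact lattices. Your extra remarks on the standing thickness convention and on properness of lattices just make explicit hypotheses the paper uses tacitly.
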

	
	\begin{proof}
		By Theorem~\ref{thm A2 strong domestic}, the action on \(\calF\) is
		strongly non-domestic. Theorem~\ref{thm:rank-two-strongndom-Pstar} gives
		the conclusion. For cocompact lattices, the hypotheses of general type
		and full flag limit set follow from
		\cite[Proposition~6.1]{ciobotaru_le-bars26}.
	\end{proof}
	
	\begin{cor}\label{cor:C2-Pstar-full-limit}
		Let \(X\) be a locally finite affine building of type \(\tilde C_2\),
		and let \(\Gamma<\iso(X)\) act properly and type-preservingly. Assume
		that \(\Gamma\) is of general type, that
		\(\Lambda_{\calF}(\Gamma)=\calF\), and that every non-trivial element of
		\(\Gamma\) is hyperbolic. Then
		\(\Gamma\curvearrowright\calP\sqcup\calL\) is transversely contractive,
		and consequently \(\Gamma\) is transversely contractive. In particular,
		the conclusion holds for free type-preserving cocompact lattices.
	\end{cor}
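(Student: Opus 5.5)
This corollary is the $\tilde C_2$ analogue of Corollary~\ref{cor:A2-Pstar-full-limit}, so I plan to chain the earlier results in the same order. First, Corollary~\ref{cor:gq-strong-nondomesticity} applies directly to the given hypotheses. These are: $\Gamma$ is of general type, acts properly and type-preservingly, has full flag limit set $\Lambda_{\calF}(\Gamma)=\calF$, and all its non-trivial elements are hyperbolic. The corollary yields that the action $\Gamma\curvearrowright\calF$ is strongly non-domestic. Recall where density comes from: for each $f\in\Gamma\setminus\{e\}$, Theorem~\ref{thm:gq-open-dense-opposite-geometry} gives $\calO_\calF(f)$ open and dense. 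A finite intersection of open dense sets in the compact (metrizable) space $\calF$ is non-empty, and it meets $\Lambda_\calF(\Gamma)=\calF$ trivially.

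Next, I would invoke Theorem~\ref{thm:rank-two-strongndom-Pstar} with $X$ of type $\tilde C_2$. Its hypotheses are exactly proper, type-preserving, general type, and strongly non-domestic on $\calF$. In the $\tilde C_2$ case the space there is $\calS=\calP\sqcup\calL$, so the action $\Gamma\curvearrowright\calP\sqcup\calL$ is transversely contractive. Since $\calP\sqcup\calL$ is non-empty, Lemma~\ref{lem:pullback-left-regular-transversal} then shows that the left-regular action is transversely contractive. Hence $\Gamma$ is transversely contractive as a group.

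For the final sentence, let $\Gamma$ be a free type-preserving cocompact lattice. Then the action is proper. By \cite[Proposition~6.1]{ciobotaru_le-bars26} (see Theorem~\ref{thm flag general type}), $\Gamma$ is of general type with $\Lambda_\calF(\Gamma)=\calF$. Every isometry of an affine building is semisimple \cite{parreau00}, so each non-trivial element is either elliptic or hyperbolic. An elliptic element fixes a point of $X$, and freeness excludes this, so every non-trivial element is hyperbolic. This verification was already recorded in the proof of Corollary~\ref{cor:gq-strong-nondomesticity}.

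I see no real obstacle, since all the substance sits in Theorem~\ref{thm:gq-open-dense-opposite-geometry} and Theorem~\ref{thm:rank-two-strongndom-Pstar}. The only points to check are minor. One is that $X$ is thick, which Corollary~\ref{cor:gq-strong-nondomesticity} needs and which is the paper's standing convention. The other is that the compact space denoted $S$ in Theorem~\ref{thm:rank-two-strongndom-Pstar} is indeed $\calP\sqcup\calL$ in type $\tilde C_2$.
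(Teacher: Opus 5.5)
Your proposal is correct and follows the paper's own proof: Corollary~\ref{cor:gq-strong-nondomesticity} gives strong non-domesticity, Theorem~\ref{thm:rank-two-strongndom-Pstar} gives transversal contractivity on $\calP\sqcup\calL$, and Lemma~\ref{lem:pullback-left-regular-transversal} passes this to the group. The cocompact-lattice case is handled, as in the paper, by \cite[Proposition~6.1]{ciobotaru_le-bars26}, together with the observation that freeness rules out non-trivial elliptic elements.
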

	
	\begin{proof}
		By Corollary~\ref{cor:gq-strong-nondomesticity}, the action on
		\(\calF\) is strongly non-domestic. Theorem~\ref{thm:rank-two-strongndom-Pstar}
		gives transversal contractivity for the action on the panel space. The
		pullback to the left-regular action follows from
		Lemma~\ref{lem:pullback-left-regular-transversal}.
		
		For free type-preserving cocompact lattices, the hypotheses of general
		type and full flag limit set follow from
		\cite[Proposition~6.1]{ciobotaru_le-bars26}, and freeness rules out
		non-trivial elliptic elements.
	\end{proof}
	
    \section{Applications}
    The purpose of this section is explain how to deduce MIF and selflessness from transversal contractivity, and thus to complete the proofs of Theorem \ref{Thm:MIF} and Theorem \ref{thm:intro-selflessness}
    \subsection{Mixed identities and selflessness}
	
	\begin{prop}\label{prop:trans-contr_mif}
		Let $\Gamma$ be a non-trivial transversely contractive group. Then $\Gamma$ is mixed-identity-free.
		
		More precisely, for every finite non-empty set $F\subset \Gamma\setminus\{e\}$,
		there exists $\gamma\in\Gamma$ such that, for every non-trivial word
		$w\in \Gamma * \langle x\rangle$ whose coefficients belong to $F$, one has
		$w(\gamma)\neq e$.
	\end{prop}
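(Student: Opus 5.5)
The plan is a ping-pong argument for the action given by Definition~\ref{def:P-star-PHP}, after first enlarging $F$ so that cyclic conjugation of words stays within the allowed coefficients. Fix a finite non-empty $F\subseteq\Gamma\setminus\{e\}$ and set
\[
F_1:=\bigl(F\cup F^{-1}\cup FF\cup FF^{-1}\cup F^{-1}F\bigr)\setminus\{e\}.
\]
Apply transversal contractivity to $F_1$ (for some action $\Gamma\curvearrowright S$, e.g.\ the left-regular one via Lemma~\ref{lem:pullback-left-regular-transversal}). This yields $\gamma$ and disjoint $U^\pm$, and we write $U=U^+\cup U^-$. We also check that $U^+\neq\varnothing$. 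Otherwise condition~(1) forces $U^-=S$, so $U=S$, and condition~(2) then contradicts $F_1\neq\varnothing$.

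\emph{Basic ping-pong step.} Since $\gamma(S\setminus U^-)\subseteq U^+\subseteq S\setminus U^-$, induction gives $\gamma^n(S\setminus U^-)\subseteq U^+$ for $n>0$, and dually $\gamma^{n}(S\setminus U^+)\subseteq U^-$ for $n<0$. In particular every $z\notin U$ satisfies $\gamma^n z\in U^{\operatorname{sign}(n)}$. Condition~(2) says that every $f\in F_1$ maps $U$ into $S\setminus U$.

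\emph{Normal form.} Write a non-trivial $w$ in reduced form
\[
w=g_0x^{n_1}g_1\cdots x^{n_k}g_k,
\]
with $g_1,\dots,g_{k-1}\in F$, $g_0,g_k\in F\cup\{e\}$ and $n_i\neq0$. If $k=0$, then $w=g_0\neq e$ and there is nothing to prove. Otherwise we replace $w$ by a conjugate. Choose any $c\in F$ and consider
\[
w':=c\,g_k\,w\,g_k^{-1}c^{-1}=(c g_kg_0)\,x^{n_1}g_1\cdots g_{k-1}\,x^{n_k}\,c^{-1}.
\]
The reason for conjugating first by $g_k$ is to merge the outer coefficients into a single one, $g_kg_0$. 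If $g_kg_0=e$, the leading coefficient of $w'$ is just $c\in F$. In every case the leading coefficient $a:=cg_kg_0$ and the trailing coefficient $c^{-1}$ are non-trivial elements of $F_1$, up to one more bookkeeping point: the possible case $cg_kg_0=e$. This is avoided by choosing $c$ so that $c\neq (g_kg_0)^{-1}$; if $F=\{(g_kg_0)^{-1}\}$ is the only option, we instead take $c$ so that the outer product is trivial and argue with the shorter word. The bookkeeping of exactly which products must be thrown into $F_1$ is the step most likely to need adjusting, and if necessary one simply takes $F_1$ to be all non-trivial products of at most three elements of $F\cup F^{-1}$. Since $w(\gamma)=e$ iff $w'(\gamma)=e$, it suffices to treat $w'$.

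\emph{Conclusion.} Pick $y\in U^+$ and follow the orbit of $y$ under $w'(\gamma)$ from right to left. First, $c^{-1}y\notin U$ by condition~(2). Then $\gamma^{n_k}c^{-1}y\in U^{\pm}\subseteq U$ by the basic step. Each subsequent $g_i\in F$ pushes the current point out of $U$, and each power $\gamma^{n_i}$ pulls it back into $U$. After the last power the point lies in $U$, and finally $a\in F_1$ sends it to $S\setminus U$. Thus $w'(\gamma)y\notin U$ while $y\in U$, so $w'(\gamma)\neq e$. The element $\gamma$ depends only on $F$ (through $F_1$), not on $w$, which gives the more precise statement. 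MIF follows because $\Gamma\neq\{e\}$ and every non-trivial word has finitely many coefficients. The only genuinely delicate point is the normal-form reduction above, namely ensuring that the outer coefficients of the conjugated word are non-trivial and lie in the set to which transversal contractivity was applied.
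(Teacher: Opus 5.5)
Your ping-pong mechanism is fine, but the normalization step has a genuine gap, and it sits in the case you wave away.

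First, a point you already flagged: your initial $F_1$ is too small. The new leading coefficient $cg_kg_0$ lies in $F\cdot F\cdot F$ in general (e.g.\ $w=f_1xf_2xf_3$), so you do need all non-trivial products of at most three elements of $F\cup F^{-1}$. With that enlargement, the generic case of your argument is correct.

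The real problem is the degenerate case $F=\{(g_kg_0)^{-1}\}$, and it occurs for very natural words. Take $F=\{f\}$ with $f^2=e$ and $w=fxfx$. Then $g_kg_0=f$, the only choice is $c=f$, and $w'=xfxf$. This ``shorter word'' begins with a power of $x$, so the orbit of any $y\in U$ returns to $U$:
\begin{itemize}
\item $fy\notin U$;
\item $\gamma fy\in U$;
\item $f\gamma fy\notin U$;
\item $\gamma f\gamma fy\in U$.
\end{itemize}
There is no contradiction. Starting at some $p\notin U$ does not help either, unless also $fp\notin U$, which nothing guarantees. So ``argue with the shorter word'' is not yet an argument; a further idea is needed.

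The missing idea is the paper's normalization. Conjugate $w$ by a power of $x$, not by group elements. Use $x^{\pm1}$, chosen according to whether $w$ begins or ends with a coefficient or a power of $x$, so that the reduced word begins and ends with non-zero powers of $x$. For example, $x^{-1}wx=x^{-1}fxfx^{2}$ in the case above. This never merges coefficients, so they remain in $F$. You then apply transversal contractivity to $F$ itself, with no enlargement and no degenerate cases. Pick $p\in S\setminus U$, which exists because $F\neq\varnothing$ forces $U\neq S$. Reading right to left, each non-zero power of $\gamma$ sends the point into $U$ and each coefficient sends it out, so $w(\gamma)p\in U$ and $w(\gamma)\neq e$.

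Your dual scheme (both ends coefficients, start in $U$, end outside $U$) can also be repaired. Choose the conjugator $c\neq e$ outside $F$, depending only on $F$, with $cg_kg_0\neq e$ for all $g_0,g_k\in F\cup\{e\}$. This is possible because $\Gamma$ is infinite, as the contracting element $\gamma$ has infinite order. Then put $c^{-1}$ and these products into $F_1$. This costs strictly more bookkeeping than the $x$-conjugation and buys nothing extra.
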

	
	\begin{proof}
		Let $F\subset \Gamma\setminus\{e\}$ be finite and non-empty. Let $\Gamma\curvearrowright S$ be a transversely contractive action. Applying this property  with $n=1$ and with the
		finite set $F$, we obtain an element $\gamma\in\Gamma$ and disjoint subsets
		$U^+,U^-\subset S$ such that, writing $U=U^+\cup U^-$, we have
		\[
		\gamma(S\setminus U^-)\subset U^+
		\qquad\text{and}\qquad
		fU\cap U=\varnothing \quad\text{for every } f\in F.
		\]
		The first inclusion also implies
		\[
		\gamma^{-1}(S\setminus U^+)\subset U^-.
		\]

		Since $F$ is non-empty, the condition $fU\cap U=\varnothing$ implies that
		$U\neq S$. Fix $p\in S\setminus U$. We note that, for every non-zero integer
		$m$,
		\[
		\gamma^m(S\setminus U)\subset U^+
		\quad\text{if } m>0,
		\qquad\text{and}\qquad
		\gamma^m(S\setminus U)\subset U^-
		\quad\text{if } m<0.
		\]
		This follows from the two inclusions above and from the disjointness of
		$U^+$ and $U^-$. In particular, $\gamma^m(S\setminus U)\subset U$ for every
		$m\neq 0$, and hence $\gamma$ has infinite order.
		
		Let now $w\in \Gamma * \langle x\rangle$ be non-trivial, and assume that all
		coefficients of $w$ belong to $F$. If $w$ has no occurrence of $x$, then $w$
		is a non-trivial element of $\Gamma$, and there is nothing to prove. Thus we
		may assume that $w$ contains at least one occurrence of $x$.
		
		Conjugating $w$ inside $\Gamma * \langle x\rangle$ does not affect whether
		$w(\gamma)$ is trivial. We may therefore replace $w$ by a conjugate whose
		reduced normal form starts and ends with non-zero powers of $x$. Indeed, if
		the reduced normal form starts with $x^a$, $a\neq 0$, and ends with a
		coefficient, we conjugate by $x^{\operatorname{sgn}(a)}$. If it starts with a
		coefficient and ends with $x^b$, $b\neq 0$, we conjugate by
		$x^{-\operatorname{sgn}(b)}$. If it starts and ends with coefficients, we
		conjugate by $x^{-1}$. In each case, the resulting reduced word starts and
		ends with non-zero powers of $x$, and its coefficients still belong to $F$.
		
		Thus, after this replacement, we may write
		\[
		w=x^{m_0}f_1x^{m_1}f_2\cdots f_kx^{m_k},
		\]
		where $k\geq 0$, where $m_0,\ldots,m_k$ are non-zero integers, and where
		$f_1,\ldots,f_k\in F$.
		
		We evaluate $w(\gamma)$ at the point $p\in S\setminus U$, reading the word
		from right to left. The rightmost factor $\gamma^{m_k}$ sends $p$ into $U$.
		If $k\geq 1$, the next factor $f_k$ sends this point outside $U$, since
		$f_kU\cap U=\varnothing$. The next non-zero power of $\gamma$ sends it back
		into $U$. Continuing in this way, every coefficient $f_i$ sends the current
		point from $U$ to $S\setminus U$, and every non-zero power of $\gamma$ sends
		the current point from $S\setminus U$ back into $U$.
		
		Since the leftmost term of $w$ is a non-zero power of $x$, the final result
		lies in $U$. Hence $w(\gamma)p\in U$. But $p\in S\setminus U$, so
		$w(\gamma)p\neq p$, and therefore $w(\gamma)\neq e$.
	\end{proof}

	Corollaries~\ref{cor:A2-Pstar-full-limit} and~\ref{cor:C2-Pstar-full-limit} therefore yield a class of groups with the MIF property. Theorem~\ref{Thm:MIF} follows, since cocompact lattices are of general type and have full flag limit set by \cite[Proposition~6.1]{ciobotaru_le-bars26}.

    \subsection{Selfless reduced $C^*$-algebras}
transversal contractivity provides a single element with suitable contractive dynamics. The following lemma shows that this can be bootstrapped to arbitrarily many such elements whose associated dynamics are mutually transverse.

	\begin{lem}\label{lem:transversal-for-n=1}
Let \(\Gamma\) be an infinite group acting on a non-empty set \(S\). The
action is transversely contractive if and only if, for every finite set
\(F\subseteq\Gamma\setminus\{e\}\) and every \(n\in\mathbb N\), there exist
elements \(\gamma_1,\ldots,\gamma_n\in\Gamma\) and pairwise disjoint
subsets
\(
	U_1^+,U_1^-,\ldots,U_n^+,U_n^-\subseteq S
	\)
such that:
\begin{enumerate}
\item for every \(i=1,\ldots,n\),
\(
	\gamma_i(S\setminus U_i^-)\subseteq U_i^+;
	\)
\item for every \(f\in F\),
\(
	\left(\bigcup_{i=1}^n(U_i^+\cup U_i^-)\right)
	\cap
	f\left(\bigcup_{i=1}^n(U_i^+\cup U_i^-)\right)
	=\varnothing.
	\)
\end{enumerate}
\end{lem}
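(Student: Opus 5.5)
The backward implication is immediate: taking \(n=1\) in the statement recovers Definition~\ref{def:P-star-PHP} verbatim. The substance is the forward direction, which I would prove by induction on \(n\), bootstrapping one contracting element at a time by conjugating the single witness given by transversal contractivity.

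\textbf{Construction.} Fix \(F\) and \(n\). Since \(\Gamma\) is infinite, I would first choose elements \(a_1,\ldots,a_n\in\Gamma\) with the following property: every element of the finite set
\[
F' := \{\, a_i^{-1} f a_j \mid f\in F\cup\{e\},\ 1\le i,j\le n \,\}\setminus\{e\}
\]
is non-trivial. This requires \(a_i^{-1}fa_j\neq e\) whenever \((f,i,j)\neq(e,i,i)\), that is, \(a_j\neq f^{-1}a_i\) for \(i\neq j\) or \(f\neq e\). I would pick the \(a_j\) successively, with \(a_j\notin\{f^{\pm1}a_i : i<j,\ f\in F\cup\{e\}\}\) and \(a_j\notin F\cup F^{-1}\). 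This is a finite set of exclusions, so infinitude suffices; the diagonal terms \(a_i^{-1}fa_i\) with \(f\neq e\) are automatically non-trivial. Next, apply transversal contractivity to \(F'\) to obtain \(\gamma\) and disjoint sets \(U^\pm\) with \(\gamma(S\setminus U^-)\subseteq U^+\) and \(U\cap f'U=\varnothing\) for all \(f'\in F'\), where \(U=U^+\cup U^-\). Finally, set
\[
\gamma_i := a_i\gamma a_i^{-1},\qquad U_i^\pm := a_iU^\pm .
\]

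\textbf{Verification.} Condition (1) follows from \(\gamma_i(S\setminus a_iU^-)=a_i\gamma(S\setminus U^-)\subseteq a_iU^+\). For pairwise disjointness, the sets \(a_iU^+\) and \(a_iU^-\) are disjoint because \(U^+\) and \(U^-\) are. For \(i\neq j\), we have \(a_iU\cap a_jU=a_i\bigl(U\cap a_i^{-1}a_jU\bigr)=\varnothing\), since \(a_i^{-1}a_j\in F'\). Condition (2) reduces to \(a_iU\cap fa_jU=\varnothing\) for all \(f\in F\) and all \(i,j\). This equals \(a_i\bigl(U\cap a_i^{-1}fa_jU\bigr)\), which is empty because \(a_i^{-1}fa_j\in F'\).

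\textbf{Main obstacle.} The only delicate point is the choice of the \(a_i\), which must make every element of \(F'\) non-trivial. This is exactly where the hypothesis that \(\Gamma\) is infinite enters. No induction is actually needed: a single application of the property to the enlarged set \(F'\), followed by conjugation, handles all \(n\) at once.
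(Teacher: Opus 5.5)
Your proof is correct and is essentially the paper's argument: the paper likewise picks $h_1,\ldots,h_n$ (distinct, with $h_j^{-1}fh_i\neq e$), applies transversal contractivity once to the finite set $\{h_j^{-1}h_i\}_{i\neq j}\cup\{h_j^{-1}fh_i\}_{f\in F,\,i,j}$, and takes $\gamma_i=h_i\gamma h_i^{-1}$, $U_i^\pm=h_iU^\pm$. The only blemish is cosmetic. Defining $F'$ with ``$\setminus\{e\}$'' makes its non-triviality tautological, whereas what you actually need is that $a_i^{-1}a_j$ for $i\neq j$ and $a_i^{-1}fa_j$ for $f\in F$ are all $\neq e$, so that they really lie in $F'$; your successive choice of the $a_j$ guarantees this, and the extra exclusion $a_j\notin F\cup F^{-1}$ is superfluous.
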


\begin{proof}
The implication from the displayed condition to transversal contractivity
is immediate by taking \(n=1\).

Conversely, assume that the action is transversely contractive. Fix
\(n\in\mathbb N\) and a finite set
\(F\subseteq\Gamma\setminus\{e\}\). Since \(\Gamma\) is infinite, we may
choose distinct elements \(h_1,\ldots,h_n\in\Gamma\) such that
\(h_j^{-1}fh_i\neq e\) for all \(f\in F\) and all \(i,j\). This can be
done recursively, avoiding only finitely many forbidden values at each
step.

Set
\[
L_0:=
\{h_j^{-1}h_i\mid i\neq j\}
\cup
\{h_j^{-1}fh_i\mid f\in F,\ 1\leq i,j\leq n\}.
\]
By transversal contractivity, there exist \(\gamma\in\Gamma\) and
disjoint subsets \(U^+,U^-\subseteq S\) such that, writing
\(U:=U^+\cup U^-\), we have
\(\gamma(S\setminus U^-)\subseteq U^+\) and
\(\ell U\cap U=\varnothing\) for every \(\ell\in L_0\).

Define \(\gamma_i:=h_i\gamma h_i^{-1}\) and
\(U_i^\pm:=h_iU^\pm\). Then
\(\gamma_i(S\setminus U_i^-)\subseteq U_i^+\). If \(i\neq j\), the sets
\(U_i^\sigma\) and \(U_j^\tau\) are disjoint for all
\(\sigma,\tau\in\{+,-\}\), since
\((h_j^{-1}h_i)U\cap U=\varnothing\); for \(i=j\), their disjointness
follows from \(U^+\cap U^-=\varnothing\).

Finally, for \(f\in F\), we have
\(fU_i^\sigma\cap U_j^\tau=\varnothing\) for all \(i,j\) and
\(\sigma,\tau\in\{+,-\}\), since
\((h_j^{-1}fh_i)U\cap U=\varnothing\). Hence the union of all the
\(U_i^\pm\) is disjoint from each of its \(F\)-translates, as required.
\end{proof}

Having the preceding lemma in mind, selflessness follows from Ozawa's criterion.\footnote{An earlier version of \cite{Ozawa2025} contained a formulation of the criterion particularly close to transversal contractivity.}
\begin{prop}
If \(\Gamma\) is transversely contractive, then \(C_r^*(\Gamma)\) is  selfless.
\end{prop}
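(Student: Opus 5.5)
The plan is to verify Ozawa's criterion for selflessness of \(C_r^*(\Gamma)\) \cite{Ozawa2025}, using Lemma~\ref{lem:transversal-for-n=1} to supply the dynamical input. As I recall it, Ozawa's criterion (a variant of property \(P_{PHP}\)) asks the following. For every finite \(F\subseteq\Gamma\setminus\{e\}\) and every \(n\), one needs an action \(\Gamma\curvearrowright S\) (the left-regular one suffices, by Lemma~\ref{lem:pullback-left-regular-transversal}), elements \(\gamma_1,\dots,\gamma_n\), and pairwise disjoint sets \(U_i^\pm\). These must satisfy three conditions: each \(\gamma_i\) contracts the complement of \(U_i^-\) into \(U_i^+\), the union \(U=\bigcup_i(U_i^+\cup U_i^-)\) is disjoint from its \(F\)-translates, and the resulting "ping-pong plus Powers" configuration is strong enough to produce free, freely independent unitaries. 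Concretely, I would first state the criterion precisely in the form \(C_r^*(\Gamma)\) is selfless provided that, for all finite \(F\) and all \(n\), there is such a family. Then I would observe that this is verbatim the conclusion of Lemma~\ref{lem:transversal-for-n=1}.

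Next, the hypothesis of that lemma must be checked: \(\Gamma\) must be infinite. If \(\Gamma\) is trivial, then \(C_r^*(\Gamma)=\C\), and one either excludes this case or notes the convention. Otherwise, pick \(f\neq e\) and apply transversal contractivity to \(F=\{f\}\). The proof of Proposition~\ref{prop:trans-contr_mif} shows the resulting \(\gamma\) satisfies \(\gamma^m(S\setminus U)\subseteq U\) for all \(m\neq0\), where \(S\setminus U\neq\emptyset\). Hence \(\gamma\) has infinite order and \(\Gamma\) is infinite. Lemma~\ref{lem:transversal-for-n=1} then provides, for each \(F\) and \(n\), the family \(\gamma_i, U_i^\pm\). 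Pulled back to \(\Gamma\) as in Lemma~\ref{lem:pullback-left-regular-transversal}, these are subsets of \(\Gamma\), which is the setting where Ozawa's operator estimates take place on \(\ell^2(\Gamma)\).

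The main obstacle is matching the exact hypotheses of Ozawa's criterion. Depending on the version, one may need contraction for powers \(\gamma_i^m\) with \(m\neq0\), rather than just for \(\gamma_i\). That follows from \(\gamma_i(S\setminus U_i^-)\subseteq U_i^+\) together with disjointness, as in the proof of Proposition~\ref{prop:trans-contr_mif}. One may also need a Haagerup-type norm estimate showing that \(\lambda(\gamma_i)\) behave like free Haar unitaries relative to the elements supported on \(F\).

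If the criterion only provides a \(C^*\)-free family approximately, I would instead argue directly. The plan would be to use the projections \(P_{U_i^\pm}\) on \(\ell^2(\Gamma)\) and a ping-pong decomposition to bound \(\|\sum_f c_f\lambda(f)\,w(\lambda(\gamma_1),\dots)\|\). The target is the estimate required for the existential embedding \(C_r^*(\Gamma)\hookrightarrow C_r^*(\Gamma*\Z)\), which is Robert's definition of selflessness. In practice, I expect the first route to suffice, with the proof reducing to citing Ozawa's theorem once Lemma~\ref{lem:transversal-for-n=1} is in hand.
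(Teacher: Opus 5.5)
Your main route is the paper's: Lemma~\ref{lem:transversal-for-n=1} supplies, for every finite $F$ and every $n$, the family $\gamma_i, U_i^\pm$. Pulled back to $\Gamma$ along an orbit map $s\mapsto sx_0$, this family is exactly the data for Ozawa's property $P_{PHP}$, so \cite[Theorem~14]{Ozawa2025} applies directly and the fallback operator estimates are not needed. Your check that a non-trivial transversely contractive group is infinite, needed to invoke Lemma~\ref{lem:transversal-for-n=1}, fills in a step the paper uses without comment. So does your remark that the trivial group, which is vacuously transversely contractive, must be excluded.
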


\begin{proof}
By Lemma~\ref{lem:transversal-for-n=1}, for every finite set
\(F\subseteq\Gamma\setminus\{e\}\) and every \(n\in\mathbb N\), there exist
\(\gamma_1,\ldots,\gamma_n\in\Gamma\) and pairwise disjoint subsets
\(U_1^+,U_1^-,\ldots,U_n^+,U_n^-\) satisfying the contraction and
transversality conditions of Definition~\ref{def:P-star-PHP}. Thus, after
choosing \(x_0\in\Gamma\) and setting
\(C_i=D_i:=\{s\in\Gamma:sx_0\in U_i^+\}\), these data satisfy
\(P_{PHP}\) in the sense of \cite{Ozawa2025}. Hence
\(C_r^*(\Gamma)\) is completely selfless by \cite[Theorem~14]{Ozawa2025}.
\end{proof}

Corollaries~\ref{cor:A2-Pstar-full-limit} and~\ref{cor:C2-Pstar-full-limit} therefore yield a class of groups whose reduced $C^*$-algebras are selfless. Theorem~\ref{Thm:MIF} follows, since cocompact lattices are of general type and have full flag limit set by \cite[Proposition~6.1]{ciobotaru_le-bars26}.

	\small
	\bibliographystyle{alpha}
	\bibliography{Refs}

@article{Tomanov1985,
  author  = {Tomanov, G. M.},
  title   = {Generalized group identities in linear groups},
  journal = {Mathematics of the USSR-Sbornik},
  volume  = {51},
  number  = {1},
  pages   = {33--46},
  year    = {1985},
  doi     = {10.1070/SM1985v051n01ABEH002845}
}

@article{HullOsin2016,
  author  = {Hull, Michael and Osin, Denis},
  title   = {Transitivity degrees of countable groups and acylindrical hyperbolicity},
  journal = {Israel Journal of Mathematics},
  volume  = {216},
  number  = {1},
  pages   = {307--353},
  year    = {2016},
  doi     = {10.1007/s11856-016-1411-9}
}

@article{Jacobson2021,
  author  = {Jacobson, Bryan},
  title   = {A mixed identity-free elementary amenable group},
  journal = {Communications in Algebra},
  volume  = {49},
  number  = {1},
  pages   = {235--241},
  year    = {2021},
  doi     = {10.1080/00927872.2020.1797073}
}

@article{EtedadialiabadiGaoLeMaitreMelleray2021,
  author  = {Etedadialiabadi, Mahmood and Gao, Su and Le Ma{\^\i}tre, Fran{\c{c}}ois and Melleray, Julien},
  title   = {Dense locally finite subgroups of automorphism groups of ultraextensive spaces},
  journal = {Advances in Mathematics},
  volume  = {391},
  pages   = {107966},
  year    = {2021},
  doi     = {10.1016/j.aim.2021.107966}
}

@article{BodirskySchneiderThom2025,
  author  = {Bodirsky, Manuel and Schneider, Jakob and Thom, Andreas},
  title   = {Mixed identities for oligomorphic automorphism groups},
  journal = {The Journal of Symbolic Logic},
  year    = {2025},
  pages   = {1--23},
  doi     = {10.1017/jsl.2025.10123},
  note    = {Published online 19 August 2025}
}

@article{Bradford2024,
  author  = {Bradford, Henry},
  title   = {Quantifying lawlessness in finitely generated groups},
  journal = {Journal of Group Theory},
  volume  = {27},
  number  = {1},
  pages   = {31--59},
  year    = {2024},
  doi     = {10.1515/jgth-2022-0113}
}

@article{BradfordSchneiderThom2024,
  author  = {Bradford, Henry and Schneider, Jakob and Thom, Andreas},
  title   = {On the length of nonsolutions to equations with constants in some linear groups},
  journal = {Bulletin of the London Mathematical Society},
  volume  = {56},
  number  = {7},
  pages   = {2338--2349},
  year    = {2024},
  doi     = {10.1112/blms.13058}
}

@misc{AvniGelander2025,
  author        = {Avni, Nir and Gelander, Tsachik},
  title         = {Mixed identities in linear groups -- effective version},
  year          = {2025},
  eprint        = {2510.03492},
  archivePrefix = {arXiv},
  primaryClass  = {math.GR}
}

@article{BradfordSisto2026,
  author  = {Bradford, Henry and Sisto, Alessandro},
  title   = {Non-solutions to mixed equations in acylindrically hyperbolic groups coming from random walks},
  journal = {Archiv der Mathematik},
  volume  = {126},
  number  = {4},
  pages   = {343--350},
  year    = {2026},
  doi     = {10.1007/s00013-026-02223-4}
}

@misc{HydeLodha2025,
  author        = {Hyde, James and Lodha, Yash},
  title         = {Embeddings into highly transitive and mixed identity free groups},
  year          = {2025},
  eprint        = {2509.09788},
  archivePrefix = {arXiv},
  primaryClass  = {math.GR}
}

@misc{Rybak2026,
  author        = {Rybak, Ekaterina},
  title         = {Boundary dynamics, triple transitivity, and mixed identities in weakly hyperbolic groups},
  year          = {2026},
  eprint        = {2605.14159},
  archivePrefix = {arXiv},
  primaryClass  = {math.GR}
}

@article{Oppenheim2025,
	author  = {Oppenheim, Izhar},
	title   = {Property~{(T)} for Groups Acting on Affine Buildings},
	journal = {Bulletin of the London Mathematical Society},
	volume  = {57},
	number  = {10},
	pages   = {3151--3162},
	year    = {2025},
	doi     = {10.1112/blms.70148}
}

@article{KleinerLeeb1997,
	author  = {Kleiner, Bruce and Leeb, Bernhard},
	title   = {Rigidity of Quasi-Isometries for Symmetric Spaces and
	{E}uclidean Buildings},
	journal = {Publications Math\'ematiques de l'IH\'ES},
	volume  = {86},
	pages   = {115--197},
	year    = {1997},
	doi     = {10.1007/BF02698902}
}

@misc{TitzMiteWitzel2025,
	author = {Thomas Titz Mite and Stefan Witzel},
	title = {Non-residually finite {$\tilde{C}_2$}-lattices},
	year = {2025},
	howpublished = {Preprint, {arXiv}:2509.05054},
	url = {https://arxiv.org/abs/2509.05054},
	arXiv = {arXiv:2509.05054}
}

@misc{lecureux_witzel26,
	author = {Jean L{\'e}cureux and Stefan Witzel},
	title = {The {Normal} {Subgroup} {Theorem} for lattices on two-dimensional {Euclidean} buildings},
	year = {2026},
	howpublished = {Preprint, {arXiv}:2605.06163},
	url = {https://arxiv.org/abs/2605.06163},
	arXiv = {arXiv:2605.06163}
}

@misc{Ozawa2025,
	author = {Narutaka Ozawa},
	title = {Proximality and selflessness for group {C}*-algebras},
	year = {2026},
	howpublished = {Preprint, {arXiv}:2508.07938},
	url = {https://arxiv.org/abs/2508.07938},
	arXiv = {arXiv:2508.07938}
}

@incollection{deLaHarpe1985,
	author    = {de la Harpe, Pierre},
	title     = {Reduced {$C^*$}-Algebras of Discrete Groups Which Are
	Simple with a Unique Trace},
	booktitle = {Operator Algebras and Their Connections with Topology
	and Ergodic Theory},
	series    = {Lecture Notes in Mathematics},
	volume    = {1132},
	pages     = {230--253},
	publisher = {Springer-Verlag},
	address   = {Berlin},
	year      = {1985}
}

@incollection {parreau00,
	AUTHOR = {Parreau, Anne},
	TITLE = {Immeubles affines: construction par les normes et \'{e}tude des
	isom\'{e}tries},
	BOOKTITLE = {Crystallographic groups and their generalizations ({K}ortrijk,
	1999)},
	SERIES = {Contemp. Math.},
	VOLUME = {262},
	PAGES = {263--302},
	PUBLISHER = {Amer. Math. Soc., Providence, RI},
	YEAR = {2000},
	MRCLASS = {20E42 (20G25 51E24 53C23)},
	MRNUMBER = {1796138},
	MRREVIEWER = {Guy Rousseau},
	DOI = {10.1090/conm/262/04180},
	URL = {https://doi-org.ezproxy.universite-paris-saclay.fr/10.1090/conm/262/04180},
}

@article {robert25,
	AUTHOR = {Robert, Leonel},
	TITLE = {Selfless {${\rm C}^*$}-algebras},
	JOURNAL = {Adv. Math.},
	FJOURNAL = {Advances in Mathematics},
	VOLUME = {478},
	YEAR = {2025},
	PAGES = {Paper No. 110409, 28},
	ISSN = {0001-8708,1090-2082},
	MRCLASS = {46L05 (46L35 46L54)},
	MRNUMBER = {4924062},
	MRREVIEWER = {Bipul\ Saurabh},
	DOI = {10.1016/j.aim.2025.110409},
	URL = {https://doi-org.ezproxy.weizmann.ac.il/10.1016/j.aim.2025.110409},
}

@article{schafhauser2025nuclear,
  title={Nuclear {$C^*$}-algebras: 99 problems},
  author={Schafhauser, Christopher and Tikuisis, Aaron and White, Stuart},
  journal={arXiv preprint arXiv:2506.10902},
  year={2025}
}

@misc{BFFHZ26,
	author        = {Belk, James and Fournier-Facio, Francesco and
	Hyde, James and Zaremsky, Matthew C. B.},
	title         = {Boone--Higman Embeddings of
	{$\operatorname{Aut}(F_n)$} and Mapping Class Groups
	of Punctured Surfaces},
	year          = {2026},
	eprint        = {2503.21882},
	archivePrefix = {arXiv},
	primaryClass  = {math.GR},
	note          = {arXiv:2503.21882, Revised July 2026}
}

@article{Vigdorovich2026Linear,
	author = {Vigdorovich, Itamar},
	title = {Selfless reduced ${C^{*}}$-algebras of linear groups},
	journal = {Proceedings of the London Mathematical Society},
	volume = {133},
	number = {1},
	pages = {e70180},
	doi = {https://doi.org/10.1112/plms.70180},
	url = {https://londmathsoc.onlinelibrary.wiley.com/doi/abs/10.1112/plms.70180},
	eprint = {https://londmathsoc.onlinelibrary.wiley.com/doi/pdf/10.1112/plms.70180},
	year = {2026}
}

@misc{GaoKunnawalkamPatchellTeryoshin2026,
  author        = {Gao, David and
                   Kunnawalkam Elayavalli, Srivatsav and
                   Patchell, Gregory and
                   Teryoshin, Lizzy},
  title         = {Selfless reduced amalgamated free products and
                   {HNN} extensions},
  year          = {2026},
  eprint        = {2604.06982},
  archivePrefix = {arXiv},
  primaryClass  = {math.OA}
}

@misc{ArzhantsevaFinnSell2026,
  author        = {Arzhantseva, Goulnara and Finn-Sell, Martin},
  title         = {Lacunary hyperbolic groups with fast injectivity radius
                   growth and enough loxodromic elements are selfless},
  year          = {2026},
  eprint        = {2606.20456},
  archivePrefix = {arXiv},
  primaryClass  = {math.GR}
}

@misc{BasuFlores2026,
  author        = {Basu, Aaratrick and Flores, Felipe},
  title         = {Selfless inclusions arising from commensurator groups
                   of hyperbolic groups},
  year          = {2026},
  eprint        = {2605.12737},
  archivePrefix = {arXiv},
  primaryClass  = {math.GR}
}

@misc{Yang2025,
  author        = {Yang, Wenyuan},
  title         = {An extreme boundary of acylindrically hyperbolic groups},
  year          = {2025},
  eprint        = {2511.16400},
  archivePrefix = {arXiv},
  primaryClass  = {math.GR}
}

@misc{KunnawalkamPatchellTeryoshin2025,
  author        = {Kunnawalkam Elayavalli, Srivatsav and
                   Patchell, Gregory and
                   Teryoshin, Lizzy},
  title         = {Some remarks on decay in countable groups and
                   amalgamated free products},
  year          = {2025},
  eprint        = {2509.08754},
  archivePrefix = {arXiv},
  primaryClass  = {math.GR}
}

@article {benoist97,
	AUTHOR = {Benoist, Y.},
	TITLE = {Propri\'et\'es asymptotiques des groupes lin\'eaires},
	JOURNAL = {Geom. Funct. Anal.},
	FJOURNAL = {Geometric and Functional Analysis},
	VOLUME = {7},
	YEAR = {1997},
	NUMBER = {1},
	PAGES = {1--47},
	ISSN = {1016-443X,1420-8970},
	MRCLASS = {22E15},
	MRNUMBER = {1437472},
	MRREVIEWER = {Scot\ Adams},
	DOI = {10.1007/PL00001613},
	URL = {https://doi.org/10.1007/PL00001613},
}

@article {ciobotaru_muhlerr_rousseau_20,
	AUTHOR = {Ciobotaru, Corina and M\"{u}hlherr, Bernhard and Rousseau, Guy},
	TITLE = {The cone topology on masures},
	NOTE = {With an appendix by Auguste H\'{e}bert},
	JOURNAL = {Adv. Geom.},
	FJOURNAL = {Advances in Geometry},
	VOLUME = {20},
	YEAR = {2020},
	NUMBER = {1},
	PAGES = {1--28},
	ISSN = {1615-715X},
	MRCLASS = {20E42 (20G44 51E24)},
	MRNUMBER = {4052945},
	MRREVIEWER = {Walter D. Freyn},
	DOI = {10.1515/advgeom-2019-0020},
	URL = {https://doi-org.ezproxy.universite-paris-saclay.fr/10.1515/advgeom-2019-0020},
}

@book {bridson_haefliger99,
	AUTHOR = {Bridson, Martin R. and Haefliger, Andr\'{e}},
	TITLE = {Metric spaces of non-positive curvature},
	SERIES = {Grundlehren der mathematischen Wissenschaften [Fundamental
	Principles of Mathematical Sciences]},
	VOLUME = {319},
	PUBLISHER = {Springer-Verlag, Berlin},
	YEAR = {1999},
	ISBN = {3-540-64324-9},
	MRCLASS = {53C23 (20F65 53C70 57M07)},
	MRNUMBER = {1744486},
	MRREVIEWER = {Athanase Papadopoulos},
	DOI = {10.1007/978-3-662-12494-9},
	URL = {https://doi.org/10.1007/978-3-662-12494-9},
}

@book{kramer90,
	author = {Kramer, Linus},
	title = {Compact polygons},
	year = {1994},
	publisher = {T{\"u}bingen: Math. Fak., Univ. T{\"u}bingen},
	language = {English},
	zbMATH = {815387},
	Zbl = {0844.51006}
}

@article {kapovich_leeb_porti18morse,
	AUTHOR = {Kapovich, Michael and Leeb, Bernhard and Porti, Joan},
	TITLE = {A {M}orse lemma for quasigeodesics in symmetric spaces and
	{E}uclidean buildings},
	JOURNAL = {Geom. Topol.},
	FJOURNAL = {Geometry \& Topology},
	VOLUME = {22},
	YEAR = {2018},
	NUMBER = {7},
	PAGES = {3827--3923},
	ISSN = {1465-3060,1364-0380},
	MRCLASS = {53C23 (20E42 20F65 51E24)},
	MRNUMBER = {3890767},
	MRREVIEWER = {Guy\ Rousseau},
	DOI = {10.2140/gt.2018.22.3827},
	URL = {https://doi.org/10.2140/gt.2018.22.3827},
}

@article {kapovich_leeb_porti17anosov,
	AUTHOR = {Kapovich, Michael and Leeb, Bernhard and Porti, Joan},
	TITLE = {Anosov subgroups: dynamical and geometric characterizations},
	JOURNAL = {Eur. J. Math.},
	FJOURNAL = {European Journal of Mathematics},
	VOLUME = {3},
	YEAR = {2017},
	NUMBER = {4},
	PAGES = {808--898},
	ISSN = {2199-675X,2199-6768},
	MRCLASS = {22E40 (20F65 53C35)},
	MRNUMBER = {3736790},
	MRREVIEWER = {Herbert\ Abels},
	DOI = {10.1007/s40879-017-0192-y},
	URL = {https://doi.org/10.1007/s40879-017-0192-y},
}

@article {kramer02,
    AUTHOR = {Kramer, Linus},
     TITLE = {Loop groups and twin buildings},
      NOTE = {Dedicated to John Stallings on the occasion of his 65th
              birthday},
   JOURNAL = {Geom. Dedicata},
  FJOURNAL = {Geometriae Dedicata},
    VOLUME = {92},
      YEAR = {2002},
     PAGES = {145--178},
      ISSN = {0046-5755,1572-9168},
   MRCLASS = {20E42 (51E24 51H15 53C42)},
  MRNUMBER = {1934016},
MRREVIEWER = {Guy\ Rousseau},
       DOI = {10.1023/A:1019603827308},
       URL = {https://doi.org/10.1023/A:1019603827308},
}

@book {rousseau23,
	AUTHOR = {Rousseau, Guy},
	TITLE = {Euclidean buildings---geometry and group actions},
	SERIES = {EMS Tracts in Mathematics},
	VOLUME = {35},
	PUBLISHER = {EMS Press, Berlin},
	YEAR = {[2023] \copyright 2023},
	PAGES = {x+597},
	ISBN = {978-3-98547-039-6; 978-3-98547-539-1},
	MRCLASS = {51E24 (20C08 20E42 20F55)},
	MRNUMBER = {4632266},
	DOI = {10.4171/etm/35},
	URL = {https://doi.org/10.4171/etm/35},
}

@article {amrutam_gao_kunnawalkam-elayavalli_patchell25,
	AUTHOR = {Amrutam, Tattwamasi and Gao, David and Kunnawalkam Elayavalli,
	Srivatsav and Patchell, Gregory},
	TITLE = {Strict comparison in reduced group {$C^*$}-algebras},
	JOURNAL = {Invent. Math.},
	FJOURNAL = {Inventiones Mathematicae},
	VOLUME = {242},
	YEAR = {2025},
	NUMBER = {3},
	PAGES = {639--657},
	ISSN = {0020-9910,1432-1297},
	MRCLASS = {46L05 (46L35)},
	MRNUMBER = {4978176},
	MRREVIEWER = {Qingzhai\ Fan},
	DOI = {10.1007/s00222-025-01366-5},
	URL = {https://doi-org.ezproxy.weizmann.ac.il/10.1007/s00222-025-01366-5},
}

@misc{vigdorovich25,
	author = {Itamar Vigdorovich},
	title = {Structural properties of reduced {$C^*$}-algebras associated with higher-rank lattices},
	year = {2025},
	howpublished = {Preprint, {arXiv}:2503.12737},
	url = {https://arxiv.org/abs/2503.12737},
	arXiv = {arXiv:2503.12737}
}

@article {parkinson_van-maldeghem24,
	AUTHOR = {Parkinson, James and Van Maldeghem, Hendrik},
	TITLE = {Automorphisms and opposition in spherical buildings of
	classical type},
	JOURNAL = {Adv. Geom.},
	FJOURNAL = {Advances in Geometry},
	VOLUME = {24},
	YEAR = {2024},
	NUMBER = {3},
	PAGES = {287--321},
	ISSN = {1615-715X,1615-7168},
	MRCLASS = {20E42 (20E45 51A50 51E24)},
	MRNUMBER = {4781463},
	MRREVIEWER = {Theo\ Grundh\"ofer},
	DOI = {10.1515/advgeom-2024-0012},
	URL = {https://doi-org.ezproxy.weizmann.ac.il/10.1515/advgeom-2024-0012},
}

@article {parkinson_van-maldeghem19,
	AUTHOR = {Parkinson, James and Van Maldeghem, Hendrik},
	TITLE = {Opposition diagrams for automorphisms of large spherical
	buildings},
	JOURNAL = {J. Combin. Theory Ser. A},
	FJOURNAL = {Journal of Combinatorial Theory. Series A},
	VOLUME = {162},
	YEAR = {2019},
	PAGES = {118--166},
	ISSN = {0097-3165,1096-0899},
	MRCLASS = {51E24 (20E42)},
	MRNUMBER = {3873873},
	MRREVIEWER = {Rupert\ McCallum},
	DOI = {10.1016/j.jcta.2018.09.011},
	URL = {https://doi-org.ezproxy.weizmann.ac.il/10.1016/j.jcta.2018.09.011},
}

@article {bader_caprace_lecureux19,
	AUTHOR = {Bader, Uri and Caprace, Pierre-Emmanuel and L\'{e}cureux, Jean},
	TITLE = {On the linearity of lattices in affine buildings and
	ergodicity of the singular {C}artan flow},
	JOURNAL = {J. Amer. Math. Soc.},
	FJOURNAL = {Journal of the American Mathematical Society},
	VOLUME = {32},
	YEAR = {2019},
	NUMBER = {2},
	PAGES = {491--562},
	ISSN = {0894-0347},
	MRCLASS = {20E42 (20C99 20E08 20F65 22D40 22E40 22F50 51E24)},
	MRNUMBER = {3904159},
	MRREVIEWER = {Herbert Abels},
	DOI = {10.1090/jams/914},
	URL = {https://doi-org.ezproxy.universite-paris-saclay.fr/10.1090/jams/914},
}

@article{`2023selfless,
	title={Selfless {$C^*$}-algebras},
	author={Robert, Leonel},
	journal={arXiv preprint arXiv:2309.14188},
	year={2023}
}

@article{CL,
	title={Stability of Homomorphisms, Coverings and Cocycles II: Examples, Applications and Open problems},
	author={Chapman, M. and Lubotzky, A.},
	journal={arXiv preprint arXiv:2311.06706},
	year={2023}
}

@article {caprace_ciobotaru15,
	AUTHOR = {Caprace, Pierre-Emmanuel and Ciobotaru, Corina},
	TITLE = {Gelfand pairs and strong transitivity for {E}uclidean
	buildings},
	JOURNAL = {Ergodic Theory Dynam. Systems},
	FJOURNAL = {Ergodic Theory and Dynamical Systems},
	VOLUME = {35},
	YEAR = {2015},
	NUMBER = {4},
	PAGES = {1056--1078},
	ISSN = {0143-3857,1469-4417},
	MRCLASS = {51E24 (22D15)},
	MRNUMBER = {3345164},
	MRREVIEWER = {Linus\ K. H. Kramer},
	DOI = {10.1017/etds.2013.102},
	URL = {https://doi.org/10.1017/etds.2013.102},
}

@article {parkinson_temmermans_van-maldeghem15,
	AUTHOR = {Parkinson, James and Temmermans, Beukje and Van Maldeghem,
	Hendrik},
	TITLE = {The combinatorics of automorphisms and opposition in
	generalised polygons},
	JOURNAL = {Ann. Comb.},
	FJOURNAL = {Annals of Combinatorics},
	VOLUME = {19},
	YEAR = {2015},
	NUMBER = {3},
	PAGES = {567--619},
	ISSN = {0218-0006,0219-3094},
	MRCLASS = {51E12 (05B25)},
	MRNUMBER = {3395495},
	MRREVIEWER = {Paul-Hermann\ Zieschang},
	DOI = {10.1007/s00026-015-0277-6},
	URL = {https://doi-org.ezproxy.weizmann.ac.il/10.1007/s00026-015-0277-6},
}

@book {abramenko_brown08,
	AUTHOR = {Abramenko, Peter and Brown, Kenneth S.},
	TITLE = {Buildings},
	SERIES = {Graduate Texts in Mathematics},
	VOLUME = {248},
	NOTE = {Theory and applications},
	PUBLISHER = {Springer, New York},
	YEAR = {2008},
	PAGES = {xxii+747},
	ISBN = {978-0-387-78834-0},
	MRCLASS = {20E42 (20F55 20J06 51E24 51F15)},
	MRNUMBER = {2439729},
	MRREVIEWER = {Ralf Koehl},
	DOI = {10.1007/978-0-387-78835-7},
	URL = {https://doi-org.ezproxy.universite-paris-saclay.fr/10.1007/978-0-387-78835-7},
}

@book{leeb00,
	AUTHOR = {Leeb, Bernhard},
	TITLE = {A characterization of irreducible symmetric spaces and
	{E}uclidean buildings of higher rank by their asymptotic
	geometry},
	SERIES = {Bonner Mathematische Schriften [Bonn Mathematical
	Publications]},
	VOLUME = {326},
	PUBLISHER = {Universit\"at Bonn, Mathematisches Institut, Bonn},
	YEAR = {2000},
	PAGES = {ii+42},
	MRCLASS = {53C24 (51E24)},
	MRNUMBER = {1934160},
	MRREVIEWER = {Ralf\ J.\ Spatzier},
}

@misc{ciobotaru_le-bars26,
	author = {Corina Ciobotaru and Corentin Le Bars},
	title = {Dynamical boundaries of affine buildings: {C}*-simplicity and {Poisson} boundaries},
	year = {2026},
	howpublished = {Preprint, {arXiv}:2601.13092},
	url = {https://arxiv.org/abs/2601.13092},
	arXiv = {arXiv:2601.13092}
}

@incollection {kapovich_leeb18,
	AUTHOR = {Kapovich, Michael and Leeb, Bernhard},
	TITLE = {Discrete isometry groups of symmetric spaces},
	BOOKTITLE = {Handbook of group actions. {V}ol. {IV}},
	SERIES = {Adv. Lect. Math. (ALM)},
	VOLUME = {41},
	PAGES = {191--290},
	PUBLISHER = {Int. Press, Somerville, MA},
	YEAR = {2018},
	ISBN = {978-1-57146-365-4},
	MRCLASS = {22E40 (20E42 20F65 53C35)},
	MRNUMBER = {3888689},
	MRREVIEWER = {Jean\ Raimbault},
}

@article{GrundhoferKramerVanMaldeghemWeiss2012,
	author = {Grundh{\"o}fer, Theo and Kramer, Linus and Van Maldeghem, Hendrik and Weiss, Richard M.},
	title = {Compact totally disconnected {Moufang} buildings},
	journal = {Tohoku Mathematical Journal},
	volume = {64},
	number = {3},
	pages = {333--360},
	year = {2012},
	doi = {10.2748/tmj/1348863660}
}

@article{temmermans_thas_vanmaldeghem12,
	author = {Temmermans, Beukje and Thas, Joseph A. and Van Maldeghem, Hendrik},
	title = {Domesticity in generalized quadrangles},
	journal = {Annals of Combinatorics},
	volume = {16},
	number = {4},
	pages = {905--916},
	year = {2012},
	doi = {10.1007/s00026-012-0145-6}
}

@incollection {ronan86,
	AUTHOR = {Ronan, Mark},
	TITLE = {A construction of buildings with no rank {$3$} residues of
	spherical type},
	BOOKTITLE = {Buildings and the geometry of diagrams ({C}omo, 1984)},
	SERIES = {Lecture Notes in Math.},
	VOLUME = {1181},
	PAGES = {242--248},
	PUBLISHER = {Springer, Berlin},
	YEAR = {1986},
	ISBN = {3-540-16466-9},
	MRCLASS = {51D20 (51E25)},
	MRNUMBER = {843395},
	MRREVIEWER = {S.\ V.\ Tsaranov},
	DOI = {10.1007/BFb0075518},
	URL = {https://doi.org/10.1007/BFb0075518},
}

@article{flores2025selfless,
  title = {Selfless reduced free products and graph products of {$C^*$}-algebras},
  author = {Flores, Felipe and Klisse, Mario and Cobhthaigh, M{\'\i}che{\'a}l {\'O} and Pagliero, Matteo},
  journal = {arXiv preprint arXiv:2510.24675},
  year = {2025},
}

	\vspace{0.5cm}

	\noindent{\textsc{D\'epartement de Math\'ematiques et Applications, \'Ecole Normale Sup\'erieure - PSL, 45 rue d'Ulm, 75005 Paris, France}}
	\vspace{0.2cm}
	
	\noindent{\textit{Email address:} \texttt{corentin.le.bars@ens.psl.eu}} \\

	\noindent{\textsc{Department of Mathematics, Computer Science and Statistics, Ghent University, Krijgslaan 281-S9, 9000 Gent, Belgium}}
	\vspace{0.2cm}
	
	\noindent{\textit{Email address:} \texttt{elyasheev.leibtag@gent.be}} \\

	\noindent{\textsc{Mathematical Institute, University of Oxford, Radcliffe Observatory, Andrew Wiles Building, Woodstock Rd, Oxford OX2 6GG, United Kingdom}}
	\vspace{0.2cm}
	
	\noindent{\textit{Email address:} \texttt{itamar.vigdorovich@maths.ox.ac.uk}} \\

\end{document}